\definecolor{litegray}{RGB}{192,192,192}
\tikzstyle{vertex}=[circle, draw, fill=white, inner sep=0pt, minimum width=4pt]
\tikzstyle{root-vertex}=[circle, draw, fill=darkgray, inner sep=0pt, minimum width=4pt]
\tikzstyle{circ}=[circle, draw, fill=white, inner sep=1pt]
\tikzstyle{root}=[circle, draw, fill=darkgray, inner sep=1pt, text=white, minimum width=12pt]
\tikzstyle{braket}=[decorate,decoration={brace,amplitude=10pt},xshift=0pt,yshift=-10pt, black]
\title{Beyond the classification theorem of\\Cameron, Goethals, Seidel, and Shult}
\author{
    Hricha Acharya\thanks{School of Mathematical and Statistical Sciences, Arizona State University, Tempe, AZ 85281, USA. Email: {\tt hachary3@asu.edu}.}
    \and Zilin Jiang\thanks{School of Mathematical and Statistical Sciences, and School of Computing and Augmented Intelligence, Arizona State University, Tempe, AZ 85281, USA. Email: {\tt zilinj@asu.edu}. Supported in part by U.S.\ taxpayers through NSF grant DMS-2127650.}
}
\date{}
\newtheorem{theorem}{Theorem}[section]
\newtheorem{corollary}[theorem]{Corollary}
\newtheorem{lemma}[theorem]{Lemma}
\newtheorem{proposition}[theorem]{Proposition}
\newtheorem{problem}[theorem]{Problem}
\theoremstyle{definition}
\newtheorem{definition}[theorem]{Definition}
\theoremstyle{remark}
\newtheorem*{remark}{Remark}
\newtheorem*{example}{Example}
\newtheorem*{claim*}{Claim}
\newenvironment{claimproof}[1][Proof]{\begin{proof}[#1]}{\end{proof}}
\newcommand{\computer}{\tikz[scale=0.28, baseline=0.1ex, transform shape]{\draw[rounded corners=0.02] (0.15, 0.45) rectangle (0.85, 0.85) {};\draw[rounded corners=0.02] (0, 0.3) rectangle (1, 1) {};\draw[rounded corners=0.02] (0, 0) rectangle (1, 0.2) {};\draw (0.2,0.1) -- (0.3,0.1);\draw (0.5,0.1) -- (0.8,0.1);}}
\newenvironment{cproof}[1][Proof]{\begin{proof}[#1]}{\end{proof}}
\DeclarePairedDelimiter\abs{\lvert}{\rvert}
\newcommand{\dset}[2]{\left\{{#1}\colon{#2}\right\}}
\newcommand{\sset}[1]{\left\{{#1}\right\}}
\newcommand{\ga}{\gamma}
\newcommand{\la}{\lambda}
\newcommand{\las}{\lambda^*}
\newcommand{\D}{\mathcal{D}}
\newcommand{\F}{\mathcal{F}}
\newcommand{\G}{\mathcal{G}}
\newcommand{\HH}{\mathcal{H}}
\newcommand{\N}{\mathbb{N}}
\newcommand{\R}{\mathbb{R}}
\newcommand{\X}{\mathcal{X}}
\newcommand{\bx}{\bm{x}}
\newcommand{\tbx}{\tilde{\bm{x}}}
\newcommand{\T}{\intercal}
\newcommand{\etwo}{\tikz[baseline=-0.2ex, scale=0.9]{\draw[darkgray, thick] (0,0)--(0.2,0); \draw[darkgray, fill=darkgray] (0,0) circle (0.05); \draw[darkgray, fill=darkgray] (0.1,0.17320508075) circle (0.05); \draw[darkgray, fill=white] (0.2,0) circle (0.05);}}
\newcommand{\ftwo}{\tikz[baseline=-0.2ex, scale=0.9]{\draw[darkgray, thick] (0,0)--(0.2,0); \draw[darkgray, fill=darkgray] (0,0) circle (0.05); \draw[darkgray, fill=darkgray] (0.1,0.17320508075) circle (0.05); \draw[darkgray, fill=darkgray] (0.2,0) circle (0.05);}}
\newcommand{\stwo}{\tikz[baseline=-0.2ex, scale=0.9]{\draw[darkgray, thick] (0,0)--(0.1,0.17320508075)--(0.2,0); \draw[darkgray, fill=white] (0,0) circle (0.05); \draw[darkgray, fill=darkgray] (0.1,0.17320508075) circle (0.05); \draw[darkgray, fill=white] (0.2,0) circle (0.05);}}
\newcommand{\ktwo}{\tikz[baseline=-0.2ex, scale=0.9]{\draw[darkgray, thick] (0,0)--(0,0.17320508075); \draw[darkgray, fill=darkgray] (0,0) circle (0.05); \draw[darkgray, fill=darkgray] (0,0.17320508075) circle (0.05);}}
\newcommand{\ktwoc}{\tikz[baseline=-0.2ex, scale=0.9]{\draw[darkgray, fill=darkgray] (0,0) circle (0.05); \draw[darkgray, fill=darkgray] (0,0.17320508075) circle (0.05);}}
\newcommand{\ape}[1]{(#1,\etwo)}
\newcommand{\tpe}[1]{(#1,\ftwo)}
\newcommand{\apesub}[1]{(#1,{\tikz[baseline=-0.2ex, scale=0.63]{\draw[darkgray, thick] (0,0)--(0.2,0); \draw[darkgray, fill=darkgray] (0,0) circle (0.05); \draw[darkgray, fill=darkgray] (0.1,0.17320508075) circle (0.05); \draw[darkgray, fill=white] (0.2,0) circle (0.05);}})}
\newcommand{\tpesub}[1]{(#1,{\tikz[baseline=-0.2ex, scale=0.63]{\draw[darkgray, thick] (0,0)--(0.2,0); \draw[darkgray, fill=darkgray] (0,0) circle (0.05); \draw[darkgray, fill=darkgray] (0.1,0.17320508075) circle (0.05); \draw[darkgray, fill=darkgray] (0.2,0) circle (0.05);}})}
\newcommand{\etwographsub}{\tikz[baseline=-0.2ex, scale=0.63]{\draw[darkgray, thick] (0,0)--(0.2,0); \draw[darkgray, fill=white] (0,0) circle (0.05); \draw[darkgray, fill=white] (0.1,0.17320508075) circle (0.05); \draw[darkgray, fill=white] (0.2,0) circle (0.05);}}
\begin{document}

\maketitle

\begin{abstract}
	In 1976, Cameron, Goethals, Seidel, and Shult classified all the graphs whose smallest eigenvalue is at least $-2$ by relating such graphs to root systems that appear in the classification of semisimple Lie algebras. In this paper, extending their beautiful theorem, we give a complete classification of all connected graphs whose smallest eigenvalue lies in $(-\lambda^*, -2)$, where $\lambda^* = \rho^{1/2} + \rho^{-1/2} \approx 2.01980$, and $\rho$ is the unique real root of $x^3 = x + 1$. Our result is the first classification of infinitely many connected graphs with their smallest eigenvalue in $(-\lambda, -2)$ for any constant $\lambda > 2$.
\end{abstract}

\noindent\textbf{Keywords:} Smallest eigenvalue; graph classification

\noindent\textbf{Mathematics Subject Classification (2020):} Primary 05C50; Secondary 05C76, 05C30.

\section{Introduction} \label{sec:intro}

A core problem in spectral graph theory is the characterization of graphs with restricted eigenvalues. In this paper, by eigenvalues of a graph $G$, we mean those associated with its adjacency matrix $A_G$. We focus on graphs with eigenvalues bounded from below, and we denote by $\G(\la)$ the family of graphs with smallest eigenvalue at least $-\la$. Because $\G(\la)$ is closed under disjoint union, it is enough to characterize the connected graphs in $\G(\la)$.

The well-known fact that all line graphs have smallest eigenvalue at least $-2$ prompted a great deal of interest in the characterization of graphs in $\G(2)$. After Hoffman~\cite{H69} constructed generalized line graphs, the interest deepened as it became apparent that generalized line graphs are not the only graphs in $\G(2)$. The ubiquitous Petersen graph, the Shrikhande graph, the Clebsch graph, the Schl\"afli graph, and the three Chang graphs were among the first exceptional graphs to be identified. An important result in this direction is the complete enumeration of strongly regular graphs in $\G(2)$ by Seidel~\cite{S73}.

In 1976, the characterization of graphs in $\G(2)$ culminated in a beautiful theorem of Cameron, Goethals, Seidel, and Shult~\cite{CGSS76}. The ingenuity of their approach lies in translating the spectral graph-theoretic problem into one about root systems, which are used in the classification of semisimple Lie algebras. They proved that, apart from the generalized line graphs, there exist only finitely many other connected graphs in $\G(2)$, all having at most $36$ vertices. More precisely, every connected graph in $\G(2)$ can be represented by a root system $D_n$ (for some $n$) or by the root system $E_8$. For a comprehensive account of this theory and related developments, we refer the reader to the monograph \cite{CRS04}.

Can we classify graphs with smallest eigenvalue beyond $-2$? In 1992, Bussemaker and Neumaier~\cite[Theorem~2.5]{BN92} identified the smallest $\la > 2$ for which $\G(\la)\setminus \G(2)$ contains precisely one connected graph, namely, the graph $E_{10}$ defined in \cref{fig:e2n}. More recently, motivated by a discrete-geometric question on spherical two-distance sets \cite{JTYZZ23}, Jiang and Polyanskii proved \cite[Theorem~2.10]{JP25} that the number of connected graphs in $\G(\la) \setminus \G(2)$ is finite for every $\la \in (2, \las)$. Here and throughout, \[\las := \rho^{1/2}+\rho^{-1/2} \approx 2.0198008871,\] where $\rho$ is the unique real root of $x^3 = x + 1$. Despite its algebraic definition, the peculiar constant $\las$ admits a natural interpretation within spectral graph theory.

\begin{proposition}[Hoffman~\cite{H72}] \label{lem:e2n}
    For every $n \in \N$ with $n \ge 4$, define the graph $E_n$ as in \cref{fig:e2n}. As $n \to \infty$, the largest eigenvalue of $E_n$ increases to $\las$, or equivalently, the smallest eigenvalue of $E_n$ decreases to $-\las$.\qed
\end{proposition}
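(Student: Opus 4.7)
The plan is to combine a standard Perron--Frobenius monotonicity argument with an explicit analysis of the eigenvalue equation on the limiting semi-infinite tree. Since every tree is bipartite, the spectrum of $E_n$ is symmetric about $0$, so it suffices to prove that $\lambda_n := \lambda_{\max}(E_n)$ increases to $\las$.

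Monotonicity is immediate: $E_{n+1}$ is obtained from $E_n$ by appending a single pendant vertex to the far end of the long arm, so $E_n$ is a proper connected subgraph of $E_{n+1}$, and Perron--Frobenius yields $\lambda_n < \lambda_{n+1}$. To establish the uniform bound $\lambda_n \leq \las$ I would run a Collatz--Wielandt test: on $V(E_n)$ define the positive vector $y_i := t^{d(i,v)}$, where $v$ is the branch vertex and $t := 1/\las$. A direct vertex-by-vertex check, using $\las = t + t^{-1}$ (for vertices on the arms) and $\las^2 > 3$ (for the degree-$3$ branch vertex), gives $A_{E_n} y \leq \las \cdot y$ componentwise, whence $\lambda_n \leq \las$.

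For the identification of the limit, I would analyze the Perron eigenvector equation $A_{E_n} x = \lambda x$ directly. Writing $\lambda = t + t^{-1}$ with $t \in (0, 1)$, along the long arm $v, w_1, \ldots, w_{n-4}$ the recurrence $\lambda x_{w_k} = x_{w_{k-1}} + x_{w_{k+1}}$ forces $x_{w_k} = \alpha t^k + \beta t^{-k}$, while the leaf condition at $w_{n-4}$ couples the coefficients by $\beta/\alpha = -t^{2(n-3)}$, which tends to $0$ as $n \to \infty$. The surviving limiting eigenvector thus satisfies $x_{w_k} = x_v t^k$; combining the equations at $v$ with those along the two short arms (of length $1$ and $2$) determines $x_{u_1}, x_{u_2}, x_{u_3}$ in terms of $x_v$, and substituting into the equation at $v$ produces a single polynomial equation in $\lambda$. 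Under the substitution $\rho := t^{-2}$, so that $\lambda^2 = \rho + 2 + \rho^{-1}$, this equation reduces to $\rho^3 = \rho + 1$. Hence the bounded, increasing sequence $(\lambda_n)$ converges to $\las = \rho^{1/2} + \rho^{-1/2}$.

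The main obstacle is the final algebraic reduction: showing that after elimination of the short-arm coordinates $x_{u_1}, x_{u_2}, x_{u_3}$ and the substitution $\rho = t^{-2}$, the resulting equation in $\lambda$ collapses exactly to $\rho^3 - \rho - 1 = 0$. This is the one place where the specific arm lengths of $E_n$ pin down the particular constant $\las$; everything else is either a Perron--Frobenius principle or a linear-recurrence solution on a path.
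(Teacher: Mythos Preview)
The paper does not supply its own proof of this proposition; it is attributed to Hoffman and closed with a \qed. So your outline has to stand on its own merits.

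Your monotonicity step and your limit-identification step are essentially correct, but Step~3 (the Collatz--Wielandt upper bound) contains a genuine error. You simultaneously set $t := 1/\las$ and invoke $\las = t + t^{-1}$; these two statements are incompatible. Worse, \emph{neither} choice makes the radially symmetric test vector $y_i = t^{\,d(i,v)}$ work. If $t+t^{-1} = \las$ with $t\in(0,1)$, then $t\approx 0.868$ and at the branch vertex one gets $(Ay)_v/y_v = 3t \approx 2.60 > \las$; if instead $t=1/\las$, then at any internal arm vertex $(Ay)_w/y_w = t+t^{-1} = \las + 1/\las > \las$. In either case $Ay \le \las\, y$ fails. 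The underlying reason is that the three arms of $E_n$ have different lengths, so a vector that depends only on distance to the branch cannot certify the sharp constant $\las$.

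The easy repair is to use as test vector the limiting eigenvector you already compute in Step~4: take the exact values $1$, $1/\las$, $\las/(\las^2-1)$, $1/(\las^2-1)$ at the branch vertex, the length-$1$ arm, and the two vertices of the length-$2$ arm respectively, and $t^{k}$ (with $t+t^{-1}=\las$, $t<1$) along the long arm. Then $Ay=\las\, y$ holds at every vertex except the far leaf of the long arm, where $(Ay)_{\mathrm{leaf}} = t^{n-1} < t^{n-1}+t^{n+1} = \las\, y_{\mathrm{leaf}}$, giving $\lambda_n < \las$ for every $n$. Alternatively, you can drop Step~3 entirely: the crude bound $\lambda_n \le 3$ from the maximum degree already gives convergence of the increasing sequence, and a rigorous version of Step~4 (e.g., passing to the limit in the characteristic-polynomial recursion along the long arm, rather than the heuristic ``$\beta/\alpha \to 0$'') then identifies the limit as~$\las$.
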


\begin{figure}
    \centering
    \begin{tikzpicture}[very thick, scale=0.3, baseline=(v.base)]
        \coordinate (v) at (0,0);
        \draw (3,0) -- +(0,2) node[vertex]{};
        \draw (-1,0) node[vertex]{} -- (1,0) node[vertex]{} -- (3,0) node[vertex]{};
    \end{tikzpicture}\qquad
    \begin{tikzpicture}[very thick, scale=0.3, baseline=(v.base)]
        \coordinate (v) at (0,0);
        \draw (3,0) -- +(0,2) node[vertex]{};
        \draw (-1,0) node[vertex]{} -- (1,0) node[vertex]{} -- (3,0) node[vertex]{} --  (5,0) node[vertex]{};
    \end{tikzpicture}\qquad
    \begin{tikzpicture}[very thick, scale=0.3, baseline=(v.base)]
        \coordinate (v) at (0,0);
        \draw (3,0) -- +(0,2) node[vertex]{};
        \draw (-1,0) node[vertex]{} -- (1,0) node[vertex]{} -- (3,0) node[vertex]{} --  (5,0) node[vertex]{} -- (7,0) node[vertex]{};
    \end{tikzpicture}\qquad
    \begin{tikzpicture}[very thick, scale=0.3, baseline=(v.base)]
        \coordinate (v) at (0,0);
        \draw (3,0) -- +(0,2) node[vertex]{};
        \draw[dashed] (6.7,0) -- (11,0);
        \draw (-1,0) node[vertex]{} -- (1,0) node[vertex]{} -- (3,0) node[vertex]{} -- (5,0) node[vertex]{} -- (7,0) node[vertex]{};
        \draw (11,0) node[vertex]{} -- (13,0) node[vertex]{} -- (15,0) node[vertex]{};
        \draw [braket] (15,0.1) -- (-1,0.1) node [black,midway,yshift=-15pt] {\footnotesize $n-1$};
    \end{tikzpicture}
    \caption{$E_4$, $E_5$, $E_6$ and $E_n$.} \label{fig:e2n}
\end{figure}

The finiteness result above, together with \cref{lem:e2n}, implies that $\las$ is the smallest $\la > 2$ for which $\G(\la)\setminus \G(2)$ contains infinitely many connected graphs. Naturally, the authors of \cite{JP25} posed the problem of determining all connected graphs in $\G(\las) \setminus \G(2)$.

We provide a complete classification of all connected graphs in $\G(\las) \setminus \G(2)$. Observe that the algebraic integer $-\las$ is not totally real and therefore cannot occur as a graph eigenvalue. Consequently, $\G(\las)\setminus \G(2)$ consists precisely of those graphs whose smallest eigenvalue lies in the open interval $(-\las, -2)$. To state our classification, we need the following definitions.

\begin{definition}[Augmented path extension]
    A \emph{rooted graph} $F_R$ is a graph $F$ equipped with a nonempty subset $R$ of vertices, which we refer to as \emph{roots} (depicted by solid circles). Given a rooted graph $F_R$ and $\ell \in \N$, the \emph{augmented path extension} $\ape{F_R, \ell}$ of the rooted graph $F_R$ is obtained from the disjoint union of $F$ and the rooted graph \etwo\ by adding a path $v_0 \dots v_\ell$ of length $\ell$, connecting $v_0$ to every vertex in $R$, and connecting $v_\ell$ to the two roots in \etwo. See \cref{fig:augmented} for a schematic drawing. A \emph{family of augmented path extensions} is the collection of graphs $\dset{\ape{F_R, \ell}}{\ell \ge \ell_0}$ for a fixed rooted graph $F_R$ and some $\ell_0 \in \N$.
\end{definition}

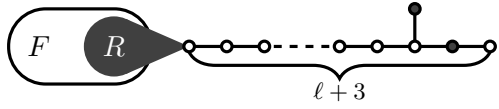
\begin{figure}
    \centering
    \begin{tikzpicture}[very thick, scale=0.5, baseline=(v.base)]
        \coordinate (v) at (0,0);
        \draw[rounded corners=14pt] (-7.8, -1) rectangle (-4, 1) {};
        \fill[darkgray] (-3,0) -- (-4.68,0.733) -- (-4.68,-0.733) -- cycle;
        \fill[darkgray] (-5, 0) circle (0.8);
        \draw[text=white] (-5,0) node{$R$};
        \draw (-7,0) node{$F$};
        \draw (3,1) node[root-vertex]{} -- (3,0);
        \draw[dashed] (-0.75,0) -- (1,0);
        \draw (-3,0) node[vertex]{} -- (-2,0) node[vertex]{} -- (-1,0) node[vertex]{};
        \draw (1,0) node[vertex]{} -- (2,0) node[vertex]{} -- (3,0) node[vertex]{} -- (4,0) node[root-vertex]{} -- (5,0) node[vertex]{};
        \draw [braket] (5,0.2) -- (-3,0.2) node [black,midway,yshift=-15pt] {\footnotesize $\ell + 3$};
    \end{tikzpicture}
    \caption[The augmented path extension.]{The augmented path extension $\ape{F_R, \ell}$.} \label{fig:augmented}
\end{figure}

\begin{definition}[Maverick graph]
    A \emph{maverick graph} is a connected graph with smallest eigenvalue in $(-\las,-2)$ that is not an augmented path extension of any rooted graph.
\end{definition}

We now present our main result in a nutshell before giving the detailed classification.

\begin{theorem}[Classification theorem at a glance] \label{thm:classification}
    The class $\G(\las) \setminus \G(2)$ of connected graphs with smallest eigenvalues in $(-\las, -2)$ consists precisely of $794$ families of augmented path extensions, and $4752$ maverick graphs with up to $19$ vertices.
\end{theorem}

In its weak form, our classification theorem says that every sufficiently large connected graph in $\G(\las)\setminus \G(2)$ resembles the graph $E_n$ for some $n$.

\begin{theorem} \label{thm:main1}
    There exists $N \in \N$ such that for every connected graph $G$ on more than $N$ vertices, if the smallest eigenvalue of $G$ is in $(-\las, -2)$, then $G$ is isomorphic to an augmented path extension of a rooted graph.
\end{theorem}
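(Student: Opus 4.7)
The plan is to first locate a long induced copy of $E_n$ inside any sufficiently large $G$ satisfying the hypothesis, and then argue that every vertex of $G$ outside this $E_n$ attaches only near the end of $E_n$ opposite to the pendant, so that $G$ is an augmented path extension whose \etwo\ gadget corresponds to the pendant end of $E_n$. Throughout I will use interlacing: every induced subgraph of $G$ inherits smallest eigenvalue greater than $-\las$, so no induced subgraph of $G$ can have smallest eigenvalue $\le -\las$.

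\textbf{Step 1: long induced $E_n$.} The Jiang--Polyanskii theorem~\cite{JP24} guarantees that for every fixed $\la < \las$, only finitely many connected graphs lie in $\G(\la)\setminus\G(2)$. Hence once $|V(G)|$ is sufficiently large, $\la_{\min}(G)$ must be arbitrarily close to $-\las$, and \cref{lem:e2n} suggests that the $E_n$ family is the only long ``limit'' configuration consistent with this. I would make this rigorous by classifying the minimal connected graphs $H$ with $\la_{\min}(H) \le -\las$ (a finite list, probably determined by a computer-assisted enumeration) and then showing that any large connected graph avoiding all such $H$ must contain an induced $E_n$ for some large $n$. The Cameron--Goethals--Seidel--Shult vector representation, treating $A_G + \la I \succeq 0$ as a Gram matrix and classifying the resulting vector configurations, should drive this classification.

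\textbf{Step 2: attachment restriction and assembly.} Fix a long induced $E_n \subseteq G$ with main path $u_1 u_2 \dots u_{n+3}$ and pendant $p$ at $u_3$. For each vertex $w \in V(G)\setminus V(E_n)$, I would show via case analysis that $N(w)\cap V(E_n)$ is contained in a bounded-size neighborhood of $u_{n+3}$, the end opposite the pendant: any attachment to an interior vertex $u_i$ with $3 < i \ll n+3$, or to the pendant end, produces a small induced subgraph whose smallest eigenvalue can be verified to be $\le -\las$, contradicting the hypothesis. With this restriction in hand, set $v_\ell := u_3$ and trace back along the path to a vertex $v_0 := u_{\ell+3}$, take $F$ to be the induced subgraph on the remaining vertices (the head near $u_{n+3}$ together with the external vertices of $G$), choose $R \subseteq V(F)$ to be the vertices adjacent to $v_0$, and let $(u_1, u_2, p)$ play the role of the \etwo\ gadget. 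This directly exhibits $G \cong \ape{F_R, \ell}$.

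\textbf{Obstacles.} The dominant difficulty is the attachment case analysis in Step 2: one must sharply identify every way an external vertex can attach to a long induced $E_n$ under the spectral constraint, which demands precise spectral bounds on a family of small perturbations of $E_n$, likely aided by computer verification. A secondary obstacle is ensuring the resulting head $F_R$ has uniformly bounded size so that a single $N$ works; this should follow from a parallel forbidden-subgraph argument applied to $F$ itself, combined with~\cite{JP24} applied to induced subgraphs containing $F$ together with a short initial segment of the tail.
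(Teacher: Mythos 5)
There is a genuine gap, and it sits in your Step 1. Your rigorous route for locating a long induced $E_n$ is to ``classify the minimal connected graphs $H$ with $\la_{\min}(H)\le-\las$ (a finite list)'' --- but that list is not finite. The constant $\las$ is precisely the threshold at which the finite forbidden-subgraph characterization of $\G(\la)$ breaks down: for instance, the connected graphs $(\ktwoc,\ell,\etwo)$ have smallest eigenvalue just below $-\las$ (approaching $-\las$ from below), while all their bounded-size induced subgraphs lie in $\G(\las)$, so minimal forbidden subgraphs for $\G(\las)$ of arbitrarily large order exist. The CGSS Gram-matrix/root-system argument you invoke is also specific to $\la=2$ (where $A_G+2I\succeq 0$ yields root-system vectors); for $\la>2$ no analogous classification is available. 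The paper escapes this trap by never trying to forbid subgraphs in \emph{all} graphs of $\G(\las)$, only in all sufficiently \emph{large connected} ones: \cref{lem:forb-extension} (built on \cref{lem:forbid-extensions} from \cite{JP24}) shows that $F$ can be excluded from every large connected graph in $\G(\la)$ as soon as every path extension $(F_R,\ell)$ has limiting smallest eigenvalue below $-\la$, the point being that any occurrence of $F$ deep inside a large connected graph drags along a long path, clique, or star extension. This relaxation of the quantifier is the missing idea, and without it (or a substitute) your Step 1 does not get off the ground.

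The paper's actual assembly is also lighter than your Step 2. It never produces a long induced $E_n$: since $\la_1(G)<-2$, \cref{thm:generalized-line-smallest-ev} says $G$ is not a generalized line graph, so $G$ contains one of the 31 minimal forbidden subgraphs for $\D_\infty$ (\cref{thm:forb-char-generalized-line}); \cref{lem:computation} plus \cref{lem:forb-extension} kill all of these except $E_2$, and \cref{lem:seven-eight-vertex} then kills every way of attaching a seventh (and an eighth) vertex to that $E_2$ except the unique one forming $E_3$ --- which already exhibits $G$ as an augmented path extension. Your Step 2, by contrast, must analyze attachments of external vertices to every position of a long $E_n$, and the claim that each bad attachment yields a \emph{small} induced subgraph with smallest eigenvalue $\le-\las$ is not right as stated: for an attachment at an interior vertex $u_i$, the eigenvalue only drops below $-\las$ once the path is long on \emph{both} sides of $u_i$ (the relevant Hoffman limit points $\alpha_j$ exceed $\las$ only for $j\ge 3$), and attachments near the $u_{n+3}$ end must of course be permitted. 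So even granting a long induced $E_n$, the case analysis needs the same extension machinery rather than a finite check on bounded subgraphs.
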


We establish \cref{thm:main1} in \cref{sec:forb} using tools developed for forbidden subgraphs characterization in \cite{JP25}, together with two classical results about generalized line graphs \cite{CDS80,CDS81,H77b,RSV81}. Since we rarely work with subgraphs that are not induced, we emphasize that \emph{all subgraphs are induced} throughout this paper.

To obtain the full classification, we proceed in two steps. First, we classify all augmented path extensions in $\G(\las)$ (excluding those already in $\G(2)$ is straightforward). Second, we enumerate the maverick graphs.

A key ingredient in pinning down the augmented path extensions in $\G(\las)$ is the following linear-algebraic lemma, which is proved in \cref{sec:lin-alg}. This lemma simplifies the task of determining whether an augmented path extension belongs in $\G(\las)$ to a finite computation.

\begin{lemma} \label{thm:reduction}
    For every rooted graph $F_R$ and $\ell \in \N$, the smallest eigenvalue of $\ape{F_R,\ell}$ is more than $-\las$ if and only if the same holds for $\ape{F_R, 0}$.
\end{lemma}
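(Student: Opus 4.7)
The plan is to restate the condition $\lambda_{\min}(\ape{F_R,\ell}) > -\las$ as the positive-definiteness $\las I + A(\ape{F_R,\ell}) \succ 0$, and then to eliminate the path and \etwo\ vertices by iterated single-vertex Schur complements. The goal is to reduce to a bordered matrix on $V(F) \cup \{v_0\}$ that does not depend on $\ell$, after which the desired equivalence is immediate.

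Label the vertices of \etwo\ as $a, b$ (the two roots) and $c$ (the non-root), with $a$--$c$ the only internal edge; in $\ape{F_R,\ell}$ the vertex $v_\ell$ is adjacent to both $a$ and $b$. I intend to eliminate in the order $c, b, a, v_\ell, v_{\ell-1}, \ldots, v_1$. A single-vertex Schur complement on a vertex with only one remaining neighbor subtracts $1/(\text{current diagonal})$ from the diagonal of that neighbor and leaves all other entries unchanged. Eliminating $c$ (pivot $\las$) sends the $(a,a)$ entry to $(\las^2-1)/\las$; eliminating $b$ (pivot $\las$) sends $(v_\ell, v_\ell)$ to $(\las^2-1)/\las$; eliminating $a$ (pivot $(\las^2-1)/\las$) then sends $(v_\ell, v_\ell)$ further to
\[
\eta \;:=\; \frac{\las^2-1}{\las} - \frac{\las}{\las^2-1}.
\]
The crucial identity is $\eta = \rho^{-1/2}$. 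After clearing denominators and using $\las^2 = \rho + 2 + \rho^{-1}$, this identity reduces to $\rho^3 = \rho + 1$, the defining equation of $\rho$.

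The path vertices $v_\ell, v_{\ell-1}, \ldots, v_1$ are eliminated next. Each such step replaces the diagonal of the next path vertex by one application of $x \mapsto \las - 1/x$. Since $\las - 1/\rho^{-1/2} = \las - \rho^{1/2} = \rho^{-1/2}$, the value $\rho^{-1/2}$ is a fixed point of this map, and every subsequent pivot remains $\rho^{-1/2}$. After eliminating $v_1$, the $(v_0, v_0)$ entry equals $\rho^{-1/2}$, while the off-diagonals between $v_0$ and $V(F)$ (which are $1$ at each vertex of $R$) have never been touched. The resulting bordered matrix on $V(F) \cup \{v_0\}$ is
\[
\tilde{M} \;=\; \begin{pmatrix} \las I + A_F & \bm{1}_R \\ \bm{1}_R^{\T} & \rho^{-1/2} \end{pmatrix},
\]
manifestly independent of $\ell$. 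The boundary case $\ell = 0$ is handled identically (with $v_0$ playing the role of $v_\ell$) and yields the same $\tilde{M}$.

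All pivots used—namely $\las$, $(\las^2-1)/\las$, and $\rho^{-1/2}$—are strictly positive. It follows that $\las I + A(\ape{F_R,\ell}) \succ 0$ if and only if $\tilde{M} \succ 0$, a condition that does not involve $\ell$. The main obstacle I expect is spotting the identity $\eta = \rho^{-1/2}$; once one notices that $\rho^{\pm 1/2}$ are the two roots of $x^2 - \las x + 1 = 0$, i.e., the transfer-matrix eigenvalues on the path at spectral value $-\las$, the identity is forced by $\rho^3 = \rho + 1$, and the rest is routine.
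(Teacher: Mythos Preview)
Your proof is correct. Both your argument and the paper's reduce the condition $\la_1\ape{F_R,\ell} > -\las$ to the positive definiteness of the same $\ell$-independent bordered matrix on $V(F)\cup\{v_0\}$ with $(v_0,v_0)$-entry $\rho^{-1/2}$; this is exactly the matrix in the paper's \cref{thm:matrix} specialized to $\la=\las$, since $\las - (\las/2+\sqrt{\las^2/4-1}) = \rho^{-1/2}$.

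The execution differs in a way worth noting. You eliminate vertices one at a time in the order $c,b,a,v_\ell,\dots,v_1$, verifying by hand that each pivot is positive and that the diagonal value stabilizes at the fixed point $\rho^{-1/2}$ of $x\mapsto \las-1/x$; the identity $\eta=\rho^{-1/2}$ is checked directly from $\rho^3=\rho+1$. The paper instead takes a single block Schur complement over all of $V(G_S,\ell-1)$ and identifies the resulting scalar via the limit characterization in \cref{lem:lim-smallest-ev} and \cref{cor:ga}. Your route is more elementary and self-contained for this specific lemma; the paper's route is set up to prove the more general \cref{lem:reduction-gen}, valid for any rooted graph $G_S$ in place of \etwo, which is used elsewhere (e.g.\ for twisted path extensions in \cref{sec:twisted}).
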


In \cref{sec:char}, we characterize all rooted graphs $F_R$ for which $\ape{F_R,0} \in \G(\las)$; these turn out to be precisely the line graphs of certain connected bipartite graphs $H_r$. We then enumerate the corresponding $H_r$ in \cref{sec:enum}. In retrospect, directly enumerating the rooted graphs $F_R$, which may have up to $14$ vertices and $6$ roots, would have been unwieldy without first identifying them as line graphs of bipartite graphs.

In \cref{sec:maverick}, we enumerate all maverick graphs. This is done by first generating graphs in $\G(\las)$ and then systematically removing those belonging to $\G(2)$ or arising as augmented path extensions. To exclude graphs in $\G(2)$, we apply a forbidden subgraph characterization of $\G(2)$ due to Kumar, Rao and Singhi \cite{KRS82}. To further eliminate augmented path extensions from $\G(\las)$, we use the observation that, roughly speaking, if $G \in \G(\las)$ is not an augmented path extension, then there exists a subgraph of $G$ with at most $11$ vertices that is not an augmented path extension.

It is natural to suspect that most graphs in $\G(\las) \setminus \G(2)$ arise as perturbations of graphs in $\G(2)$. In \cref{sec:twisted}, we show that every connected graph in $\G(\las) \setminus \G(2)$ with moderately large order can be obtained from a graph in $\G(2)$ by adding a pendant edge. Moreover, the underlying graph in $\G(2)$ turns out to be a line graph of a bipartite graph, and thus can be represented by a root system $A_n$ for some $n$.

\begin{corollary} \label{cor:main}
    For every connected graph $G$ on at least 18 vertices, if the smallest eigenvalue of $G$ is in $(-\las, -2)$, then there exists a unique leaf $v$ of $G$ such that $G-v$ is the line graph of a bipartite graph.
\end{corollary}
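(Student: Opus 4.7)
The plan is to split into two cases via the definition of maverick graph. Let $G$ satisfy the hypotheses. Either $G$ is an augmented path extension, or $G$ is a maverick; in the latter case \cref{thm:main3} forces $\abs{V(G)} \le 19$, so the hypothesis $\abs{V(G)} \ge 18$ leaves only the $13 + 3 = 16$ explicit mavericks of order $18$ or $19$. I defer the corollary for these $16$ graphs to computer verification.

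Suppose now $G = \ape{F_R, \ell}$. By \cref{thm:main2}\ref{item:main2-b} we may assume $F_R \in \F$, and then by \cref{thm:main2}\ref{item:main2-a} we have $F_R = L(H_r)$ for some connected bipartite single-rooted graph $H_r$. Label the three vertices of the attached \etwo\ as $u_1, u_2$ (its two roots) and $u_3$ (the non-root), where the unique edge inside \etwo\ is $u_2 u_3$; the augmented-path-extension construction makes $v_\ell$ adjacent to both $u_1$ and $u_2$, so $u_1$ and $u_3$ are leaves of $G$. I claim that the desired leaf is $u_1$. For existence, let $H''$ be obtained from $H$ by adjoining fresh vertices $s_0, \ldots, s_{\ell+2}$ together with edges $r s_0, s_0 s_1, \ldots, s_{\ell+1} s_{\ell+2}$. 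Then $H''$ is connected and bipartite, and the bijection sending $s_{i-1} s_i$ (with the convention $s_{-1} := r$) to $v_i$ for $0 \le i \le \ell$, sending $s_\ell s_{\ell+1}$ and $s_{\ell+1} s_{\ell+2}$ to $u_2$ and $u_3$ respectively, and identifying each edge of $H$ with the corresponding vertex of $F$, extends to an isomorphism $L(H'') \cong G - u_1$.

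For uniqueness, I exhibit an induced claw $C$ in $G$ whose only vertex that is a leaf of $G$ is $u_1$. Put $C = \sset{v_\ell, v_{\ell-1}, u_1, u_2}$ if $\ell \ge 1$, and $C = \sset{v_0, r_0, u_1, u_2}$ for any $r_0 \in R$ if $\ell = 0$. The three non-central vertices in $C$ are pairwise non-adjacent (since $u_1$ and $u_2$ are not joined by an \etwo-edge, and neither attaches to $F$ or to the path beyond $v_\ell$), so $C$ induces a claw centered at $v_\ell$. Among its four vertices, $\deg u_1 = 1$, $\deg u_2 = 2$, $\deg v_\ell \ge 3$, and the fourth vertex is $v_{\ell-1}$ (of degree $\ge 2$ when $\ell \ge 1$) or $r_0$ (of degree $\ge 2$ when $\ell = 0$, since the hypothesis $\abs{V(G)} \ge 18$ forces $\abs{V(F)} \ge 14$, so $F$ is connected and every $r_0 \in R$ has an $F$-neighbor). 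Hence $u_1$ is the unique leaf of $G$ belonging to $C$. Since line graphs are claw-free, any leaf $v$ with $G - v$ a line graph must satisfy $v \in C$, forcing $v = u_1$.

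The main obstacle is the inspection of the $16$ mavericks of order $18$ or $19$: although explicit and finite in number, they must each be checked, most naturally by computer. The augmented-path-extension case, including the minor boundary treatment when $\ell = 0$, proceeds along entirely elementary lines once the structure theorems \cref{thm:main2,thm:main3} are in hand.
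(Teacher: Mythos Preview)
Your handling of the augmented-path-extension case is essentially the paper's: both remove the isolated root of \etwo, identify what remains with $(L(H_r),\ell+2)$, and prove uniqueness by exhibiting an induced claw in which that leaf is the only degree-one vertex. One small omission: when $\ell=1$, the assertion $\deg v_{\ell-1}\ge 2$ requires the same ``$\abs{V(F)}$ is large, hence $R\neq\emptyset$'' justification you supply for $\ell=0$; otherwise $v_0$ could in principle be a leaf.

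Where you diverge is the maverick case. You reduce to a direct inspection of the sixteen explicit mavericks of order $18$ and $19$. The paper instead notes, by comparing the counts in \cref{thm:main3,thm:main4}, that every maverick of order at least $18$ is \emph{twisted}, and then treats all twisted mavericks uniformly: \cref{lem:tpe-vs-ape} (a short Schur-complement comparison) shows that $\la_1\tpe{F_R,\ell}\in(-\las,-2)$ forces $\la_1\ape{F_R,\ell}>-\las$, whence \cref{thm:reduction,lem:char} again supply a bipartite $H_r$ with $F_R=L(H_r)$, and the same claw argument finishes. Your route is shorter to state but outsources a separate finite computation; the paper's route explains structurally why the large mavericks obey the corollary and produces \cref{lem:tpe-vs-ape} as an independently useful byproduct.
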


In \cref{sec:beyond}, we explore graphs that are slightly beyond $\G(\las)$, and we end the paper with open problems in \cref{sec:remarks}.

Parts of our proofs are computer-assisted with validated numerics. We explain our computer-aided proofs and how anyone can reproduce them independently. To help readers identify computer-assisted proofs, we employ a bespoke symbol \computer\ at the conclusion of each such proof. All our code is available as ancillary files in the arXiv version of this paper.

We deliberately craft our code without relying on third-party libraries, ensuring it can be adapted across different programming languages. This approach also offers the significant advantage that individuals interested in verifying our algorithms have the flexibility to use large language models to translate our code written in Ruby into their programming language of choice, providing a valuable starting point. It is important to note that while this translation serves as an efficient initial step, further human refinement may be necessary to ensure the code fully captures the nuances of our original implementations.

\section{Forbidden subgraph characterization} \label{sec:forb}

The proof of \cref{thm:main1} centers around the notion of \emph{generalized line graphs} originally defined by Hoffman~\cite{H69}. Although we do not need their definition, for concreteness we state the alternative definition from \cite{CDS81}.

\begin{definition}[Graph with petals and generalized line graph]
    A \emph{graph with petals} is a multigraph $\hat{H}$ obtained from a graph by adding pendant double edges. A \emph{generalized line graph} $L(\hat{H})$ is the line graph of a graph $\hat{H}$ with petals where two vertices of $L(\hat{H})$ are adjacent if and only if the corresponding edges in $\hat{H}$ have exactly one vertex in common. See \cref{fig:generalized-line-graph} for a schematic drawing.
\end{definition}

\begin{figure}
    \centering
    \begin{tikzpicture}[very thick, scale=0.5, baseline=(v.base)]
        \coordinate (v) at (0,0);
        \draw (-2,0) arc[start angle=30, end angle=90, radius=1];
        \draw (-2,0) arc[start angle=-90, end angle=-150, radius=1] node[vertex]{};
        \draw (-2,0) arc[start angle=90, end angle=150, radius=1];
        \draw (-2,0) arc[start angle=-30, end angle=-90, radius=1] node[vertex]{};
        \draw (0,1) arc[start angle=-30, end angle=30, radius=1];
        \draw (0,1) arc[start angle=-150, end angle=-210, radius=1] node[vertex]{};
        \draw (2,0) arc[start angle=180, end angle=120, radius=1];
        \draw (2,0) arc[start angle=300, end angle=360, radius=1] node[vertex]{};
        \draw (2,0) arc[start angle=60, end angle=0, radius=1];
        \draw (2,0) arc[start angle=180, end angle=240, radius=1] node[vertex]{};
        \draw (2,0) arc[start angle=120, end angle=60, radius=1];
        \draw (2,0) arc[start angle=-120, end angle=-60, radius=1] node[vertex]{};
        \draw (-2,0) -- (0,-1) node[vertex]{} -- (2,0);
        \draw (-2,0) node[vertex]{} -- (2,0) node[vertex]{} -- (0,1) node[vertex]{} -- cycle;
    \end{tikzpicture}\qquad\qquad
    \begin{tikzpicture}[very thick, scale=0.5, baseline=(v.base)]
        \coordinate (v) at (0,0);
        \draw[fill=litegray, draw=none] (0,0) -- (-5.5,0.2) -- (-5.5,-0.2) -- cycle;
        \draw[fill=litegray, draw=none] (-3,1.5) -- (-5.5,0.25) -- (-5.5,-0.3) -- cycle;
        \draw[fill=litegray, draw=none] (-3,-1.5) -- (-5.5,0.3) -- (-5.5,-0.25) -- cycle;
        \draw[fill=litegray, draw=none] (-3,1.5) -- (0,3.3) -- (0,2.8) -- cycle;
        \draw[fill=litegray, draw=none] (3,1.5) -- (0,3.3) -- (0,2.8) -- cycle;
        \draw[fill=litegray, draw=none] (0,0) -- (6,0.22) -- (6,-0.22) -- cycle;
        \draw[fill=litegray, draw=none] (3,1.5) -- (6,0.27) -- (6,-0.32) -- cycle;
        \draw[fill=litegray, draw=none] (3,-1.5) -- (6,0.32) -- (6,-0.27) -- cycle;
        \draw (-3,1.5) node[vertex]{} -- (3,1.5) -- (3,-1.5) node[vertex]{} -- (-3,-1.5) -- (-3,1.5) -- (3,-1.5);
        \draw (-3,-1.5) node[vertex]{} -- (0,0) node[vertex]{} -- (3,1.5)node[vertex]{};
        \draw[fill=litegray, draw=none] (-5.5, 0) circle (1.1);
        \draw (-5, -0.5) node[vertex]{} -- (-5,0.5) node[vertex]{}-- (-6,0.5) node[vertex]{} -- (-6,-0.5) node[vertex]{} -- cycle;
        \draw[fill=litegray, draw=none] (0, 3) circle (0.9);
        \draw (-0.5,3) node[vertex]{};
        \draw (0.5,3) node[vertex]{};
        \draw[fill=litegray, draw=none] (6, 0) circle (1.4);
        \draw (7,0) -- ++(150:{sqrt(3)}) -- ++(0,{-sqrt(3)}) -- cycle;
        \draw (5,0) -- ++(30:{sqrt(3)}) -- ++(0,{-sqrt(3)}) -- cycle;
        \draw (7,0) node[vertex]{} -- ++(120:1) node[vertex]{} -- ++(-1,0) node[vertex]{} -- ++(240:1) node[vertex]{} -- ++(-60:1) node[vertex]{} -- ++(1,0) node[vertex]{} -- cycle;
    \end{tikzpicture}
    \caption{A graph $\hat{H}$ with petals and a schematic drawing of its line graph $L(\hat{H})$.} \label{fig:generalized-line-graph}
\end{figure}
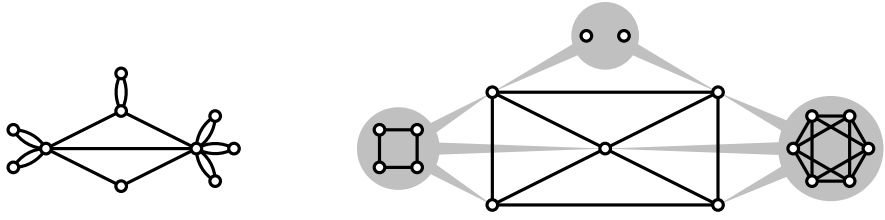

We need the following properties --- just like line graphs, all the generalized line graphs are in $\G(2)$, and they have a finite forbidden subgraph characterization.

\begin{theorem}[Theorem~2.1 of Hoffman~\cite{H77b}] \label{thm:generalized-line-smallest-ev}
    The smallest eigenvalue of a generalized line graph is at least $-2$. \qed
\end{theorem}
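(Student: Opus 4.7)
The plan is to mimic the classical proof that the line graph of a simple graph has smallest eigenvalue at least $-2$ by exhibiting an incidence-style matrix $C$ whose columns are indexed by the edges of $\hat H$ and satisfying the Gram identity $C^\T C = 2I + A_{L(\hat H)}$. Since $C^\T C$ is positive semidefinite, this identity forces every eigenvalue of $A_{L(\hat H)}$ to be at least $-2$.

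To set up $C$, I would index rows by $V(H) \cup P$, where $H$ is the underlying simple graph of $\hat H$ and $P$ is the set of petals (equivalently, the set of leaves attached by a double edge); so every petal contributes one extra row. For each ordinary edge $e = uv$ of $H$, the column of $C$ at $e$ has $+1$ in rows $u$ and $v$ and zero elsewhere, exactly as for the classical unsigned vertex-edge incidence matrix. For a petal at a vertex $v \in V(H)$ with leaf $w \in P$ and the two parallel edges $f,f'$, both columns have $+1$ in row $v$, but the column for $f$ has $+1$ in row $w$ while the column for $f'$ has $-1$ in row $w$.

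Next I would verify the Gram identity column by column. Each column has squared norm $2$, producing the $2I$ on the diagonal. For off-diagonal entries, the inner product of two columns equals the number of rows in which both are nonzero, with signs. For two ordinary edges this recovers the usual rule: $+1$ if they share exactly one vertex and $0$ otherwise. For an ordinary edge incident to $v$ paired with a petal edge at $v$, the inner product is $1$, matching the adjacency of these edges in $L(\hat H)$. Crucially, for the pair $(f,f')$ of parallel edges of a single petal, the inner product is $1\cdot 1 + 1\cdot(-1) = 0$, which correctly encodes that $f$ and $f'$ are \emph{not} adjacent in $L(\hat H)$ because they share both endpoints. Two petal edges coming from \emph{different} petals at $v$ live in disjoint extra rows and contribute only through row $v$, giving inner product $1$, which is again correct.

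Having established $C^\T C = 2I + A_{L(\hat H)}$, the conclusion $A_{L(\hat H)} \succeq -2I$ is immediate. The only real obstacle is hitting upon the $\pm 1$ convention on the petal rows; once that is in place, everything reduces to a routine case check.
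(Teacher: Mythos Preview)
Your argument is correct. The construction of $C$ with the $\pm 1$ sign convention on the petal rows is exactly what is needed; the case check you outline covers all pairs of columns, and the Gram identity $C^\T C = 2I + A_{L(\hat H)}$ follows. This is in fact the classical proof: the columns of $C$ lie in the root system $D_n$ (with $n = |V(H)| + |P|$), which is the representation underlying Hoffman's original argument and, ultimately, the Cameron--Goethals--Seidel--Shult classification.

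As for comparison with the paper: the paper does not supply a proof of this statement at all. It is quoted as a known result (Theorem~2.1 of Hoffman) and marked with a \qed, so there is nothing to compare your approach against beyond noting that what you wrote is the standard proof one would find in the cited reference or in the monograph on graphs with least eigenvalue $-2$.
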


\begin{theorem}[Cvetkovi\'{c}, Doob, and Simi\'{c}~\cite{CDS80,CDS81}, and Rao, Singhi, and Vijayan~\cite{RSV81}] \label{thm:forb-char-generalized-line}
    There are 31 minimal forbidden subgraphs, one of which is $E_6$ defined in \cref{fig:e2n}, for the family $\D_\infty$ of generalized line graphs. \qed
\end{theorem}

To prove \cref{thm:main1}, the strategy is to forbid specific subgraphs, including all the minimal forbidden subgraphs for $\D_\infty$ except $E_6$, in every sufficiently large connected graph in $\G(\las)\setminus\G(2)$. To that end, we recall from \cite{JP25} the following ways to extend a rooted graph.

\begin{definition}[Path extension, clique extension, and path-clique extension] \label{def:extension}
    Given a rooted graph $F_R$, $\ell \in \N$, and $m \in \N^+$,
    \begin{enumerate}[label=(\alph*)]
        \item the \emph{path extension} $(F_R, \ell)$ is obtained from $F$ by adding a path $v_0 \dots v_\ell$ of length $\ell$, and connecting $v_0$ to every vertex in $R$.\footnote{The path of length $0$ is simply a single vertex.}
        \item the \emph{path-clique extension} $(F_R, \ell, K_m)$ is further obtained from $(F_R, \ell)$ by adding a clique of order $m$, and connecting every vertex in the clique to $v_\ell$.
        \item the \emph{clique extension} $(F_R, K_m)$ is obtained from $F$ by adding a clique of order $m$, and connecting every vertex in the clique to every vertex in $R$.
    \end{enumerate}
\end{definition}

\cref{fig:extensions} consists of schematic drawings of the path extension $(F_R, \ell)$, the path-clique extension $(F_R, \ell, K_m)$, and the clique extension $(F_R, K_m)$.

\begin{figure}[t]
    \centering
    \begin{tikzpicture}[very thick, scale=0.5, baseline=(v.base)]
        \coordinate (v) at (0,0);
        \draw[rounded corners=14pt] (-7.8, -1) rectangle (-4, 1) {};
        \fill[darkgray] (-3,0) -- (-4.68,0.733) -- (-4.68,-0.733) -- cycle;
        \fill[darkgray] (-5, 0) circle (0.8);
        \draw[white] (-5,0) node{$R$};
        \draw (-7,0) node{$F$};
        \draw[dashed] (-2.05,0) -- (-.2,0);
        \draw (-3,0) node[vertex]{} -- (-2,0) node[vertex]{};
        \draw (-.25,0) node[vertex]{} -- (.75,0) node[vertex]{};
        \draw [braket] (.75,0.2) -- (-3,0.2) node [black,midway,yshift=-15pt] {\footnotesize $\ell + 1$};
    \end{tikzpicture}\qquad%
    \begin{tikzpicture}[very thick, scale=0.5, baseline=(v.base)]
        \coordinate (v) at (0,0);
        \draw[rounded corners=14pt] (-7.8, -1) rectangle (-4, 1) {};
        \fill[darkgray] (-3,0) -- (-4.68,0.733) -- (-4.68,-0.733) -- cycle;
        \fill[darkgray] (0.75,0) -- (2.03,0.96) -- (2.03,-0.96) -- cycle;
        \fill[darkgray] (-5, 0) circle (0.8);
        \fill[darkgray] (2.75, 0) circle (1.2);
        \draw[white] (-5,0) node{$R$};
        \draw (-7,0) node{$F$};
        \draw[white] (2.75,0) node{$K_m$};
        \draw[dashed] (-2.05,0) -- (-.2,0);
        \draw (-3,0) node[vertex]{} -- (-2,0) node[vertex]{};
        \draw (-.25,0) node[vertex]{} -- (.75,0) node[vertex]{};
        \draw [braket] (.75,0.2) -- (-3,0.2) node [black,midway,yshift=-15pt] {\footnotesize $\ell + 1$};
    \end{tikzpicture}\qquad%
    \begin{tikzpicture}[very thick, scale=0.5, baseline=(v.base)]
        \coordinate (v) at (0,0);
        \draw[rounded corners=14pt] (-7.8, -1) rectangle (-4, 1) {};
        \fill[darkgray] (-5.107,0.79) -- (-2.16,1.19) -- (-2.16,-1.19) -- (-5.107,-0.79) -- cycle;
        \fill[darkgray] (-5, 0) circle (0.8);
        \fill[darkgray] (-2, 0) circle (1.2);
        \draw[white] (-5,0) node{$R$};
        \draw (-7,0) node{$F$};
        \draw[white] (-2,0) node{$K_m$};
    \end{tikzpicture}
    \caption{Three extensions of a rooted graph $F_R$.} \label{fig:extensions}
\end{figure}

To forbid a subgraph $F$ in every sufficiently large connected graph in $\G(\la)$, it is necessary that no sufficiently large path extension of $F$ belongs to $\G(\la)$. This condition turns out to be sufficient when $\la \ge 2$. Hereafter, we denote the smallest eigenvalue of a graph $G$ by $\la_1(G)$.

\begin{lemma} \label{lem:forb-extension}
    Suppose that $F$ is a graph and $\la \ge 2$. If $\lim_{\ell \to \infty} \la_1(F_R, \ell) < -\la$ for every nonempty vertex subset $R$ of $F$, then there exists $N \in \N$ such that $F$ is never a subgraph of any connected graph on more than $N$ vertices with smallest eigenvalue at least $-\la$.
\end{lemma}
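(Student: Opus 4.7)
The plan is to prove the contrapositive: assume $G$ is a connected graph containing $F$ as an induced subgraph with $\la_1(G) \ge -\la$, and bound $|V(G)|$ in terms of $F$ and $\la$. Since $V(F)$ has only finitely many nonempty subsets and $\lim_{\ell\to\infty} \la_1(F_R, \ell) < -\la$ for each such $R$, we fix $L \in \N$ such that $\la_1(F_R, \ell) < -\la$ for every nonempty $R \subseteq V(F)$ and every $\ell \ge L$.

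The core step is a geodesic argument. Suppose some vertex $u$ of $G$ satisfies $d_G(u, V(F)) \ge L + 1$, and pick a geodesic $u_0, v_1, \dots, v_{L+1}$ in $G$ from $u_0 \in V(F)$ to $u = v_{L+1}$. Being a geodesic, it is an induced path, and for every $i \ge 2$ the vertex $v_i$ has no neighbor in $V(F)$ (else $d_G(v_i, V(F)) < i$). Setting $R := N_G(v_1) \cap V(F) \ni u_0$, the subgraph of $G$ induced on $V(F) \cup \{v_1, \dots, v_{L+1}\}$ is exactly the path extension $(F_R, L)$, so Cauchy interlacing yields $\la_1(G) \le \la_1(F_R, L) < -\la$, contradicting $\la_1(G) \ge -\la$. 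Hence every vertex of $G$ lies within distance $L$ of $V(F)$, and $V(G) = V(F) \cup W_1 \cup \dots \cup W_L$, where $W_i$ is the distance-$i$ BFS layer from $V(F)$.

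It remains to bound each $|W_i|$ in terms of $|V(F)|$ and $\la$, and here the hypothesis $\la \ge 2$ becomes crucial: since $\la_1(K_{1, t}) = -\sqrt{t}$, the graph $G$ contains no induced $K_{1, t}$ for $t > \la^2$, so the neighborhood of any vertex of $G$ has independence number at most $\lfloor \la^2 \rfloor$. Combining this with Ramsey's theorem and a pigeonhole on the attachment patterns of vertices in $W_i$ to $V(F) \cup W_1 \cup \dots \cup W_{i-1}$, any very large $W_i$ must contain an arbitrarily large clique $C$ whose vertices share a common neighborhood below. The main obstacle will be this pure-clique configuration, in which $C$ admits no further BFS descendant in $W_{i+1}$: the path-extension hypothesis alone does not rule it out directly. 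The plan to close it is via a separate claim that $\lim_{m\to\infty} \la_1(F_R, K_m) \le \lim_{\ell\to\infty} \la_1(F_R, \ell) < -\la$, which we expect to establish by an equitable-partition quotient-matrix computation exhibiting an appropriate eigenvalue in the limit $m \to \infty$. Once every $|W_i|$ is bounded, summing over $i \le L$ bounds $|V(G)|$, completing the proof.
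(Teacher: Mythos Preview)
Your approach differs from the paper's. The paper is modular: it quotes from \cite{JP24} a Ramsey-type lemma (\cref{lem:forbid-extensions}) saying that forbidding a star together with the full extension family $\X(F,\ell,m)$ --- path extensions, clique extensions, \emph{and} path-clique extensions --- suffices to forbid $F$ in large connected graphs, and then invokes \cref{lem:path-clique-extension,lem:clique-extension} to check that all three kinds of extensions have $\la_1<-\la$ under the hypothesis. You instead attempt a direct BFS-layer argument; your geodesic step bounding the radius to $L$ is clean and correct.

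The gap is in bounding $|W_i|$ for $i\ge 2$. A large clique $C\subseteq W_i$ with a common neighbour $v_{i-1}\in W_{i-1}$, together with a geodesic $u_0,v_1,\dots,v_{i-1}$ from $V(F)$, induces the \emph{path-clique} extension $(F_R,i-2,K_m)$, not a clique extension $(F_R,K_m)$ of $F$ itself. Your separate claim addresses only $(F_R,K_m)$, so it does not cover this case; you also need the path-clique analogue (this is exactly \cref{lem:path-clique-extension}), or equivalently you must apply your clique-extension claim to the rooted graph $((F_R,i-2),\{v_{i-1}\})$ after noting that its path-extension limit equals $\lim_\ell\la_1(F_R,\ell)<-\la$. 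Your dichotomy on whether $C$ has a descendant in $W_{i+1}$ does not resolve this: the radius is already capped at $L$, so a descendant cannot produce a forbidden $(F_R,L)$, and the path-clique configuration is the obstruction for every $2\le i\le L$ regardless. (Minor point: $\la\ge 2$ is not what makes the $K_{1,t}$ argument work --- that holds for any $\la$ --- it is needed in the clique- and path-clique-extension steps, as in the paper's proof of \cref{lem:clique-extension}.)
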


For the proof of \cref{lem:forb-extension}, we need the following three results. The first two results automatically extend the inequality condition on path extensions to path-clique extensions and clique extensions.

\begin{lemma}[Lemma~2.15 of Jiang and Polyanskii~\cite{JP25}] \label{lem:path-clique-extension}
    For every rooted graph $F_R$ and $\la \ge 2$, if $\lim_{\ell\to\infty} \la_1(F_R, \ell) < -\la$, then there exists $m \in \N^+$ such that $\la_1(F_R, \ell, K_m) < -\la$ for every $\ell \in \N$. \qed
\end{lemma}

\begin{lemma} \label{lem:clique-extension}
    For every rooted graph $F_R$ and $\la \ge 2$, if $\lim_{\ell\to\infty} \la_1(F_R, \ell) < -\la$, then there exists $m \in \N^+$ such that $\la_1(F_R, K_m) < -\la$.
\end{lemma}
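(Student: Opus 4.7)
The plan is to construct an explicit test vector on $(F_R, K_m)$ from an eigenvector of a sufficiently long path extension and show that its Rayleigh quotient stays strictly below $-\la$ as $m \to \infty$. By interlacing, I may assume $\la_1(F) > -\la$, since otherwise $F$ being an induced subgraph of $(F_R, K_m)$ already forces $\la_1(F_R, K_m) \le \la_1(F) < -\la$.

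First I would fix $\ell$ large enough that $\la_\ell := \la_1(F_R, \ell) < -\la$, which is possible by hypothesis, and let $x$ be a corresponding unit eigenvector, with restriction $y := x|_F$, path value $x_0 := x_{v_0}$, and $\sigma := e_R^\T y$. Since $\la_\ell < -\la \le -2 < \la_1(F)$, the matrix $\la_\ell I - A_F$ is invertible, and the $F$-part of the eigenvalue equation $A_F y + x_0 e_R = \la_\ell y$ rearranges to $y = x_0(\la_\ell I - A_F)^{-1} e_R$, giving $\sigma = x_0 \phi(\la_\ell)$ where $\phi(\la) := e_R^\T(\la I - A_F)^{-1} e_R$.

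Next I would transfer to $(F_R, K_m)$ via the test vector $\tilde x := (y, -\tfrac{\sigma}{m}\mathbf 1_m)$. A direct computation using $y^\T A_F y = \la_\ell \|y\|^2 - x_0 \sigma$ and $\sigma = x_0 \phi(\la_\ell)$ yields
\[
    \frac{\tilde x^\T A \tilde x}{\tilde x^\T \tilde x} = \frac{y^\T A_F y - \sigma^2 - \sigma^2/m}{\|y\|^2 + \sigma^2/m} \xrightarrow{m \to \infty} \la_\ell - \frac{x_0^2\,\phi(\la_\ell)\bigl(1 + \phi(\la_\ell)\bigr)}{\|y\|^2},
\]
so it suffices to show $\phi(\la_\ell)(1 + \phi(\la_\ell)) > 0$ for $\ell$ large: the limiting Rayleigh quotient then lies strictly below $\la_\ell < -\la$, and continuity in $m$ delivers $\la_1(F_R, K_m) < -\la$ for all sufficiently large $m$.

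The sign condition reduces to $\phi(\la_\ell) < -1$, which is the main obstacle. I would establish it by briefly deriving the path-limit equation: the $\ell^2$-decaying solution $x_{v_i} = x_0 \mu_+^i$ of the recurrence $\la x_i = x_{i-1} + x_{i+1}$, with $\mu_+ \in (-1, 0)$ the root of $\mu + \mu^{-1} = \la_p$, matches the $v_0$ equation $\la_p x_0 = \sigma + x_0 \mu_+$ to give $\sigma = x_0 \mu_+^{-1}(\la_p)$; combining with $\sigma = x_0 \phi(\la_p)$ yields $\phi(\la_p) = \mu_+^{-1}(\la_p) < -1$, since $\la_p := \lim_\ell \la_1(F_R, \ell) \le -\la \le -2$. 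By continuity, $\phi(\la_\ell) < -1$ for all sufficiently large $\ell$, completing the argument.
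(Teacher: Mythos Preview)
Your argument is essentially correct and follows the same Rayleigh-quotient strategy as the paper, but the two proofs diverge at the key step.

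Both proofs build a test vector on $(F_R,K_m)$ from an eigenvector of a finite path extension $(F_R,\ell)$, keeping the $F$-part $y$ intact and putting a constant $a/m$ on the clique. You choose $a=-\sigma$ (which is in fact the optimal choice in the $m\to\infty$ limit), the paper chooses $a=x_0$; either way the limiting Rayleigh quotient is $\le \la_\ell$ precisely when $\phi(\la_\ell)\le -1$, equivalently $x_0(x_0+\sigma)\le 0$. The real difference is how this sign condition is verified. The paper proves it for \emph{every} $\ell$ with $\la_1(F_R,\ell)<-\la$ by an elementary backward induction along the path, showing $x_i(x_i+x_{i+1})\ge 0$ for $i=\ell-1,\dots,0$; this is completely self-contained. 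You instead prove it only for $\ell$ large by identifying $\phi(\la_p)=\mu_+^{-1}(\la_p)<-1$ at the limit $\la_p=\lim_\ell\la_\ell$ and invoking continuity of $\phi$. This works, but note that your ``brief derivation'' of $\phi(\la_p)=\mu_+^{-1}$ via an $\ell^2$-decaying eigenvector on the semi-infinite extension is really the content of a separate lemma (the paper's \cref{lem:lim-smallest-ev}); as written it tacitly assumes that the limit of normalized finite-$\ell$ eigenvectors picks out the decaying branch, which needs a line or two of justification.

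Two minor points: your chain ``$\la_\ell<-\la\le -2<\la_1(F)$'' is garbled (you only know $\la_1(F)>-\la$, not $\la_1(F)>-2$), though the needed conclusion $\la_\ell<\la_1(F)$ is unaffected; and you should note explicitly that $y\neq 0$ (and $x_0\neq 0$) so that the limiting Rayleigh quotient makes sense --- this is easy, since $y=0$ forces $x_0 e_R=0$, hence $x_0=0$, whence the path recurrence kills the whole eigenvector.
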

\begin{proof}
    Pick $\ell \in \N^+$ such that $\la_1(F_R, \ell) < -\la$. Set $\la' := -\la_1(F_R, \ell)$. Let $v_0\dots v_\ell$ be the path added to $F$ to obtain $(F_R, \ell)$, where the vertex $v_0$ is connected to every vertex in $R$, and let $\bx\colon V(F) \cup \sset{v_0, \dots, v_\ell} \to \R$ be an eigenvector of $(F_R, \ell)$ associated with $-\la'$. We abuse notation and write $x_i$ in place of $x_{v_i}$ for $i \in \sset{0,\dots, \ell}$. Define $\tbx \colon V(F) \cup V(K_m) \to \R$ by
    \[
        \tilde{x}_v = \begin{cases}
            x_v & \text{if }v \in V(F); \\
            x_0/m  & \text{if }v \in V(K_m).
        \end{cases}
    \]

    We claim that $\sum_{v \in V(F)}x_v^2 > 0$. Indeed, assume for the sake of contradiction that $x_v = 0$ for $v \in V(F)$. Using $-\la' x_i = \sum_{u\sim v_i}x_u$ for $i \in \sset{0, \dots, \ell}$, where the sum is taken over all vertices $u$ that are adjacent to $v_i$ in $(F_R, \ell)$, we obtain that \[
        (A_P + \la' I)\begin{pmatrix}
            x_0 \\
            \vdots \\
            x_\ell
        \end{pmatrix} = \bm{0},
    \]
    where $P$ denotes the path $v_0\dots v_\ell$. Since $\la_1(P) > -2 > -\la'$, the matrix $A_P + \la' I$ is positive definite, contradicting the assumption that $\bx$ is nonzero.

    Because $\sum_{v \in V(F)}x_v^2 > 0$, clearly $\tbx$ is a nonzero vector. We compute
    \[
        \tbx^\T \tbx = \sum_{v \in V(F)}x_v^2 + m(x_0/m)^2.
    \]
    Moreover we can compute $\tbx^\T A_{(F_R, K_m)}\tbx$ as follows
    \[
        \tbx^\T A_{(F_R, K_m)}\tbx = \sum_{u,v \in V(F_R, 0)\colon u\sim v}x_ux_v + m(m-1)(x_0/m)^2.
    \]
    Since $\bx$ is an eigenvector of $(F_R, \ell)$ associated with $-\la'$, we obtain that
    \[
        \sum_{u, v \in V(F_R, 0)\colon u\sim v} x_ux_v + x_0x_1 = \sum_{v \in V(F_R, 0)}x_v \sum_{u\sim v}x_u = -\la'\sum_{v \in V(F)}x_v^2 - \la' x_0^2.
    \]
    Thus $\tbx^\T A_{(F_R, K_m)}\tbx$ can be simplified to
    \[
        \tbx^\T A_{(F_R, K_m)}\tbx = -\la'\sum_{v \in V(F)}x_v^2 -\la'x_0^2 - x_0x_1 + m(m-1)(x_0/m)^2.
    \]
    The Rayleigh principle says that $\la_1(F_R, K_m)$ is at most
    \[
        \frac{-\la'\sum_{v \in V(F)}x_v^2 - \la'x_0^2 - x_0x_1 + m(m-1)(x_0/m)^2}{\sum_{v \in V(F)}x_v^2 + m(x_0/m)^2},
    \]
    which, as $m\to\infty$, approaches
    \[
        -\la' - \frac{(\la'-2)x_0^2+x_0(x_0 + x_1)}{\sum_{v \in V(F)}x_v^2}.
    \]
    Here we used the claim that the denominator in the limit is positive.

    Recall that $\la' = -\la_1(F_R, \ell) > \la \ge 2$. It suffices to show that $x_0(x_0 + x_1)\ge 0$. In fact, we prove inductively that $x_i(x_i + x_{i+1}) \ge 0$ for $i \in \sset{\ell-1, \dots, 0}$. The base case where $i = \ell-1$ follows immediately from $-\la' x_\ell = x_{\ell-1}$ and $\la' > 2$. For the inductive step, using $-\la' x_{i+1} = x_i + x_{i+2}$ and $\la' > 2$, we obtain
    \begin{multline*}
        x_{i+1}(x_{i+1} + x_{i+2}) = x_{i+1}(x_{i+1} - \la'x_{i+1}-x_i) = -(\la'-2)x_{i+1}^2 - (x_i + x_{i+1})x_{i+1} \\
        \le -(x_i + x_{i+1})x_{i+1} = -(x_i + x_{i+1})^2 + x_i(x_i + x_{i+1}) \le x_i(x_i + x_{i+1}),
    \end{multline*}
    which implies that $x_i(x_i+x_{i+1}) \ge 0$ by the inductive hypothesis.
\end{proof}

The third result shows that forbidding a star and an extension family of $F$ effectively forbids $F$ itself in every sufficiently large connected graph. Denote by $S_k$ the star on $k+1$ vertices.

\begin{definition}[Extension family]
    Given a graph $F$ and $\ell, m \in \N^+$, the \emph{extension family} $\X(F, \ell, m)$ of $F$ consists of the path-extension $(F_R, \ell)$, the path-clique extension $(F_R, \ell_0, K_m)$, and the clique extension $(F_R, K_m)$, where $R$ ranges over the nonempty vertex subsets of $F$, and $\ell_0$ ranges over $\sset{0, \dots, \ell-1}$.
\end{definition}

\begin{lemma}[Lemma~2.6 of Jiang and Polyanskii~\cite{JP25}] \label{lem:forbid-extensions}
    For every graph $F$ and $k, \ell, m \in \N^+$, there exists $N \in \N$ such that for every connected graph $G$ on more than $N$ vertices, if no member in $\sset{S_k} \cup \X(F, \ell, m)$ is a subgraph of $G$, then neither is $F$.\qed
\end{lemma}

\begin{remark}
    Ding, Oporowski, Oxley, and Vertigan proved that for every $n \in \N$, every sufficiently large connected graph contains $S_n$, $P_n$, or $K_n$ as a subgraph \cite[Theorem 5.3]{DOOV}. \cref{lem:forbid-extensions} extends their result.
\end{remark}

We now have all of the ingredients needed to establish \cref{lem:forb-extension}.

\begin{proof}[Proof of \cref{lem:forb-extension}]
    Choose $\ell \in \N^+$ such that $\la_1(F_R, \ell) < -\la$ for every nonempty $R \subseteq V(F)$. According to \cref{lem:path-clique-extension,lem:clique-extension}, choose $m \in \N^+$ such that $\la_1(F_R, \ell_0, K_m) < -\la$ for every nonempty $R \subseteq V(F)$ and every $\ell_0 \in \N$, and $\la_1(F_R, K_m) < -\la$ for every nonempty $R \subseteq V(F)$.
    
    Suppose that $G$ is a graph with $\la_1(G) > -\la$. The choice of $\ell$ and $m$ ensures that no member in $\X(F, \ell, m)$ is a subgraph of $G$. Furthermore, the star $S_k$, whose smallest eigenvalue is $-\sqrt{k}$, cannot be a subgraph of $G$ for any $k \in \N^+$ satisfying $\sqrt{k} > \la$. Finally we apply \cref{lem:forbid-extensions} to obtain the desired $N \in \N$.
\end{proof}

With \cref{lem:forb-extension} at our disposal, we return to forbidding certain subgraphs in every sufficiently large connected graph in $\G(\las)\setminus \G(2)$. The following computation ensures that $E_6$ occurs as a subgraph in every sufficiently large connected graph in $\G(\las) \setminus \G(2)$.

\begin{lemma}[Lemma~2.14 of Jiang and Polyanskii \cite{JP25}] \label{lem:computation}
    For every minimal forbidden subgraph $F$ for the family $\D_\infty$ of generalized line graphs, if $F$ is not isomorphic to $E_6$, then $\lim_{\ell \to \infty} \la_1(F_R, \ell) < -\las$ for every nonempty vertex subset $R$ of $F$. \qed
\end{lemma}

In addition, we further forbid small supergraphs of $E_6$ through the following computation, proved with computer assistance in \cref{sec:app}.

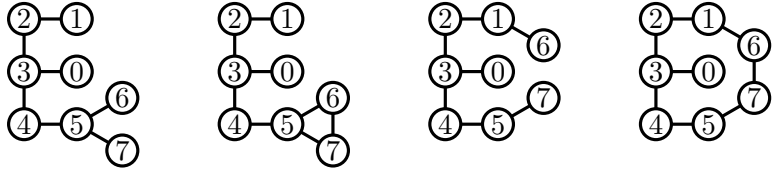
\begin{figure}
    \centering
    \begin{tikzpicture}[very thick, scale=0.7]
        \begin{scope}
            \draw (0,0) -- (1,0);
            \draw (1,1) -- (0,1) -- (0,-1) -- (1,-1);
            \draw (1,-1) -- ++(30:1) node[circ]{6};
            \draw (1,-1) -- ++(-30:1) node[circ]{7};
            \node[circ] at (1,0) {0};
            \node[circ] at (1,1) {1};
            \node[circ] at (0,1) {2};
            \node[circ] at (0,0) {3};
            \node[circ] at (0,-1) {4};
            \node[circ] at (1,-1) {5};
        \end{scope}
        \begin{scope}[shift={(4,0)}]
            \draw (0,0) -- (1,0);
            \draw (1,1) -- (0,1) -- (0,-1) -- (1,-1);
            \draw (1,-1) -- ++(30:1) node[circ]{6} -- ++(0,-1);
            \draw (1,-1) -- ++(-30:1) node[circ]{7};
            \node[circ] at (1,0) {0};
            \node[circ] at (1,1) {1};
            \node[circ] at (0,1) {2};
            \node[circ] at (0,0) {3};
            \node[circ] at (0,-1) {4};
            \node[circ] at (1,-1) {5};
        \end{scope}
        \begin{scope}[shift={(8,0)}]
            \draw (0,0) -- (1,0);
            \draw (1,1) -- (0,1) -- (0,-1) -- (1,-1);
            \draw (1,1) -- ++(-30:1) node[circ]{6};
            \draw (1,-1) -- ++(30:1) node[circ]{7};
            \node[circ] at (1,0) {0};
            \node[circ] at (1,1) {1};
            \node[circ] at (0,1) {2};
            \node[circ] at (0,0) {3};
            \node[circ] at (0,-1) {4};
            \node[circ] at (1,-1) {5};
        \end{scope}
        \begin{scope}[shift={(12,0)}]
            \draw (0,0) -- (1,0);
            \draw (1,1) -- (0,1) -- (0,-1) -- (1,-1);
            \draw (1,1) -- ++(-30:1) node[circ]{6} -- ++(0,-1);
            \draw (1,-1) -- ++(30:1) node[circ]{7};
            \node[circ] at (1,0) {0};
            \node[circ] at (1,1) {1};
            \node[circ] at (0,1) {2};
            \node[circ] at (0,0) {3};
            \node[circ] at (0,-1) {4};
            \node[circ] at (1,-1) {5};
        \end{scope}
        \end{tikzpicture}
    \caption{Four $8$-vertex graphs $F$ with two vertices $v_6$ and $v_7$ such that $F - \sset{v_6, v_7}$ is isomorphic to $E_6$, and both $F - v_6$ and $F - v_7$ are isomorphic to $E_7$.} \label{fig:e-graphs}
\end{figure}

\begin{proposition} \label{lem:seven-eight-vertex}
    For every connected graph $F$ that contains $E_6$ as a subgraph, if $F$ is a $7$-vertex graph that is not isomorphic to $E_7$, or $F$ is an $8$-vertex graph in \cref{fig:e-graphs}, then $\lim_{\ell \to \infty} \la_1(F_R, \ell) < -95/47$ for every nonempty vertex subset $R$ of $F$.
\end{proposition}

\begin{proof}[Proof of \cref{thm:main1}] \label{proof:main1}
    By combining \cref{lem:forb-extension,lem:computation}, we obtain $N_1 \in \N$ such that for every minimal forbidden subgraph $F$ for $\D_\infty$, if $F$ is not isomorphic to $E_6$, then $F$ is not a subgraph of any connected graph on more than $N_1$ vertices with smallest eigenvalue more than $-\las$. Combining \cref{lem:forb-extension,lem:seven-eight-vertex}, we obtain $N_2 \in \N$ such that for every connected graph $F$ that contains $E_6$ as a subgraph, if $F$ is a $7$-vertex graph that is not isomorphic to $E_7$, or $F$ is an $8$-vertex graph in \cref{fig:e-graphs}, then $F$ is not a subgraph of any connected graph on more than $N_2$ vertices with smallest eigenvalue more than $-\las$. Here, we use the fact that $\las \approx 2.01980 < 2.02127 \approx 95/47$.

    Suppose that $G$ is a connected graph on more than $N := \max(N_1, N_2)$ vertices with smallest eigenvalue in $(-\las,-2)$. Observe from \cref{thm:generalized-line-smallest-ev} that $G$ is not a generalized line graph. Thus $G$ contains a subgraph $E$ that is a minimal forbidden subgraph for $\D_\infty$. The choice of $N_1$ forces $E$ to be isomorphic to $E_6$. The choice of $N_2$ ensures that $G[V(E) \cup \sset{v}]$ is isomorphic to $E_7$ for every vertex $v$ at distance $1$ from $E$ in $G$, and, in fact, every such vertex $v$ is adjacent to the same vertex of $E$. In particular, $G$ is an augmented path extension of a rooted graph.
\end{proof}

\section{The linear-algebraic lemma} \label{sec:lin-alg}

We generalize \cref{thm:reduction} to a class of graphs that encapsulates both augmented path extensions and path-clique extensions.

\begin{definition}[Path augmentation]
    Given two rooted graphs $F_R$ and $G_S$ and $\ell \in \N$, the \emph{path augmentation} $(F_R, \ell, G_S)$ of $F_R$ and $G_S$ is obtained from the disjoint union of $F$ and $G$ by adding a path $v_0\dots v_\ell$ of length $\ell$, connecting $v_0$ to every vertex in $R$, and connecting $v_\ell$ to every vertex in $S$. See \cref{fig:fr-l-gs} for a schematic drawing.
\end{definition}

\begin{figure}
    \centering
    \begin{tikzpicture}[very thick, scale=0.5, baseline=(v.base)]
        \coordinate (v) at (0,0);
        \draw[rounded corners=14pt] (-7.8, -1) rectangle (-4, 1) {};
        \draw[rounded corners=14pt] (7.8, -1) rectangle (4, 1) {};
        \fill[darkgray] (-3,0) -- (-4.68,0.733) -- (-4.68,-0.733) -- cycle;
        \fill[darkgray] (3,0) -- (4.68,0.733) -- (4.68,-0.733) -- cycle;
        \fill[darkgray] (-5, 0) circle (0.8);
        \fill[darkgray] (5, 0) circle (0.8);
        \draw[text=white] (-5,0) node{$R$};
        \draw[text=white] (5,0) node{$S$};
        \draw (-7,0) node{$F$};
        \draw (7,0) node{$G$};
        \draw[dashed] (-0.75,0) -- (1,0);
        \draw (-3,0) node[vertex]{} -- (-2,0) node[vertex]{} -- (-1,0) node[vertex]{};
        \draw (1,0) node[vertex]{} -- (2,0) node[vertex]{} -- (3,0) node[vertex]{};
        \draw [braket] (3,0.2) -- (-3,0.2) node [black,midway,yshift=-15pt] {\footnotesize $\ell + 1$};
    \end{tikzpicture}
    \caption{The path augmentation $(F_R, \ell, G_S)$.} \label{fig:fr-l-gs}
\end{figure}
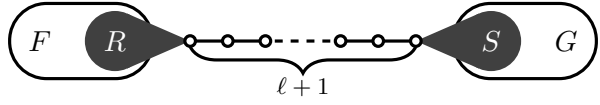

\begin{lemma} \label{lem:reduction-gen}
    Suppose that $G_S$ is a rooted graph. If $\la = -\lim_{\ell \to \infty} \la_1(G_S,\ell)$, and $\la_1(G) > -\la$, then the following holds. For every rooted graph $F_R$ and $\ell \in \N$, the smallest eigenvalue of $(F_R, \ell, G_S)$ is more than $-\la$ if and only if the same holds for $(F_R, 0, G_S)$.
\end{lemma}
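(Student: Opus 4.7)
The plan is a two-step Schur complement reduction that yields a rank-one perturbation on $V(F)$, followed by a computation showing the perturbation coefficient is independent of $\ell$. Write $H_\ell := (G_S,\ell)$ and let $v_0$ denote the pendant endpoint of the appended path; it is the unique vertex of $H_\ell$ incident to $R$ inside $(F_R,\ell,G_S)$. Since $H_\ell-v_0 = H_{\ell-1}$, Cauchy interlacing makes $\lambda_1(H_\ell)$ nonincreasing in $\ell$, and combined with $\lim_\ell \lambda_1(H_\ell) = -\lambda$ this gives $\lambda_1(H_\ell) \ge -\lambda$; the strict inequality (hence $A_{H_\ell}+\lambda I \succ 0$) will drop out of the computation below. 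Because the off-diagonal block of $A_{(F_R,\ell,G_S)}+\lambda I$ linking $V(F)$ to $V(H_\ell)$ is the rank-one matrix $\mathbf{1}_R e_{v_0}^{\T}$, Schur-complementing the $H_\ell$-block reduces the claim to
\[A_F+\lambda I - c_\ell\,\mathbf{1}_R\mathbf{1}_R^{\T} \succ 0, \qquad c_\ell := \bigl[(A_{H_\ell}+\lambda I)^{-1}\bigr]_{v_0,v_0},\]
so it suffices to prove that $c_\ell$ does not depend on $\ell$.

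Exploiting that $v_0$ has the single neighbor $v_1$ in $H_\ell$ for $\ell \ge 1$ (with $H_\ell-v_0 = H_{\ell-1}$ and $v_1$ as the new pendant endpoint), or the neighborhood $S$ for $\ell = 0$, a one-variable Schur complement on the $v_0$-row and column gives the linear-fractional recursion
\[c_\ell = \frac{1}{\lambda - c_{\ell-1}}\ \ (\ell \ge 1), \qquad c_0 = \frac{1}{\lambda - \gamma}, \qquad \gamma := \mathbf{1}_S^{\T}(A_G+\lambda I)^{-1}\mathbf{1}_S,\]
with $\gamma$ well-defined since $\lambda_1(G) > -\lambda$. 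The map $f(c) = 1/(\lambda-c)$ has exactly the fixed points $t_\pm = (\lambda \pm \sqrt{\lambda^2-4})/2$, the roots of $t^2 - \lambda t + 1 = 0$ (so $t_+ t_- = 1$); for $\lambda > 2$, $t_+$ is repelling and $t_-$ is attracting. Hence the sequence $c_\ell$ is either constantly $t_+$ (precisely when $c_0 = t_+$) or converges to $t_-$.

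The crux is to show $c_0 = t_+$, equivalently $\gamma = t_+$. To do so, I apply a third Schur complement, this time eliminating the $G$-block inside $H_\ell$, which reduces $A_{H_\ell}+\lambda I \succ 0$ to positive-definiteness of the Jacobi-type matrix $\tilde{S}(\ell) := A_{P_{\ell+1}}+\lambda I - \gamma\, e_{v_\ell} e_{v_\ell}^{\T}$ on the path. The hypothesis $\lim_\ell \lambda_1(H_\ell) = -\lambda$ forces the smallest eigenvalue of $\tilde{S}(\ell)$ to tend to zero; the classical analysis of a semi-infinite Jacobi operator with a single endpoint perturbation (solving $x_{i+1}+x_{i-1} = -s x_i$ with square-summable decaying solutions and imposing the boundary equation at the perturbed endpoint) shows that the unique discrete eigenvalue below the continuous spectrum $[\lambda-2,\lambda+2]$ equals $\lambda - \gamma - 1/\gamma$ when $\gamma > 1$ (and no such eigenvalue exists when $\gamma \le 1$), vanishing precisely when $\gamma + 1/\gamma = \lambda$, i.e., $\gamma \in \{t_-, t_+\}$; the regime $\gamma \ge 1$ singles out $\gamma = t_+$. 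Then $c_0 = 1/(\lambda - t_+) = 1/t_- = t_+$, so $c_\ell = t_+$ for all $\ell$ by induction, making $A_F+\lambda I - c_\ell\mathbf{1}_R\mathbf{1}_R^{\T}$ independent of $\ell$. The most delicate step is this spectral limit calculation, which I expect to handle via Weyl sequences with geometrically decaying trial vectors on the path (alternatively, via a direct continued-fraction computation of the resolvent entry at the perturbed endpoint, combined with the closed form $P_\ell = A\,t_+^\ell + B\,t_-^\ell$ for $\det(A_{H_\ell}+\lambda I)$, in which $A$ vanishes if and only if $\gamma = t_+$).
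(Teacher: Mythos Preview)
Your strategy coincides with the paper's: Schur-complement out the $G_S$-side of $A_{(F_R,\ell,G_S)}+\lambda I$ and show the residual criterion is $\ell$-independent. The paper eliminates the block $A_{(G_S,\ell-1)}+\lambda I$ and lands on the condition $A_{(F_R,0)}+\lambda I - t_+ E_{v_0,v_0}\succ 0$; you eliminate one more vertex and land on $A_F+\lambda I - c_\ell\,\mathbf 1_R\mathbf 1_R^{\T}\succ 0$. Either way the crux is the identity $\gamma:=\mathbf 1_S^{\T}(A_G+\lambda I)^{-1}\mathbf 1_S = t_+$, which is equivalent to the singularity of $A_{(G_S,0)}+\lambda I - t_- E_{v_0,v_0}$. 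The paper proves this with no infinite-dimensional machinery: taking the Schur complement of the \emph{path} block inside $A_{(G_S,\ell)}+xI$ gives $A_{(G_S,0)}+xI - (d_{\ell-1}/d_\ell)E_{v_0,v_0}$ with $d_n=\det(A_{P_{n-1}}+xI)$ and $d_{\ell-1}/d_\ell\to t_-(x)$, so the limiting matrix is positive semidefinite exactly on $[\lambda,\infty)$ and singular at $x=\lambda$. Your semi-infinite Jacobi route reaches the same conclusion but is heavier; the trade-off is that the paper's argument is short and fully finitary, while yours slots into a familiar spectral-theoretic picture.

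One step in your sketch needs more than you wrote. The inference ``$\lambda_1(A_{H_\ell}+\lambda I)\to 0$ forces $\lambda_1(\tilde S(\ell))\to 0$'' is not a formal consequence of the Schur complement, which transfers only the \emph{sign} of definiteness, not eigenvalue magnitudes. It is fixable: the identity $(x,y)^{\T}M(x,y)=x^{\T}\tilde S(\ell)x + \lVert Q^{1/2}(y+Q^{-1}B^{\T}x)\rVert^2$ together with the $\ell$-uniform bound $\lVert Q^{-1}B^{\T}\rVert\le \lVert(A_G+\lambda I)^{-1}\rVert\sqrt{|S|}$ sandwiches the two smallest eigenvalues within a fixed multiplicative constant, and then your bound-state computation goes through. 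Your parenthetical alternative via $p_\ell:=\det(A_{H_\ell}+\lambda I)=At_+^\ell+Bt_-^\ell$ has an analogous gap: $\lambda_1(H_\ell)\to-\lambda$ does not by itself force $A=0$, since the determinant is a product of \emph{all} eigenvalues and can diverge while the smallest one vanishes; to close that route you would still need either the eigenvalue sandwich above or the paper's threshold-in-$x$ argument.
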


\begin{proof}[Proof of \cref{thm:reduction}]
    Take $G_S = \etwo$ in \cref{lem:reduction-gen}, and observe that $(\etwo, \ell)$ is just $E_{\ell+4}$. \cref{thm:reduction} follows immediately from \cref{lem:e2n}.
\end{proof}

To provide additional context for \cref{lem:reduction-gen}, we discuss the behavior of the smallest eigenvalue of the path augmentation $(F_R, \ell, G_S)$ as $\ell \to \infty$. The following result follows from \cite[Proposition~3.5]{BB24}. We provide a self-contained proof here.

\begin{lemma} \label{lem:limit}
    For all rooted graphs $F_R$ and $G_S$, \[\lim_{\ell \to \infty}\la_1(F_R, \ell, G_S) = \min\left(\lim_{\ell \to \infty}\la_1(F_R, \ell), \lim_{\ell \to \infty}\la_1(G_S, \ell)\right).\]
\end{lemma}

\begin{proof}
    Set $\la_{F_R} = \lim_{\ell\to\infty} \la_1(F_R, \ell)$ and $\la_{G_S} = \lim_{\ell\to\infty} \la_1(G_S, \ell)$. Since both $(F_R, \ell)$ and $(G_S, \ell)$ are subgraphs of $(F_R, \ell, G_S)$, clearly $\limsup_{\ell \to\infty}\la_1(F_R, \ell, G_S) \le \min(\la_{F_R},\la_{G_S})$.
    
    To see the reverse, let $v_0\dots v_\ell$ be the path added to the disjoint union of $F$ and $G$ to obtain $(F_R, \ell, G_S)$, and let $\bx \colon V(F_R, \ell, G_S) \to \R$ be a unit eigenvector associated with $\la_1(F_R, \ell, G_S)$. We abuse notation and write $x_i$ in place of $x_{v_i}$. Choose $k \in \sset{0,\dots,\ell-1}$ such that $x_kx_{k+1}$ reaches the minimum in absolute value. In particular, using the inequality $\abs{x_ix_{i+1}} \le (x_i^2+x_{i+1}^2)/2$, we obtain
    \[
        \abs{x_kx_{k+1}} \le \frac{1}{\ell} \sum_{i=0}^{\ell-1}\abs{x_ix_{i+1}} \le \frac{1}{\ell}\sum_{i=0}^{\ell}x_i^2 \le \frac{1}{\ell}.
    \]
    Notice that removing the edge $v_kv_{k+1}$ disconnects $(F_R, \ell, G_S)$ into subgraphs $F' := (F_R, k)$ and $G' := (G_S, \ell-k-1)$. Let $\bx_1$ and $\bx_2$ be the restrictions of $\bx$ to $V(F')$ and $V(G')$ respectively. Finally, we bound the smallest eigenvalue of $(F_R, \ell, G_S)$ as follows:
    \begin{multline*}
        \la_1(F_R, \ell, G_S) = \bx^\T A_{(F_R, \ell, G_S)} \bx 
        = \bx_1^\T A_{F'} \bx_1 + 2x_kx_{k+1} + \\ + \bx_2^\T A_{G'}\bx_2
        \ge \la_{F_R} \bx_1^\T\bx_1 - 2/\ell + \la_{G_S}\bx_2^\T \bx_2
        \ge \min(\la_{F_R}, \la_{G_S}) - 2/\ell,
    \end{multline*}
    which implies that $\liminf_{\ell\to\infty} \la_1(F_R, \ell, G_S) \ge \min(\la_{F_R}, \la_{G_S})$.
\end{proof}

\begin{example}
    Consider the two cases where $F_R \in \sset{~\ktwoc~, ~\ktwo~}$ and $G_S = \etwo$. In both cases, because $\lim_{\ell \to \infty}\la_1(F_R,\ell) = -2$ and $\lim_{\ell \to \infty}\la_1(G_S,\ell) = -\las$, according to \cref{lem:limit}, $\la_1(F_R, \ell, G_S)$ approaches $-\las$ as $\ell \to \infty$. Interestingly, the smallest eigenvalue approaches $-\las$ in different ways --- $\la_1(\ktwoc, \ell, \etwo)$ approaches $-\las$ from below, whereas $\la_1(\ktwo, \ell, \etwo)$ approaches from above. \cref{lem:reduction-gen} rules out other ways $\la_1(F_R, \ell, G_S)$ could approach its limit.
\end{example}

We devote the rest of the section to the proof of \cref{lem:reduction-gen}. Denote by $E_{v,v}$ the matrix unit where the $(v,v)$-entry with value $1$ is the only nonzero entry. We first characterize $\lim_{\ell \to \infty}\la_1(G_S, \ell)$.

\begin{lemma} \label{lem:lim-smallest-ev}
    Suppose that $G_S$ is a rooted graph. Let $v_0$ be the vertex in $V(G_S,0)\setminus V(G)$. If $\la = -\lim_{\ell\to\infty}\la_1(G_S,\ell)$, then the set of $x \ge 2$, for which the matrix
    \begin{equation} \label{eqn:matrix-gs}
        A_{(G_S,0)} + xI - \left(x/2 - \sqrt{x^2/4-1}\right)E_{v_0,v_0}
    \end{equation}
    is positive semidefinite, is equal to $[\la, \infty)$, and moreover, the above matrix is singular when $x = \la$.
\end{lemma}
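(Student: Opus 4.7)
The plan is a Schur complement reduction that eliminates the interior path vertices, followed by a limit $\ell \to \infty$.

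Fix $x \ge 2$ and $\ell \ge 1$. Let $P_\ell$ denote the path $v_1 \dots v_\ell$ added to $(G_S, 0)$ to form $(G_S, \ell)$, and let $T_\ell(x) := A_{P_\ell} + xI$. Because $\la_1(P_\ell) > -2$, the tridiagonal matrix $T_\ell(x)$ is positive definite. Since the only edge between $\{v_1, \dots, v_\ell\}$ and its complement in $(G_S, \ell)$ is $v_0 v_1$, the Schur complement of $A_{(G_S,\ell)} + xI$ with respect to the block indexed by $\{v_1, \dots, v_\ell\}$ equals
\[
    M_\ell(x) := A_{(G_S, 0)} + xI - \alpha_\ell(x)\, E_{v_0, v_0}, \qquad \alpha_\ell(x) := \bigl(T_\ell(x)^{-1}\bigr)_{1,1},
\]
so that $\la_1(G_S, \ell) \ge -x$ if and only if $M_\ell(x) \succeq 0$. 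Next I would compute the limit of $\alpha_\ell(x)$: Cramer's rule gives $\alpha_\ell(x) = \det T_{\ell-1}(x) / \det T_\ell(x)$, and these determinants satisfy $q_\ell = x q_{\ell-1} - q_{\ell-2}$ with characteristic roots $\mu(x) := x/2 - \sqrt{x^2/4 - 1}$ and $1/\mu(x)$. Solving in closed form shows $\alpha_\ell(x) \uparrow \mu(x)$, hence $M_\ell(x) \to M(x) := A_{(G_S, 0)} + xI - \mu(x)\, E_{v_0, v_0}$ entrywise and $M_\ell(x) \succeq M(x)$ for every $\ell$.

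For the set characterization, I would use that $(G_S, \ell)$ is a subgraph of $(G_S, \ell')$ whenever $\ell \le \ell'$, so Cauchy interlacing makes $\la_1(G_S, \ell)$ non-increasing in $\ell$; it therefore decreases to $-\la$ and $\la_1(G_S, \ell) \ge -\la$ for all $\ell$. For $x \ge \la$, this gives $M_\ell(x) \succeq 0$ for every $\ell$, and taking the entrywise limit yields $M(x) \succeq 0$. For $2 \le x < \la$, eventually $\la_1(G_S, \ell) < -x$, so $\la_1(M_\ell(x)) < 0$ for large $\ell$, whence $\la_1(M(x)) \le 0$ by continuity of eigenvalues.

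The main obstacle is upgrading $\la_1(M(x)) \le 0$ to a strict inequality on $[2, \la)$ and extracting the singularity at $x = \la$. For this I would argue that $f(x) := \la_1(M(x))$ is continuous and strictly increasing on $[2, \infty)$. Strict monotonicity follows from $\tfrac{d}{dx} M(x) = I - \mu'(x)\, E_{v_0, v_0} \succ 0$ on $(2, \infty)$, since $\mu$ is strictly decreasing from $\mu(2) = 1$ to $\lim_{x \to \infty} \mu(x) = 0$. Combined with $f \ge 0$ on $[\la, \infty)$ and $f \le 0$ on $[2, \la)$, continuity and strict monotonicity force $f(\la) = 0$ and $f(x) < 0$ for $x \in [2, \la)$, which is precisely the desired set equality together with the singularity of $M(\la)$.
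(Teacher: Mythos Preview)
Your proof is correct and follows the same Schur-complement-and-limit approach as the paper; the paper obtains the singularity at $x=\lambda$ by a small downward perturbation of $x$ rather than your strict-monotonicity argument, but the two are equivalent. One simplification: since you already observed $M_\ell(x)\succeq M(x)$, for $x\in[2,\lambda)$ you get $\lambda_1(M(x))\le \lambda_1(M_\ell(x))<0$ directly, so the strict inequality on $[2,\lambda)$ and hence $f(\lambda)=0$ follow from continuity alone, without the derivative computation.
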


\begin{proof}
    Denote by $P_\ell$ the path of length $\ell$. Since $\lim_{\ell \to \infty}\la_1(P_\ell) = -2$, clearly $\la \ge 2$. Let $x \ge 2$ be chosen later, and for every $n \in \N^+$, set $d_n := \det\mleft(A_{P_{n-1}} + xI\mright)$. Using Laplace expansion, we derive the linear recurrence $d_{n+2} = x d_{n+1} - d_n$ with the initial conditions $d_0 = 1$ and $d_1 = x$. It follows from the classical theory of linear recurrence that $\lim_{\ell \to \infty} d_{\ell-1}/d_\ell = x/2 - \sqrt{x^2/4-1}$.

    \begin{claim*}
        For every $\ell \in \N^+$, the matrix $A_{(G_S, \ell)} + x I$ is positive semidefinite if and only if $A_{(G_S, 0)} + xI - (d_{\ell-1}/d_\ell)E_{v_0,v_0}$ is positive semidefinite.
    \end{claim*}

    \begin{claimproof}[Proof of Claim]
        We partition the matrix $A_{(G_S,\ell)} + xI$ into the following blocks:
        \[
            \begin{pmatrix}
                A_{(G_S,0)} + xI & B \\
                B^\T & C
            \end{pmatrix}.
        \]
        Since $C = A_{P_{\ell-1}} + xI$, and $\la_1(P_{\ell-1}) > -2 \ge -x$, the block $C$ is positive definite. Therefore, the above block matrix is positive semidefinite if and only if the Schur complement $A_{(G_S, 0)} + xI - BC^{-1}B^\T$ of $C$ is positive semidefinite. Let $v_1$ be the vertex in $V(G_S, 1) \setminus V(G_S, 0)$. Since the only nonzero entry of $B$ is its $(v_0, v_1)$ entry, the matrix $BC^{-1}B^\T$ simplifies to $(C^{-1})_{v_1, v_1}E_{v_0,v_0}$. Cramer's rule yields $(C^{-1})_{v_1,v_1}=\det C' / \det C$, where $C'$ is obtained from $C$ by removing the $v_1$-th row and column. To finish the proof of the claim, note that $\det C' = d_{\ell-1}$ and $\det C = d_\ell$.
    \end{claimproof}

    First, we consider the case where $x \ge \la$. The claim implies that $A_{(G_S,0)} + xI - (d_{\ell-1}/d_\ell)E_{v_0,v_0}$ is positive semidefinite. Sending $\ell$ to $\infty$, we know that the matrix in \eqref{eqn:matrix-gs} is positive semidefinite when $x \ge \la$. Next, we consider the case where $x \in [2, \la)$. The claim implies that $A_{(G_S,0)} + xI - (d_{\ell-1}/d_\ell)E_{v_0,v_0}$ is not positive semidefinite for sufficiently large $\ell$. Sending $\ell$ to $\infty$, we know that the matrix in \eqref{eqn:matrix-gs} is not positive semidefinite when $x \in [2, \la)$. Finally, assume for the sake of contradiction that the matrix in \eqref{eqn:matrix-gs} is positive definite when $x = \la$. We can then decrease $x$ slightly so that the matrix in \eqref{eqn:matrix-gs} is still positive definite, which yields a contradiction.
\end{proof}

We adopt the convention that the path extension $(G_S, -1)$ is just the graph $G$.

\begin{corollary} \label{cor:ga}
    Suppose that $G_S$ is a rooted graph. If $\la = -\lim_{\ell \to \infty} \la_1(G_S, \ell)$, then for every $\ell \in \N$, the matrix
    \[
        A_{(G_S, \ell)} + \la I - \left(\la/2 - \sqrt{\la^2/4-1}\right) E_{v_\ell,v_\ell}
    \]
    is singular, where $v_\ell$ is the vertex in $V(G_S, \ell)\setminus V(G_S, \ell-1)$.
\end{corollary}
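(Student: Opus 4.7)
The plan is to reduce Corollary \ref{cor:ga} to the singularity assertion of Lemma \ref{lem:lim-smallest-ev} by a simple re-rooting trick. When $\ell = 0$, nothing needs to be done: the statement of the corollary coincides exactly with the singularity at $x = \la$ provided by Lemma \ref{lem:lim-smallest-ev}, since $v_0$ is the unique vertex in $V(G_S, 0) \setminus V(G)$.

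For $\ell \ge 1$, I would introduce the auxiliary rooted graph $\tilde{G}_{\tilde{S}}$ defined by $\tilde{G} := (G_S, \ell - 1)$ with single root $\tilde{S} := \sset{v_{\ell-1}}$. Unpacking the definition of path extension, two easy identities emerge: first, $(\tilde{G}_{\tilde{S}}, 0)$ is isomorphic to $(G_S, \ell)$ via an identification that sends the unique new vertex of $(\tilde{G}_{\tilde{S}}, 0)$ to $v_\ell$; second, more generally $(\tilde{G}_{\tilde{S}}, \ell')$ is isomorphic to $(G_S, \ell + \ell')$ for every $\ell' \in \N$, since concatenating the two added paths yields a single path of length $\ell + \ell'$ attached to $G$ along $S$. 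The second identity yields
\[
\lim_{\ell' \to \infty} \la_1(\tilde{G}_{\tilde{S}}, \ell') = \lim_{L \to \infty} \la_1(G_S, L) = -\la,
\]
so the hypothesis of Lemma \ref{lem:lim-smallest-ev} is satisfied by $\tilde{G}_{\tilde{S}}$ with the same value of $\la$. Applying Lemma \ref{lem:lim-smallest-ev} at $x = \la$ to $\tilde{G}_{\tilde{S}}$ and translating through the first identity produces precisely the singularity of $A_{(G_S, \ell)} + \la I - (\la/2 - \sqrt{\la^2/4-1})E_{v_\ell, v_\ell}$ claimed by the corollary.

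I do not expect any substantive obstacle here: the entire argument amounts to verifying that the two natural identifications above line up with the data used by Lemma \ref{lem:lim-smallest-ev}. The only point deserving slight care is to keep the base case $\ell = 0$ separate, since the auxiliary construction would otherwise invoke the artificial convention $(G_S, -1) = G$ in a place where it carries no content.
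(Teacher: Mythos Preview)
Your proposal is correct and is essentially the same argument as the paper's: the paper also handles $\ell=0$ directly via Lemma~\ref{lem:lim-smallest-ev} and, for $\ell\ge 1$, re-roots by setting $H=(G_S,\ell-1)$ with root $r=v_{\ell-1}$, observes $(H_r,m)=(G_S,\ell+m)$, and applies the base case. The paper phrases this as ``induction on $\ell$'' but never actually invokes the inductive hypothesis, so your direct reduction is arguably cleaner.
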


\begin{proof}
    We prove by induction on $\ell$. \cref{lem:lim-smallest-ev} implies the base case where $\ell = 0$. For the inductive step, suppose that $\ell \in \N^+$. Let $r$ be the vertex in $V(G_S, \ell-1) \setminus V(G_S,\ell-2)$. Define the rooted graph $H_r$ to be the graph $H = (G_S, \ell-1)$ equipped with a single root $r$. Note that $(H_r, m) = (G_S, \ell + m)$, and so $\lim_{m\to\infty}\la_1(H_r, m) = -\la$. Apply the base case to $H_r$, we know that $A_{(H_r,0)} + \la I - (\la/2-\sqrt{\la^2/4-1})E_{v_\ell,v_\ell}$ is singular.
\end{proof}

Now we obtain a simple matrix criterion to decide whether $\la_1(F_R, \ell, G_S) > \lim_{\ell \to \infty}\la_1(G_S, \ell)$, from which \cref{lem:reduction-gen} follows immediately.

\begin{lemma} \label{thm:matrix}
    Suppose that $G_S$ is a rooted graph. If $\la = -\lim_{\ell \to \infty}\la_1(G_S, \ell)$, and $\la_1(G)> -\la$, then the following holds. For every rooted graph $F_R$ and $\ell \in \N$, the smallest eigenvalue of the path augmentation $(F_R, \ell, G_S)$ is more than $-\la$ if and only if the matrix
    \[
        A_{(F_R, 0)} + \la I - \left(\la/2 + \sqrt{\la^2/4-1}\right) E_{v_0,v_0}
    \]
    is positive definite, where $v_0$ is the vertex in $V(F_R, 0) \setminus V(F)$.
\end{lemma}

\begin{proof}
    We claim that $\la_1(G_S, \ell-1) > -\la$. Assume for the sake of contradiction that $\la_1(G_S, \ell-1) = -\la$. Let $\bx \colon V(G_S,\ell-1) \to \R$ be an eigenvector of $(G_S, \ell-1)$ associated with $-\la$. Let $v_0 \dots v_\ell$ be the path of length $\ell$ that is added to $G$ to obtain $(G_S, \ell)$. We extend $\bx$ to $\tbx \colon V(G_S, \ell) \to \R$ by setting $\tilde{x}_{v_\ell} = 0$. Since $\tbx^\T A_{(G_S,\ell)}\tbx = -\la \tbx^\T\tbx$, and $\la_1(G_S,\ell) = -\la$, the nonzero vector $\tbx$ is an eigenvector of $(G_S,\ell)$. Using $-\la x_{v_i} = \sum_{u \sim v_i} x_u$ for $i \in \sset{\ell, \dots, 1}$, one can then prove by induction that $x_{v_i} = 0$ for $i \in \sset{\ell, \dots, 0}$. Therefore the vector $\bx$ restricted to $V(G)$ is an eigenvector of $G$ associated with $-\la$, which contradicts the assumption that $\la_1(G) > -\la$.

    Coming back to the proof of the lemma, we partition the matrix $A_{(F_R, \ell, G_S)} + \la I$ into the following blocks:
    \[
        \begin{pmatrix}
            A_{(F_R,0)}+\la I & B \\
            B^\T & C
        \end{pmatrix}.
    \]
    Since $C = A_{(G_S, \ell-1)}+\la I$, and $\la_1(G_S, \ell-1) > -\la$ from the claim, the block $C$ is positive definite. Therefore, the above block matrix is positive definite if and only if the Schur complement $A_{(F_R,0)} + \la I - BC^{-1}B^\T$ of $C$ is positive definite. Since the only nonzero row of $B$ is its $v_0$-th row, say $B_{v_0}$, the matrix $BC^{-1}B^\T$ simplifies to $\left(B_{v_0}C^{-1}B_{v_0}^\T\right) E_{v_0,v_0}$.
    
    It suffices to verify that $B_{v_0} C^{-1}B_{v_0}^\T = \la/2 + \sqrt{\la^2 / 4 - 1}$. Notice that the block matrix
    \[
        \begin{pmatrix}
            \la/2 + \sqrt{\la^2 / 4 - 1} & B_{v_0} \\
            B_{v_0}^\T & C    
        \end{pmatrix}
    \] is precisely the matrix in \cref{cor:ga}, and so it is singular. Since the block $C$ is invertible, its Schur complement $\la/2 + \sqrt{\la^2 / 4 - 1} - B_{v_0}C^{-1}B_{v_0}^\T$ must be zero.
\end{proof}

\begin{proof}[Proof of \cref{lem:reduction-gen}]
    Simply notice that the matrix criterion in \cref{thm:matrix} is independent of $\ell$.
\end{proof}

\section{Characterization of rooted graphs} \label{sec:char}

In this section, we characterize those rooted graphs $F_R$ for which $\ape{F_R, 0} \in \G(\las)$. To this end, we first introduce the notion of a single-rooted graph and its line graph.

\begin{definition}[Single-rooted graph and its line graph]
    A \emph{single-rooted graph} $H_r$ is a rooted graph $H$ with a single root $r$. The \emph{line graph} of a single-rooted graph $H_r$, denoted by $L(H_r)$, is the rooted graph $F_R$, where $F$ is the line graph of $H$, and $R$ is the set of edges incident to $r$ in $H$.
\end{definition}

\begin{theorem} \label{thm:main2}
    There exists a finite family $\F$ of rooted graphs such that for every connected augmented path extension, it is an augmented path extension of a rooted graph in $\F$ if and only if its smallest eigenvalue is greater than $-\las$. Moreover, the family $\F$ has the following properties:
    \begin{enumerate}[label=(\alph*)]
        \item every rooted graph in $\F$ is the line graph of a connected bipartite single-rooted graph $H_r$, for which $r$ is not a leaf of $H$, \label{item:main2-a}
        \item for every rooted graph $F_R$ in $\F$, there exists $\ell_0 \in \sset{0,\dots,6}$ such that the smallest eigenvalue of $\ape{F_R, \ell}$ is in $(-\las,-2)$ if and only if $\ell \ge \ell_0$. \label{item:main2-c}
    \end{enumerate}
\end{theorem}

\begin{remark}
    The finiteness of the family $\F$ is not essential here, since it will be confirmed by the enumeration in \cref{sec:enum}. We include it here because the proof of \cref{thm:main2} already establishes finiteness with only minor computer assistance for small computations.
\end{remark}

To characterize such rooted graphs $F_R$, we need the following sufficient condition for line graphs, which is an immediate consequence of \cite[Theorem~4]{RW65}. The claw graph and the diamond graph are defined in \cref{fig:claw-and-diamond}.

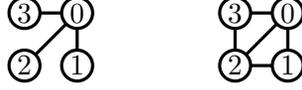
\begin{figure}
    \centering
    \begin{tikzpicture}[very thick, scale=0.7]
        \begin{scope}[shift={(4,0)}]
            \draw (-1,1) -- (0,1) -- (0,0);
            \draw (0,1) -- (-1,0);
            \node[circ] at (0,1) {0};
            \node[circ] at (0,0) {1};
            \node[circ] at (-1,0) {2};
            \node[circ] at (-1,1) {3};
        \end{scope}
        \begin{scope}[shift={(8,0)}]
            \draw (-1,1) -- (0,1) -- (0,0) -- (-1,0) -- cycle;
            \draw (0,1) -- (-1,0);
            \node[circ] at (0,1) {0};
            \node[circ] at (0,0) {1};
            \node[circ] at (-1,0) {2};
            \node[circ] at (-1,1) {3};
        \end{scope}
    \end{tikzpicture}
    \caption{The claw graph $C$ and the diamond graph $D$.} \label{fig:claw-and-diamond}
\end{figure}

\begin{theorem}[Theorem~4 of van Rooij and Wilf~\cite{RW65}] \label{thm:claw-diamond}
    Every graph that contains neither the claw graph nor the diamond graph as a subgraph is a line graph. \qed
\end{theorem}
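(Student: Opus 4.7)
The plan is to apply Krausz's theorem: a graph $G$ is a line graph if and only if its edge set admits a partition into cliques such that every vertex lies in at most two of these cliques. I would show that in any claw-free and diamond-free graph $G$, the family of maximal cliques of $G$ itself constitutes such a Krausz partition.

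First I would prove that the maximal cliques of $G$ are pairwise edge-disjoint (and hence partition $E(G)$, since every edge is contained in some maximal clique). Suppose two distinct maximal cliques $K$ and $K'$ share an edge $uv$. Since $K' \not\subseteq K$, pick $w' \in K'\setminus K$; by maximality of $K$, there exists some $z \in K$ with $z\not\sim w'$. Because $u, v\in K'$ are both adjacent to $w'$, we must have $z\neq u,v$. Then the four vertices $\{u,v,z,w'\}$ span every pair except $zw'$, inducing a diamond and contradicting diamond-freeness.

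Next I would show that every vertex lies in at most two maximal cliques. Assume for contradiction that $v\in K_1\cap K_2\cap K_3$ for pairwise distinct maximal cliques, and choose any $x_i\in K_i\setminus\{v\}$. By the edge partition just established, the edge $vx_i$ lies in $K_i$ only, so $x_i\notin K_j$ for $j\neq i$. If some $x_i\sim x_j$, then the triangle $\{v, x_i, x_j\}$ is contained in a maximal clique, which must be $K_i$ (since $vx_i$ is exclusive to $K_i$), forcing $x_j\in K_i$ --- a contradiction. Hence the $x_i$ are pairwise non-adjacent, so $\{v, x_1, x_2, x_3\}$ induces a claw, contradicting claw-freeness.

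The two claims together furnish the Krausz partition, so $G$ is a line graph. The only conceptual subtlety lies in the first claim: one must carefully locate a \emph{non-adjacent} witness $z$ inside $K$, rather than settling for an arbitrary $w\in K\setminus K'$, since that naive choice would not preclude the case $w\sim w'$, which only yields an induced $K_4$ (and not a diamond) on $\{u,v,w,w'\}$. Everything else is bookkeeping.
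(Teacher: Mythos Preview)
The paper does not supply its own proof of this theorem; it is quoted from van Rooij and Wilf with a terminal \qedsymbol, i.e.\ used as a black box. So there is nothing in the paper to compare against.

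Your argument via Krausz's characterization is correct. Both claims go through as written: for the first, choosing $w'\in K'\setminus K$ and then invoking maximality of $K$ to find $z\in K$ with $z\not\sim w'$ is exactly the right move, and your remark at the end about why one must pick $z$ this way (rather than an arbitrary $w\in K\setminus K'$) shows you have located the only real pitfall. For the second claim, the edge-disjointness of maximal cliques from the first step is precisely what forces the $x_i$ to be pairwise distinct and lets you argue that any adjacency $x_i\sim x_j$ would drag $x_j$ into $K_i$; the resulting induced claw finishes it. One tiny point worth making explicit is that each $K_i$ has at least two vertices (so $x_i$ exists): if some $K_i=\{v\}$ then $v$ is isolated and $\{v\}$ is its \emph{only} maximal clique, contradicting the distinctness of the $K_i$. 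With that, the Krausz partition is in hand and $G$ is a line graph.
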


The following computation, together with \cref{thm:reduction}, roughly speaking, enables us to forbid certain subgraphs in the rooted graph $F_R$.

\begin{proposition} \label{lem:forbidden-rooted-graph}
    For every rooted graph $F_R$ in \cref{fig:forbidden-rooted-graphs}, the smallest eigenvalue of $\ape{F_R, 0}$ is less than $-\las$.
\end{proposition}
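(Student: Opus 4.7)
The plan is to apply the matrix criterion of \cref{thm:matrix} in the special case $G_S = \etwo$. First we verify the two hypotheses: by \cref{lem:e2n}, $-\lim_{\ell \to \infty}\la_1(\etwo, \ell) = -\lim_{\ell \to \infty}\la_1(E_\ell) = -\las$, so we may take $\la = \las$; and $\la_1(K_2) = -1 > -\las$, so the condition on the underlying graph $G = K_2$ of $\etwo$ holds. A direct computation using $\las = \rho^{1/2} + \rho^{-1/2}$ and $\rho^3 = \rho + 1$ gives
\[
    \las^2/4 - 1 = \frac{(\rho^{1/2} - \rho^{-1/2})^2}{4},\qquad \text{so}\qquad \las/2 + \sqrt{\las^2/4 - 1} = \rho^{1/2}.
\]

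With this simplification, \cref{thm:matrix} says that $\la_1\ape{F_R, 0} > -\las$ if and only if the matrix
\[
    M_{F_R} \;:=\; A_{(F_R,0)} + \las I - \rho^{1/2}\,E_{v_0,v_0}
\]
is positive definite, where $v_0$ is the vertex in $V(F_R, 0) \setminus V(F)$. So, for each rooted graph $F_R$ depicted in \cref{fig:forbidden-rooted-graphs}, the task reduces to verifying that $M_{F_R}$ is \emph{not} positive definite. The entries of $M_{F_R}$ lie in the algebraic number field $\Q(\rho^{1/2})$, a degree-six extension of $\Q$ with $\rho^{1/2}$ satisfying $x^6 - x^2 - 1 = 0$, so the verification can be carried out in exact arithmetic. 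For each $F_R$ we would either compute the characteristic polynomial of $M_{F_R}$ symbolically and use Sturm's theorem (or equivalent interval arithmetic on the roots) to certify a strictly negative eigenvalue, or equivalently exhibit an explicit test vector $\bx$ with entries in $\Q(\rho^{1/2})$ satisfying $\bx^\T M_{F_R} \bx < 0$, which can be checked exactly.

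Since the figure contains only finitely many rooted graphs, each of small order, the overall verification is a finite computation and no genuinely new idea beyond \cref{thm:matrix} is needed. The main obstacle is purely one of bookkeeping: correctly enumerating the rooted graphs in the figure, forming each matrix $M_{F_R}$ with respect to the designated root set, and reliably executing the exact arithmetic over $\Q(\rho^{1/2})$. Accordingly, we expect this to be a computer-assisted proof, marked with the \computer\ symbol.
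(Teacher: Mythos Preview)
Your approach is correct but differs from the paper's in an interesting way. The paper does something much more elementary: it simply picks a rational number $101/50 = 2.02 > \las$, computes $\det\bigl(A_{\apesub{F_R,0}} + (101/50)I\bigr)$ for each of the eight rooted graphs, and observes the determinant is negative, so $\la_1\ape{F_R,0} < -101/50 < -\las$. This requires only rational arithmetic and a single determinant per graph. Your route through \cref{thm:matrix} is theoretically tidier --- it uses the machinery already built up and the nice closed form $\las/2 + \sqrt{\las^2/4-1} = \rho^{1/2}$ --- but the cost is working over the sextic extension $\Q(\rho^{1/2})$ rather than $\Q$, so the bookkeeping is heavier. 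The paper's choice buys both simplicity and a slightly stronger quantitative bound.

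One small gap in your version: \cref{thm:matrix} gives only the equivalence $\la_1\ape{F_R,0} > -\las \iff M_{F_R}$ is positive definite, so verifying that $M_{F_R}$ is \emph{not} positive definite yields only $\la_1\ape{F_R,0} \le -\las$, not the strict inequality claimed. You need one more sentence: since $-\las$ is not totally real it cannot be an eigenvalue of any graph (this is remarked in the introduction), hence $\le -\las$ upgrades to $< -\las$. The paper's rational-approximation approach sidesteps this issue entirely.
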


\begin{figure}[b]
    \centering
    \begin{minipage}[t]{0.25\textwidth} \centering
    \begin{tikzpicture}[very thick, scale=0.7, baseline=(v.base)]
        \coordinate (v) at (0,0);
        \node[root] at (0,1) {0};
        \node[root] at (0,0) {1};
        \node at (0, -1) {$\overline{K}_2$};
        \node at (0, -1.5) {};
    \end{tikzpicture}
    \end{minipage}%
    \begin{minipage}[t]{0.25\textwidth} \centering
        \begin{tikzpicture}[very thick, scale=0.7, baseline=(v.base)]
            \draw (-1,1) -- (0,1) -- (0,0);
            \draw (0,1) -- (-1,0);
            \node[root] at (0,1) {0};
            \node[root] at (0,0) {1};
            \node[circ] at (-1,0) {2};
            \node[circ] at (-1,1) {3};
            \node at (-.5, -1) {$S_3$};
        \end{tikzpicture}
    \end{minipage}%
    \begin{minipage}[t]{0.25\textwidth} \centering
        \begin{tikzpicture}[very thick, scale=0.7, baseline=(v.base)]
            \coordinate (v) at (0,0);
            \draw (0,1) -- (0,0) -- (-1,0) -- cycle;
            \node[root] at (0,1) {0};
            \node[root] at (0,0) {1};
            \node[circ] at (-1,0) {2};
            \node at (-0.5, -1) {$K_3$};
        \end{tikzpicture}
    \end{minipage}%
    \begin{minipage}[t]{0.25\textwidth} \centering
        \begin{tikzpicture}[very thick, scale=0.7, baseline=(v.base)]
            \draw (0,0) -- (1,0) -- (2,0) -- (2,1) -- (1,1) -- cycle;
            \node[circ] at (0,0) {3};
            \node[circ] at (1,0) {2};
            \node[root] at (2,0) {1};
            \node[root] at (2,1) {0};
            \node[circ] at (1,1) {4};
            \node at (1, -1) {$C_5$};
        \end{tikzpicture}
    \end{minipage}
    \begin{minipage}[t]{0.25\textwidth} \centering
        \begin{tikzpicture}[very thick, scale=0.7, baseline=(v.base)]
            \draw (0,0) -- (1,0) -- (2,0) -- (3,0) -- (3,1) -- (2,1) -- (1,1) -- cycle;
            \node[circ] at (0,0) {4};
            \node[circ] at (1,0) {3};
            \node[circ] at (2,0) {2};
            \node[root] at (3,0) {1};
            \node[root] at (3,1) {0};
            \node[circ] at (2,1) {6};
            \node[circ] at (1,1) {5};
            \node at (1.5, -1) {$C_7$};
        \end{tikzpicture}
    \end{minipage}%
    \begin{minipage}[t]{0.25\textwidth} \centering
        \begin{tikzpicture}[very thick, scale=0.7, baseline=(v.base)]
            \draw (0,0) -- (1,0) -- (2,0) -- (3,0) -- (3,1) -- (2,1) -- (1,1) -- (0,1);
            \node[circ] at (0,0) {4};
            \node[circ] at (1,0) {3};
            \node[circ] at (2,0) {2};
            \node[root] at (3,0) {1};
            \node[root] at (3,1) {0};
            \node[circ] at (2,1) {7};
            \node[circ] at (1,1) {6};
            \node[circ] at (0,1) {5};
            \node at (1.5, -1) {$P_7$};
        \end{tikzpicture}
    \end{minipage}%
    \begin{minipage}[t]{0.25\textwidth} \centering
        \begin{tikzpicture}[very thick, scale=0.7, baseline=(v.base)]
            \coordinate (v) at (0,0);
            \draw (0,1) -- (0,0) -- (-1,0) -- (-1,1) -- (-2,1) -- (-2,0) -- (-3,0) -- (-3,1) -- (-4,1) -- (-4,0);
            \draw (-1,0) ;
            \node[root] at (0,1) {0};
            \node[root] at (0,0) {1};
            \node[circ] at (-1,0) {2};
            \node[circ] at (-1,1) {3};
            \node[circ] at (-2,1) {4};
            \node[circ] at (-2,0) {5};
            \node[circ] at (-3,0) {6};
            \node[circ] at (-3,1) {7};
            \node[circ] at (-4,1) {8};
            \node[circ] at (-4,0) {9};
            \node at (-2, -1) {$P_9$};
        \end{tikzpicture}
    \end{minipage}%
    \begin{minipage}[t]{0.25\textwidth} \centering
        \begin{tikzpicture}[very thick, scale=0.7, baseline=(v.base)]
            \coordinate (v) at (0,-1);
            \foreach \x in {0,1,...,6} {
                \coordinate (node\x) at (-\x*360/7+90/7:1);
            }
            \foreach \x in {0,1,...,5} {
                \foreach \y in {\x,...,6} {
                    \draw (node\x.center) -- (node\y.center);
                }
            }
            \foreach \x in {0,1,...,6} {
                \node[root] at (node\x) {\x};
            }
            \node at (0, -2) {$K_7$};
        \end{tikzpicture}
    \end{minipage}
    \caption{The rooted graphs in \cref{lem:forbidden-rooted-graph}.} \label{fig:forbidden-rooted-graphs}
\end{figure}

\begin{cproof}
    We shall prove that $\la_1\ape{F_R, 0} < -101/50 = -2.02$. The computation is straightforward. We label the rooted graphs in \cref{fig:forbidden-rooted-graphs} by \texttt{K2C, S3, K3, C5, C7, P7, P9, K7}. For each rooted graph $F_R$, we output the determinant of $A_{\apesub{F_R,0}} + (101/50)I$, which turns out to be negative. Our code is available as the ancillary file \texttt{forb\_rooted\_graphs.rb} in the arXiv version of the paper.
\end{cproof}

\begin{lemma} \label{lem:char}
    For every rooted graph $F_R$, if $(F_R, 0)$ is connected and $\la_1\ape{F_R, 0} > -\las$, then there exists a connected bipartite single-rooted graph $H_r$ such that
    \begin{enumerate}[label=(\alph*)]
        \item $F_R$ is the line graph of $H_r$; \label{item:a}
        \item if $r$ is not a leaf of $H$, then every vertex of $H$ is at distance at most $8$ from $r$; \label{item:b}
        \item the maximum degree of $H$ is at most $7$. \label{item:c}
    \end{enumerate}
\end{lemma}

\begin{proof}
    Suppose that $F_R$ is a rooted graph such that $(F_R, 0)$ is connected and $\la_1\ape{F_R, 0} > -\las$. Let $\overline{K}_2$, $S_3$, $K_3$, $C_5$, $C_7$, $P_7$, $P_9$ and $K_7$ be the eight rooted graphs defined in \cref{fig:forbidden-rooted-graphs}.

    We first prove that the claw graph $C$ is not a subgraph of $(F_R, 1)$. Assume for the sake of contradiction that $C$ is a subgraph of $(F_R, 1)$. Then $C$ is also a subgraph of $(F_R, 2)$. Let $v_2$ be the vertex in $V(F_R, 2) \setminus V(F_R, 1)$, and let $v_2 u_1\dots u_\ell$ denote a shortest path (possibly of length $0$) from $v_2$ to a vertex $u_\ell$ at distance $1$ from $C$ in $(F_R, 2)$. Label the vertices of $C$ as in \cref{fig:claw-and-diamond}, and let $S \subseteq \sset{0,1,2,3}$ be the nonempty subset of vertices of $C$ that are adjacent to $u_\ell$ in $(F_R, 2)$. Notice that the augmented path extension $\ape{F_R, 2}$ contains
    \begin{align*}
        \ape{\overline{K}_2,\ell+1} & \text{ when } S = \sset{0};\\
        \ape{\overline{K}_2,\ell+2} & \text{ when } \abs{S} = 1 \text{ and } 0 \not\in S;\\
        \ape{S_3,\ell} & \text{ when } \abs{S \cap \sset{1,2,3}}=1 \text{ and } 0 \in S; \\
        \ape{\overline{K}_2,\ell} & \text{ when } \abs{S \cap \sset{1,2,3}} \ge 2,
    \end{align*}
    as a subgraph, which yields a contradiction in view of \cref{thm:reduction,lem:forbidden-rooted-graph}.

    We next prove that the diamond graph $D$ is not a subgraph of $(F_R, 1)$. Assume for the sake of contradiction that $D$ is a subgraph of $(F_R, 1)$. Let $v_1$ be the vertex in $V(F_R, 1) \setminus V(F_R, 0)$, and let $v_1u_1\dots u_\ell$ be a shortest path (possibly of length $0$) from $v_1$ to a vertex $u_\ell$ at distance $1$ from $D$. Label the vertices of $D$ as in \cref{fig:claw-and-diamond}, and let $S\subseteq \sset{0,1,2,3}$ be the nonempty subset of vertices of $D$ that are adjacent to $u_\ell$ in $(F_R, 1)$. Notice that the augmented path extension $\ape{F_R, 1}$ contains
    \begin{align*}
        \ape{\overline{K}_2,\ell+1} & \text{ when } S = \sset{0} \text{ or }S = \sset{2}; \\
        \ape{K_3, \ell+1} & \text{ when } S = \sset{1} \text{ or }S = \sset{3}; \\
        \ape{\overline{K}_2,\ell} & \text{ when } S \supseteq \sset{1,3}; \\
        \ape{K_3, \ell} & \text{ when } \abs{S} \ge 2 \text{ and } S \not\supseteq \sset{1,3},
    \end{align*}
    as a subgraph, which yields a contradiction in view of \cref{thm:reduction,lem:forbidden-rooted-graph}.

    At this point, \cref{thm:claw-diamond} implies that there exists a graph $H'$ such that $(F_R, 1)$ is the line graph of $H'$. Let $v_0$ be the vertex in $V(F_R, 0) \setminus V(F)$, and let $v_1$ be the vertex in $V(F_R, 1) \setminus V(F_R, 0)$. We identify the two vertices $v_0$ and $v_1$ of $(F_R, 1)$ with two edges $e_0$ and $e_1$ of $H'$. Since $v_0v_1$ is an edge of $(F_R, 1)$, the edges $e_0$ and $e_1$ share a common vertex in $H'$. Let $r, u_0, u_1$ be the vertices of $H'$ such that $e_0 = ru_0$ and $e_1 = u_0u_1$. Since $v_1$ is a leaf of $(F_R, 1)$ and $v_0v_1$ is a pendant edge of $(F_R, 1)$, we deduce that $u_1$ is a leaf of $H'$, and $u_0$ is a leaf of $H' - u_1$. Let $H = H' - \sset{u_0, u_1}$. Clearly $F_R$ is the line graph of $H_r$. Since $(F_R, 1)$ is connected, so are $H'$ and $H$.
    
    To finish the proof of \ref{item:a}, we need to further show that $H$ is bipartite. Assume for the sake of contradiction that $H$ contains an odd cycle $C_k$ of length $k$ as a subgraph that is not necessarily induced. Take a shortest path $P$ of length $\ell$ between $r$ and $C_k$. Notice that the edges in $P$ and $C_k$ induce the following graph as a subgraph of $\ape{F_R, 0}$:
    \begin{align*}
        \ape{K_3, \ell} & \text{ when } k = 3; \\
        \ape{C_5, \ell} & \text{ when } k = 5; \\
        \ape{C_7, \ell} & \text{ when } k = 7; \\
        \ape{P_7, \ell} & \text{ when } k \ge 9,
    \end{align*}
    which yields a contradiction in view of \cref{thm:reduction,lem:forbidden-rooted-graph}.

    We are left to prove \ref{item:b} and \ref{item:c}. Assume for the sake of contradiction that $r$ is not a leaf of $H$, and there exists a vertex $u_9$ at distance $9$ from $r$ in $H$. Take a shortest path $P := ru_1\dots u_9$ between $r$ and $u_9$. Since $r$ is not a leaf of $H$, we can choose a neighbor of $r$, say $u_0$, in $H$ that is not on $P$. Notice that these edges $ru_0, ru_1, u_1u_2, \dots, u_8u_9$ in $H$ induce $\ape{P_9, 0}$ as a subgraph of $\ape{F_R, 0}$, which yields a contradiction in view of \cref{lem:forbidden-rooted-graph}. Lastly, assume for the sake of contradiction that there exists a vertex $u$ with degree at least $8$ in $H$. Take a shortest path $P$ of length $\ell$ between $r$ and $u$ in $H$. We can choose neighbors of $u$, say $u_1, \dots, u_7$, in $H$ that are not on $P$. Notice that the edges of $P$ together with the edges $uu_1, \dots, uu_7$ induce $\ape{K_7, \ell}$ as a subgraph of $\ape{F_R, 0}$, which yields a contradiction in view of \cref{thm:reduction,lem:forbidden-rooted-graph}.
\end{proof}

We need one more ingredient on augmented path extensions that are not in $\G(2)$.

\begin{lemma} \label{lem:ape-not-2}
    For every rooted graph $F_R$ and every $\ell \in \N$, if the smallest eigenvalue of $\ape{F_R, \ell}$ is less than $-2$, then the same holds for $\ape{F_R, \ell+1}$.
\end{lemma}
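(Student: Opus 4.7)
The plan is to derive, for each of $H := \ape{F_R, \ell}$ and $H' := \ape{F_R, \ell+1}$, an equivalent matrix criterion for $\la_1 < -2$ via the Schur complement, and then compare the two criteria directly. I partition the vertex set of both graphs into the common part $U := V(F) \cup \sset{v_0, \dots, v_\ell}$, which induces the same subgraph in $H$ and $H'$, and the ``end part,'' consisting of the three vertices of the attached \etwo\ in $H$, or those three together with the new path vertex $v_{\ell+1}$ in $H'$. Writing $r_1, r_2$ for the two roots of \etwo\ and $s$ for its non-root, the end part of $H$ induces an edge $r_2 s$ plus an isolated $r_1$, while that of $H'$ induces the path $P_4 = r_1 v_{\ell+1} r_2 s$. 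Both induced subgraphs have smallest eigenvalue strictly above $-2$, so the corresponding block $B + 2I$ in the block decomposition of $A + 2I$ is positive definite and the Schur complement may be taken.

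The criterion then reads: $\la_1 < -2$ if and only if the Schur complement matrix $(A_U + 2I) - C^\T (B + 2I)^{-1} C$ fails to be positive semidefinite. In both $H$ and $H'$, the only vertex of $U$ with a neighbor in the end part is $v_\ell$, so $C$ has exactly one nonzero column (with entries $(1,1,0)^\T$ for $H$ and $(1,0,0,0)^\T$ for $H'$). Consequently, $C^\T (B + 2I)^{-1} C$ is a nonnegative scalar multiple $\alpha \cdot E_{v_\ell, v_\ell}$ of the rank-one unit matrix. A short computation, amounting to inverting a $3 \times 3$ and a $4 \times 4$ matrix, yields $\alpha_H = 7/6$ and $\alpha_{H'} = 6/5$.

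Because $\alpha_{H'} > \alpha_H$, the Schur complement matrix for $H'$ equals that for $H$ minus the positive semidefinite matrix $(\alpha_{H'} - \alpha_H) E_{v_\ell, v_\ell}$. Hence any test vector that witnesses the failure of positive semidefiniteness for the $H$-matrix witnesses the same failure for the $H'$-matrix, which gives $\la_1(H) < -2 \Rightarrow \la_1(H') < -2$. The main technical content is the explicit evaluation of the two scalars $\alpha_H$ and $\alpha_{H'}$; the crucial qualitative fact — $\alpha_{H'} > \alpha_H$ — can be viewed as the reason why lengthening the path beyond the ``$E_n$-type'' end drives the smallest eigenvalue further below $-2$.
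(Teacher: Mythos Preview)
Your proof is correct, but it takes a genuinely different route from the paper's. The paper gives a short variational argument: starting from a vector $\bx$ on $\ape{F_R,\ell}$ with $\bx^\T A\bx < -2\,\bx^\T\bx$, it builds $\tbx$ on $\ape{F_R,\ell+1}$ by negating the coordinates on $V(F)$, assigning the new path vertex $v_{-1}$ (inserted between $F$ and $v_0$) the value $-x_{v_0}$, and keeping everything else fixed. A two-line computation then gives $\tbx^\T A'\tbx = \bx^\T A\bx - 2x_{v_0}^2$ and $\tbx^\T\tbx = \bx^\T\bx + x_{v_0}^2$, which immediately yields $\tbx^\T A'\tbx < -2\,\tbx^\T\tbx$.

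You instead lengthen the path on the \etwo\ side and reduce the question, via Schur complements, to comparing two scalars $\alpha_H = 7/6$ and $\alpha_{H'} = 6/5$ that perturb the common block $A_U + 2I$ at a single diagonal entry. This is entirely in the spirit of the paper's \cref{thm:matrix} in \cref{sec:lin-alg}, and indeed your argument can be read as a finite-$\ell$ analogue of that lemma specialized to $\la = 2$. What you gain is a uniform, mechanical framework: the same template would handle any fixed ``end gadget'' with smallest eigenvalue above $-2$, and makes transparent \emph{why} the inequality goes the right way (the diagonal entry $(B+2I)^{-1}_{v_{\ell+1},v_{\ell+1}}$ increases along the $E_n$ sequence, as one expects from \cref{lem:e2n}). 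What the paper's argument gains is brevity and self-containment --- it needs no matrix inversion, no block decomposition, and no appeal to the positive-definiteness of the end block; the sign-flip trick does all the work in two lines.
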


\begin{proof}
    Let $v_0\dots v_\ell$ be the path of length $\ell$ added to the disjoint union of $F$ and the rooted graph \etwo\ to obtain $\ape{F_R, \ell}$, and let $v_{-1}v_0\dots v_\ell$ be the corresponding path of length $\ell+1$ to obtain $\ape{F_R, \ell+1}$. Suppose that $\la_1\ape{F_R, \ell} < -2$. We can pick a nonzero vector $\bx \colon V\ape{F_R,\ell} \to \R$ such that $\bx^\T A_{\apesub{F_R,\ell}}\bx < -2 \bx^\T\bx$. Define the vector $\tbx \colon V\ape{F_R, \ell+1} \to \R$ by
    \[
        \tilde{x}_v = \begin{cases}
            -x_v & \text{if }v \in V(F); \\
            -x_{v_0} & \text{if }v = v_{-1}; \\
            x_v & \text{otherwise}. \\
        \end{cases}
    \]
    Clearly, $\tbx^\T A_{\apesub{F_R, \ell+1}}\tbx = \bx^\T A_{\apesub{F_R, \ell}} \bx - 2x_{v_0}^2$ and $\tbx^\T \tbx = \bx^\T \bx + x_{v_0}^2$. In particular, $\tbx$ is a nonzero vector that satisfies $\tbx^\T A_{\apesub{F_R, \ell+1}}\tbx < -2\tbx^\T \tbx$, which implies that $\la_1\ape{F_R, \ell+1} < -2$ according to the Rayleigh principle.
\end{proof}

We are in the position to characterize the rooted graphs $F_R$ for which $\ape{F_R, 0} \in \G(\las)$.

\begin{proof}[Proof of \cref{thm:main2}] \label{proof:main}
    Let the family $\F$ consist of rooted graphs $F_R$ such that $(F_R, 0)$ is connected, $\la_1\ape{F_R, \ell} > -\las$ for every $\ell \in \N$, and $F_R = K_0$ or $\abs{R} \ge 2$. Consider a connected augmented path extension of a rooted graph $F_R$ in $\G(\las)$. Without loss of generality, we may assume that either $F_R = K_0$ or $\abs{R} \ge 2$. Clearly, $(F_R, 0)$ is connected. By \cref{thm:reduction}, $\la_1\ape{F_R, \ell} > -\las$ for every $\ell \in \N$, and so $F_R \in \F$.

    For \ref{item:main2-a}, apply \cref{lem:char} to each rooted graph $F_R \in \F$ to obtain a connected bipartite single-rooted graph $H_r$ such that $F_R$ is the line graph of $H_r$. Since $R = \varnothing$ or $\abs{R} \ge 2$, the root $r$ of $H_r$ cannot be a leaf of $H$. Furthermore, \cref{lem:char}\ref{item:b} and \cref{lem:char}\ref{item:c} imply that the order of $H_r$ is bounded by $7^8$, implying that $\F$ is finite.

    For \ref{item:main2-c}, take a rooted graph $F_R$ in $\F$. Note that $\ape{F_R,6}$ contains $E_{10}$ as a subgraph, whose smallest eigenvalue is less than $-2$. Let $\ell_0 \in \N$ be the minimum $\ell \in \N$ such that $\la_1\ape{F_R,\ell} < -2$. In particular, $\ell_0 \le 6$. By \cref{lem:ape-not-2}, $\la_1\ape{F_R, \ell} < -2$ if and only if $\ell \ge \ell_0$.
\end{proof}

\section{Enumeration of the rooted graphs} \label{sec:enum}

In this section, we enumerate, by means of a computer search, all connected bipartite single-rooted graphs $H_r$ such that $r$ is not a leaf of $H$, and $\la_1\ape{L(H_r), 0} > -\las$, and we describe them concisely through their maximal members. We also record, for each such $H_r$, the minimum value of $\ell_0 \in \sset{0,\dots,6}$ for which $\la_1\ape{L(H_r),\ell_0} < -2$. By \cref{thm:main2}, each pair $(H_r, \ell_0)$ thus determines a family of augmented path extensions $\ape{F_R, \ell}$ in $\G(\las) \setminus \G(2)$, where $F_R = L(H_r)$ and $\ell \ge \ell_0$.

\begin{definition}
    Given two single-rooted graphs $H_r$ and $H'_s$, we say that $H_r$ is a \emph{general subgraph} of $H'_s$ if there exists an injective graph homomorphism from $H$ to $H'$ that maps $r$ to $s$.
\end{definition}

\begin{theorem} \label{thm:enumeration}
    Let the family $\HH$ consist of the connected bipartite single-rooted graphs $H_r$ such that $r$ is not a leaf of $H$, and the smallest eigenvalue of $\ape{L(H_r), 0}$ is greater than $-\las$. Let $\HH^*$ be the subfamily of $\HH$ that consists members that are maximal under general subgraphs.
    \begin{enumerate}[label=(\alph*)]
        \item For every general subgraph $H_r$ of a member in $\HH^*$, if $H$ is connected and $r$ is not a leaf of $H$, then $H_r$ is in $\HH$. \label{item:enum-a}
        \item There are a total of 794 members of $\HH$ with the following statistics.\footnote{The \emph{size} of a graph is the number of its edges.}
        \begin{center}

    \end{minipage}
    \caption{Single-rooted graphs of $\HH$ that are maximal under general subgraphs.} \label{fig:maximal-root-graphs}
\end{figure}

\begin{cproof}
    For \ref{item:enum-a}, consider a general subgraph $H_r$ of $H_s' \in \HH$. Since $\ape{L(H_r),0}$ is a subgraph of $\ape{L(H_s'),0}$, we know that $\la_1\ape{L(H_r),0} > -\las$.

    To enumerate the members in $\HH$, notice that for every member $H_r \in \HH$, except the trivial single-rooted graph $K_1$, there exists a sequence $H_r^{(2)}, \dots, H_r^{(n)} = H_r$ of members of $\HH$ such that $H_r^{(2)} = \stwo$, and for every $i \in \sset{2,\dots, n-1}$, $H^{(i+1)}$ is obtained from $H^{(i)}$ by adding an edge that is incident to at least one vertex of $H^{(i)}$. This allows us to search for more non-trivial members of $\HH$ by adding a new edge to the existing ones.
    
    We store members of $\HH$ in the hash \texttt{dict} based on their size, and store those that are maximal under general subgraphs in the array \texttt{maximal}. At the start, \texttt{dict[0]} consists of the trivial single-rooted graph $K_1$, \texttt{dict[2]} consists of \stwo, and the counter \texttt{m} is incremented from \texttt{2}.
    
    Whenever \texttt{dict[m]} is nonempty, we iterate through members of \texttt{dict[m]}. For each $H_r$ in \texttt{dict[m]}, we carry out the following steps.
    \begin{enumerate}[label=(\roman*)]
        \item We add a new edge to $H_r$ in every possible way to obtain new connected bipartite single-rooted graphs $H_r'$.
        \item We admit those $H_r'$ with $\la_1{\ape{L(H_r'), 0}} > -\las$ to \texttt{dict[m+1]} (cf.\ \cref{subsec:1,subsec:3}).
        \item We append $H_r$ to \texttt{maximal} when no $H_r'$ was admitted to \texttt{dict[m+1]}.
    \end{enumerate}
    
    For each $H_r$ in \texttt{dict}, we output it as a string of the form \texttt{u[1]u[2]...u[2e-1]u[2e]} which lists the edges \texttt{u[1]u[2],...,u[2e-1]u[2e]} of $H_r$, and we designate \texttt{r} to represent the root of $H_r$ in the string. We also output the minimum $\ell_0 \in \sset{0,\dots, 6}$ such that $\la_1{\ape{L(H_r), \ell_0}} < -2$ (cf.\ \cref{subsec:2}). Finally, we output the maximal single-rooted graphs stored in \texttt{maximal}.

    Our code and its output are available as the ancillary files \texttt{enum\_rooted\_graphs.rb} and\linebreak \texttt{enum\_rooted\_graphs.txt} in the arXiv version of the paper.
\end{cproof}

For the rest of this section, we share further details of our implementation.

\subsection{Positive definiteness of \texorpdfstring{$A_{G'} + \las I$}{A\_G + λ*I}} \label{subsec:1}

Before we admit a single-rooted graph $H_r'$ to \texttt{dict[m+1]}, we need to check whether $A_{G'} + \las I$ is positive definite, where $G' = \ape{L(H_r'), 0}$. Since $H_r'$ is obtained from $H_r$ in \texttt{dict[m]} by adding a new edge, the graph $G := \ape{L(H_r), 0}$ can be obtained from $G'$ by removing a vertex. Since $A_G + \las I$ is already positive definite, by Sylvester's criterion, it suffices to check whether the determinant of $A_{G'} + \las I$ is positive. To avoid the irrational number $\las$, we use two rational approximations $\las_-$ and $\las_+$:
\[
    2.0198008850 \approx 18259/9040 =: \las_- < \las < \las_+ := 91499/45301 \approx 2.0198008874,
\]
Exactly one of the following two cases occurs.

\paragraph{Case 1:} $\det(A_{G'} + \las_- I) > 0$. Since $A_G + \las_- I$ is positive definite, so is $A_{G'} + \las_- I$ according to Sylvester's criterion. In this case, we can assert that $A_{G'} + \las I$ is positive definite.

\paragraph{Case 2:} $\det(A_{G'} + \las_+ I) < 0$. Since the matrix $A_{G'} + \las_+ I$ is not positive definite, we can assert that $A_{G'} + \las I$ is not positive definite either.

\bigskip

Otherwise, we raise an exception, although this never occurs for any graph encountered throughout the computer search.

\subsection{Hash function of single-rooted graphs} \label{subsec:3}

When we admit a single-rooted graph $H_s'$ to \texttt{dict[m+1]}, we need to check whether it is isomorphic to an existing member of \texttt{dict[m+1]}. To efficiently detect isomorphic duplicates, we maintain a hash table \texttt{@hash} of existing members of \texttt{dict[m+1]} using the following hash function.

For a bipartite single-rooted graph $H_r$, its hash value is a triple \texttt{[dr, dA, dB]}, where \texttt{dr} is the degree of $r$ in $H$, \texttt{dA} is the degree sequence of the vertices in the part that contains $r$, and \texttt{dB} is the degree sequence of the vertices in the part that does not contain $r$. Clearly, when two bipartite single-rooted graphs are isomorphic, their hash values are equal.

This allows us to test whether $H_s'$ is isomorphic to any existing member of \texttt{dict[m+1]} by examining only the members of \texttt{@hash[hv]}, where \texttt{hv} is the hash value of $H_s'$.

\subsection{Positive semidefiniteness of \texorpdfstring{$A_G + 2I$}{A\_G + 2I}} \label{subsec:2}

When computing the minimum $\ell_0$, Sylvester's criterion implies that checking whether $A_G + 2I$ is positive semidefinite amounts to verifying that all principal minors of $A_G + 2I$ are nonnegative. To make this subroutine more efficient, we use the following fact.

\begin{theorem}[Theorem~2.5 of Bussemaker and Neumaier~\cite{BN92}] \label{cor:two}
    There is no graph whose smallest eigenvalue is in $(\la_1(E_{10}),-2)$, where $\la_1(E_{10}) \approx -2.006594$. \qed
\end{theorem}

\begin{corollary}
    For every graph $G$, the matrix $A_G + 2I$ is positive semidefinite if and only if the matrix $A_G + (305/152) I$ is positive definite.
\end{corollary}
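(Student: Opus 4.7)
The plan is to translate both matrix conditions into conditions on $\lambda_1(G)$, the smallest eigenvalue of $G$, and then invoke \cref{cor:two}. Specifically, $A_G + 2I$ is positive semidefinite if and only if $\lambda_1(G) \ge -2$, and $A_G + (305/152)I$ is positive definite if and only if $\lambda_1(G) > -305/152$. So the claim reduces to showing that, for every graph $G$, $\lambda_1(G) \ge -2$ is equivalent to $\lambda_1(G) > -305/152$.

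The forward direction is immediate: $305/152 > 2$ (since $305 > 304$), so $\lambda_1(G) \ge -2 > -305/152$.

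For the reverse direction, assume $\lambda_1(G) > -305/152$ and suppose for contradiction that $\lambda_1(G) < -2$. By \cref{cor:two}, no graph has smallest eigenvalue in the open interval $(\lambda_1(E_6), -2)$, so this forces $\lambda_1(G) \le \lambda_1(E_6)$. The main verification is then that $\lambda_1(E_6) < -305/152$, equivalently $305/152 < |\lambda_1(E_6)|$; numerically $305/152 \approx 2.0065789$ while $|\lambda_1(E_6)| \approx 2.0065936$, so the inequality holds. Combining, $\lambda_1(G) \le \lambda_1(E_6) < -305/152$, contradicting our assumption.

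The only nonroutine step is the rigorous verification that $305/152 < |\lambda_1(E_6)|$. Rather than quoting a numerical approximation, one certifies this by evaluating the characteristic polynomial of $A_{E_6}$ at $-305/152$ and checking its sign: since $A_{E_6} + 2I$ is positive semidefinite with a unique $0$-eigenvalue, the eigenvalues of $A_{E_6}$ all exceed $-305/152$ except $\lambda_1(E_6)$, so $\det(A_{E_6} + (305/152)I)$ is negative exactly when $\lambda_1(E_6) < -305/152$. This is a finite rational computation on the $7 \times 7$ matrix $A_{E_6} + (305/152)I$, and its negative determinant finishes the proof.
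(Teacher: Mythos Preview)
Your approach matches the paper's exactly: both reduce the statement to the chain $\lambda_1(E_6) < -305/152 < -2$ and then invoke \cref{cor:two}.

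One small slip in your final paragraph: in this paper's notation $E_6$ has $10$ vertices and $\lambda_1(E_6) \approx -2.006594 < -2$, so $A_{E_6} + 2I$ is \emph{not} positive semidefinite. Your determinant certificate still works, though, for a simpler reason: if $\det\bigl(A_{E_6} + (305/152)I\bigr) < 0$ then an odd (hence at least one) number of eigenvalues of $A_{E_6}$ lie below $-305/152$, which already gives $\lambda_1(E_6) < -305/152$. So the argument is sound once the PSD remark is dropped.
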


\begin{proof}
    Since $\la_1(E_{10}) < -305/152 < -2$, the smallest eigenvalue of $G$ is at least $-2$ precisely when it exceeds $-305/152$.
\end{proof}

In our implementation, we assert that $A_G + 2I$ is positive semidefinite if and only if all the \emph{leading} principal minors of $A_G + (305/152)I$ are positive.

\section{Enumeration of the maverick graphs} \label{sec:maverick}

In this section, we enumerate the connected graphs in $\G(\las)\setminus\G(2)$ that are not augmented path extensions.

\begin{theorem} \label{thm:main3}
    There are a total of 4752 maverick graphs with the following statistics.
    \begin{center}
        \begin{tabular}{cccccccccccc}
            order & 9 & 10 & 11 & 12 & 13 & 14 & 15 & 16 & 17 & 18 & 19 \\
            \hline
            \# & 13 & 629 & 1304 & 1237 & 775 & 408 & 221 & 107 & 42 & 13 & 3
        \end{tabular}
    \end{center}
\end{theorem}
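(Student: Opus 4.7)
The plan is a computer-assisted exhaustive enumeration, with the range of the search bounded by \cref{thm:main1}. First, from the proof of \cref{thm:main1} I would extract an explicit constant $N_0$ such that every connected graph on more than $N_0$ vertices with smallest eigenvalue in $(-\las,-2)$ is an augmented path extension. By the definition of maverick graph, every maverick graph then has order at most $N_0$, reducing the problem to a finite search.

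Next I would use \texttt{nauty}'s \texttt{geng} to generate one representative of each isomorphism class of connected graph on $n \le N_0$ vertices, and for each candidate $G$ perform two tests: (i) whether $\lambda_{\min}(G) \in (-\las,-2)$, and (ii) whether $G$ is an augmented path extension of some rooted graph. For (i) I would work symbolically throughout: compute $\chi_G \in \mathbb{Z}[x]$, then apply Sturm sequences (or equivalently resultant computations with the minimal polynomial of $\las$, which has degree $6$) to rigorously count the roots of $\chi_G$ in $[-\las,-2]$. This certified arithmetic is essential because $-\las$ is an algebraic number of degree $6$ sitting at the boundary, and floating-point comparisons cannot distinguish eigenvalues just inside from just outside. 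For (ii), any augmented path extension contains a pendant triangle \etwo\ attached to the remainder through an induced path $v_0v_1\dots v_\ell$; the test therefore reduces to a polynomial-time combinatorial search for such a decomposition, namely scanning over choices of terminal triangle and maximal attaching path.

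To keep the enumeration tractable I would prune by Cauchy interlacing: whenever a partially constructed graph $G$ already contains an induced subgraph $H$ with $\lambda_{\min}(H) \le -\las$, the entire extension tree below $G$ is discarded. The minimal forbidden subgraphs arising in \cite{JP24} and in the proof of \cref{thm:main1} provide a ready-made short-circuit for many branches. The statistics of \(4752\) maverick graphs distributed across orders \(9\) through \(19\) are then read off from the output of this filtered enumeration.

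The principal obstacle is the effective size of $N_0$. If the bound extracted from the proof of \cref{thm:main1} is substantially larger than $19$, then enumerating every connected graph up to order $N_0$ is computationally prohibitive, and success depends on the branch-and-bound pruning above being aggressive enough, or on sharpening \cref{thm:main1} quantitatively so that the search can be confined to, say, $n\le 20$. A secondary concern is ensuring the exact arithmetic implementation is trustworthy: the two tests should be implemented and re-run independently, and a handful of boundary eigenvalue certificates should be verified by hand to rule out any software bug in the certified comparisons against $-\las$.
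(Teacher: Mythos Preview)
Your plan has a genuine gap, and you have already put your finger on it: the constant $N_0$ that can be extracted from the proof of \cref{thm:main1} is not small. That proof goes through \cref{lem:forb-extension}, which in turn invokes \cref{lem:forbid-extensions} (a Ramsey-type result from \cite{JP24}); the resulting bound is astronomical, not something like $20$. Running \texttt{geng} up to that order is hopeless, and your fallback of branch-and-bound pruning by Cauchy interlacing does not by itself bound the search either: arbitrarily long paths, and indeed every graph in $\G(2)$, survive the filter $\la_1(G) > -\las$, so a vertex-addition search with only that filter never terminates.

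The paper does not use any $N_0$ from \cref{thm:main1}. The missing idea is \cref{thm:min-forb-10} (Kumar, Rao and Singhi): every minimal forbidden subgraph for $\G(2)$ has at most $10$ vertices, so every maverick graph $M$ contains a connected $10$-vertex induced subgraph $G_{10}$ with $\la_1(G_{10}) < -2$. The enumeration therefore grows connected graphs one vertex at a time from $K_2$, keeping only those with $\la_1 > -\las$, and at order $10$ additionally throws away everything with $\la_1 \ge -2$. A further combinatorial filter at order $11$ based on uniqueness of the augmented-path-extension witness (\cref{lem:gen-mav}) prunes branches that can only lead to augmented path extensions. With these two filters the search is simply observed to die out: \texttt{dict[20]} is empty, and the bound $19$ on the order of a maverick graph is an \emph{output} of the computation, not an input. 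Your proposal needs exactly this Kumar--Rao--Singhi seeding step (and the accompanying level-$11$ filter) to become a finite and feasible search.
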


We need the following technical result on the generation of maverick graphs.

\begin{definition}[Witness]
    Given a graph $G$, a quadruple $(u_0,u_1,u_2,u_c)$ of distinct vertices is a \emph{witness for an augmented path extension} if $u_0u_1$, $u_1u_2$ and $u_0u_c$ are the only edges of $G$ that are not in $G - \sset{u_1,u_2,u_c}$.
\end{definition}

\begin{remark}
    As the name suggests, a graph $G$ has a witness for an augmented path extension if and only if $G$ is an augmented path extension of a rooted graph.
\end{remark}

\begin{lemma} \label{lem:gen-mav}
    For every maverick graph $M$ on $n$ vertices, there exists a sequence $K_2 = G_2, \dots, G_n = M$ of connected graphs satisfying the following three properties.
    \begin{enumerate}[label=(\roman*)]
        \item For every $i \in \sset{3,\dots,n}$, $\la_1(G_i) > -\las$, and there exists $v_i \in V(G_i)$ such that $G_{i-1} = G_i - v_i$.
        \item If $n \ge 10$, then $\la_1(G_{10}) < -2$. \label{item:res-1}
        \item If $n \ge 11$ and $G_{10}$ has a unique witness for an augmented path extension, then $G_{11}$ is not an augmented path extension. \label{item:res-2}
    \end{enumerate}
\end{lemma}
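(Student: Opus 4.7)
The plan is to build the sequence $G_2,\dots,G_n$ by carefully ordering the vertices of $M$. Condition (i) is nearly automatic: start with $G_2$ equal to any edge of $M$, and at each step $i\ge 3$ pick $v_i$ to be any vertex of $V(M)\setminus V(G_{i-1})$ with a neighbor in $V(G_{i-1})$; such a vertex exists because $M$ is connected, so every $G_i$ is connected. Cauchy interlacing then gives $\la_1(G_i)\ge\la_1(M)>-\las$ for free, since each $G_i$ is an induced subgraph of $M$. All the real work lies in refining the ordering so that (ii) and (iii) also hold.

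For condition (ii), the strategy is to locate a connected induced subgraph $F\subseteq M$ of minimum order among those with $\la_1(F)<-2$. Because every induced subgraph of $M$ inherits $\la_1>-\las$ by interlacing, $F$ itself lies in $\G(\las)\setminus\G(2)$. If we can ensure $|F|\le 10$, then ordering the vertices of $F$ first by a BFS from an edge of $F$ and extending to the rest of $V(M)$ as in (i) gives $G_{|F|}=F$, and interlacing yields $\la_1(G_{10})\le\la_1(F)<-2$.

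For condition (iii), suppose $G_{10}$ has a unique witness $(u_0,u_1,u_2,u_c)$. Since $M$ is maverick, $(u_0,u_1,u_2,u_c)$ cannot be a witness of $M$, so $M$ must contain at least one edge incident to $\{u_1,u_2,u_c\}$ other than $u_0u_1,u_1u_2,u_0u_c$. Such an edge is $vu_i$ for some $v\in V(M)\setminus V(G_{10})$ and $i\in\{1,2,c\}$, and choosing $v_{11}=v$ preserves connectivity while immediately destroying the unique witness in $G_{11}$. To argue that no \emph{new} witness is created, I would run a finite case analysis on which of the four roles $u_0',u_1',u_2',u_c'$ the vertex $v_{11}$ could play; each role imposes rigid local adjacency constraints on $v_{11}$ inside $G_{10}$, and by varying the choice of $v$ among all candidate disrupting vertices at least one choice should avoid every such configuration.

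The main obstacle is the order bound claim in (ii): showing that the minimum-order connected induced subgraph of $M$ with $\la_1<-2$ always has at most ten vertices. Since $|M|\le 19$ by \cref{thm:main3}, this is a finite assertion, and the natural route is to combine the classification in \cref{thm:main2} with an inspection of the members of $\G(\las)\setminus\G(2)$ of order at most ten; equivalently, one shows that any connected graph in $\G(\las)\setminus\G(2)$ on eleven or more vertices admits a vertex whose removal leaves a connected graph still in $\G(\las)\setminus\G(2)$, so that one can iteratively shrink down to order ten. A secondary difficulty is coordinating (ii) with (iii): once $G_{10}$ is fixed to satisfy (ii), the supply of neighbors of $u_1,u_2,u_c$ in $V(M)\setminus V(G_{10})$ available as $v_{11}$ may be constrained, and the slack needed likely comes from non-uniqueness of the minimum $F$ in (ii), allowing us to choose $F$ and $v_{11}$ together rather than sequentially.
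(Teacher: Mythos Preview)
Your overall strategy for (i) and (iii) matches the paper's, but there is a genuine gap in (ii), and part of your proposed fix is circular.

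For (ii), you propose to bound $|F|\le 10$ either by invoking \cref{thm:main3} (that maverick graphs have at most $19$ vertices) or by proving a ``shrinkability'' statement for connected graphs in $\G(\las)\setminus\G(2)$. The first route is circular: \cref{thm:main3} is the enumeration of maverick graphs, and its proof \emph{uses} \cref{lem:gen-mav} to justify the computer search that produces the statistics. The second route is not fleshed out and amounts to reproving a known classical result. The paper instead invokes \cref{thm:min-forb-10} (Kumar, Rao, and Singhi): every minimal forbidden subgraph for $\G(2)$ has at most $10$ vertices. Since $\la_1(M)<-2$, $M$ contains a minimal such subgraph $F$, which is automatically connected and has $|F|\le 10$; grow $F$ inside $M$ one vertex at a time to obtain a connected $G_{10}$, which still satisfies $\la_1(G_{10})\le\la_1(F)<-2$ by interlacing. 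This is the missing ingredient.

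For (iii), you are on the right track but too tentative. You do not need to ``vary the choice of $v$'' or exploit slack in the choice of $F$: any $v_{11}\in V(M)\setminus V(G_{10})$ adjacent to one of $u_1,u_2,u_c$ works. The paper argues that if $G_{11}$ had a witness $(u_0',u_1',u_2',u_c')$, then the degree constraints force $u_0'=u_0$, $u_1'=u_c$, $u_2'=v_{11}$, and $u_c'$ is a leaf of $G_{10}$ adjacent to $u_0$ with $u_c'\notin\{u_1,u_2,u_c\}$; but then $(u_0,u_1,u_2,u_c')$ is a second witness for $G_{10}$, contradicting uniqueness. So the ``secondary difficulty'' you raise about coordinating (ii) and (iii) does not arise.
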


The proof requires the following fact about graphs with smallest eigenvalue less than $-2$ that are minimal under subgraphs.

\begin{theorem}[Kumar, Rao, and Singhi~\cite{KRS82}] \label{thm:min-forb-10}
    Every minimal forbidden subgraph for the family $\G(2)$ of graphs with smallest eigenvalue at least $-2$ has at most 10 vertices. \qed
\end{theorem}

\begin{proof}[Proof of \cref{lem:gen-mav}]
    Suppose $M$ is a maverick graph on $n$ vertices. Since $M$ is connected, $M$ is not a null graph, and $\la_1(M) > -\las$, the case where $n \le 9$ is trivial. Hereafter, we may assume that $n \ge 10$. \cref{thm:min-forb-10} provides a connected subgraph $G_{10}$ of $M$ on $10$ vertices such that $\la_1(G_{10}) < -2$. We can then easily build the other connected graphs in the sequence using $G_{10}$.

    In the case where $n \ge 11$ and $G_{10}$ has a unique witness, say $(u_0,u_1,u_2,u_c)$, of an augmented path extension, since $M$ is not an augmented path extension, there exists a vertex $v_{11} \in V(M) \setminus V(G_{10})$ that is adjacent to at least one of $u_1$, $u_2$ and $u_c$ in $M$. We can specifically choose $G_{11} = M[V(G_{10}) \cup \sset{v_{11}}]$. Assume for the sake of contradiction that $G_{11}$ has a witness $(u_0',u_1',u_2',u_c')$. It must be the case that $u_0'=u_0$, $u_1'=u_c$, $u_2'=v_{11}$, $u_c' \not\in \sset{u_0,u_1,u_2,u_c}$, and $u_0u_1$, $u_1u_2$, $u_0u_c$, $u_cv_{11}$ and $u_0u_c'$ are the only edges of $G_{11}$ that are not in $G_{11} - \sset{u_1, u_2, u_c, u_c'}$. Thus $G_{10}$ has another witness $(u_0,u_1,u_2,u_c')$, which contradicts the uniqueness of the witness $(u_0,u_1,u_2,u_c)$ for $G_{10}$.
\end{proof}

The enumeration of all maverick graphs is achieved by a computer search.

\begin{cproof}[Proof of \cref{thm:main3}]
    To enumerate the maverick graphs, we store the graphs that can possibly occur in a sequence described by \cref{lem:gen-mav} in the hash table \texttt{dict} based on their order. At the start, \texttt{dict[2]} consists of the single graph $K_2$, and the counter \texttt{n} is incremented from \texttt{2}.
    
    Whenever \texttt{dict[n]} is nonempty, we iterate through members of \texttt{dict[n]}. For each $G$ in \texttt{dict[n]}, we carry out the following five steps.

    \begin{enumerate}[label=(\roman*)]
        \item We connect a new vertex to a nonempty vertex subset $S$ of $G$ in every possible way to obtain new graphs $G' = (G_S, 0)$. See \cref{subsec:6} for a more efficient implementation.
        \item We store those $G'$ with $\la_1(G') > -\las$ in a temporary array \texttt{candidates}.
        \item In view of \cref{lem:gen-mav}\ref{item:res-1}, when \texttt{n = 9}, we remove $G'$ from \texttt{candidates} when $\la_1(G') < -2$.
        \item In view of \cref{lem:gen-mav}\ref{item:res-2}, when \texttt{n = 10}, we remove $G'$ from \texttt{candidates} when $G$ has a unique witness for an augmented path extension, and $G'$ is an augmented path extension (cf.\ \cref{subsec:5}).
        \item We merge \texttt{candidates} into \texttt{dict[n+1]} (cf.\ \cref{subsec:4}).
    \end{enumerate}

    We append the maverick graphs among the members in \texttt{dict[n]} to the array \texttt{mavericks}. To select the maverick graphs, we reject members of \texttt{dict[n]} with smallest eigenvalue at least $-2$ for \texttt{n} from \texttt{2} to \texttt{9}, and we always reject augmented path extensions in \texttt{dict[n]}.

    Our program terminates at \texttt{n = 20} because \texttt{dict[20]} is empty. Our code and its output are available as the ancillary files \texttt{enum\_maverick\_graphs.rb} and \texttt{enum\_maverick\_graphs.txt} in the arXiv version of the paper.
\end{cproof}

\begin{remark}
    On a MacBook Pro equipped with an Apple M1 Pro chip and 16 GB of memory, the program initially completed its task in under 25 minutes. With the assistance of ChatGPT-4, we rewrote the code in Julia, a high-performance dynamic language, reducing runtime to under 8 minutes on the same machine. The code is available as \texttt{enum\_maverick\_graphs.jl} in the arXiv version of the paper.
\end{remark}

We reuse the techniques discussed in \cref{subsec:1,subsec:2} to check positive definiteness and positive semidefiniteness of matrices. We share additional details of our implementation below.

\subsection{Adding a new vertex} \label{subsec:6}

To accelerate the generation of graphs, we leverage on the computation done for $G$ in \texttt{dict[n]} based on the following observation. Suppose that $G$ is a graph such that $\la_1(G) > -\las$. Let $\mathcal{S} = \dset{S \subseteq V(G)}{\la_1(G_S, 0) > -\las}$. For every $S \in \mathcal{S}$, and every $U \subseteq V(G')$, where $G' = (G_S, 0)$, note that $\la_1(G'_U, 0) > -\las$ implies that $\la_1(G_{U \cap V(G)}, 0) > -\las$, which is equivalent to $U \cap V(G) \in \mathcal{S}$. In other words, when we connect a new vertex to a nonempty subset $U$ of $G' = (G_S, 0)$ with $S \in \mathcal{S}$, we only need to iterate through $U$ with $U \cap V(G) \in \mathcal{S}$.

To keep track of the set $\mathcal{S}$ defined for $G$, we add an attribute \texttt{@possible\_subsets}, a list of distinct nonempty vertex subsets of $G$, to each graph $G$. For the graph $K_2$ in \texttt{dict[2]}, its \texttt{@possible\_subsets} consists of all the nonempty vertex subsets of $K_2$. For each $G$ in \texttt{dict[n]}, instead of connecting a new vertex, say $v'$, to a nonempty vertex subset $S$ of $G$ in every possible way, we connect $v'$ to $S$ for every $S$ in \texttt{@possible\_subsets} of $G$. To obtain \texttt{@possible\_subsets} of the graphs obtained from $G$, we initially set \texttt{new\_possible\_subsets} to be the list with a single vertex subset $\sset{v'}$. For each $S$ in \texttt{@possible\_subsets} of $G$, check whether $\la_1(G_S, 0) > -\las$, and if so, we append both $S$ and $S \cup \sset{v'}$ to \texttt{new\_possible\_subsets}, and we append $(G_S, 0)$ to \texttt{candidates}. Finally, we set \texttt{@possible\_subsets} of each graph $G'$ in \texttt{candidates} to \texttt{new\_possible\_subsets}.

\subsection{Finding witnesses} \label{subsec:5}

Given a graph $G$, to find its witnesses for an augmented path extension, we iterate through all edges $u_1u_2$ of $G$ with $d(u_1) = 2$ and $d(u_2) = 1$, check whether the vertex $u_0$, that is, the other neighbor of $u_1$, is adjacent to a leaf $u_c$, and we output all quadruples $(u_0, u_1, u_2, u_c)$.

\subsection{Hash function of graphs and generalized degrees} \label{subsec:4}

When we add a graph $G$ in \texttt{candidates} to \texttt{dict[n+1]}, we need to check whether it is isomorphic to an existing member of \texttt{dict[n+1]}. To that end, we maintain a hash table \texttt{@hash} of existing members of \texttt{dict[n+1]} using the following hash function.

For a vertex $v$ of a graph $G$, the \emph{generalized degree} of $v$ in $G$ is the pair \texttt{[dv, dw]}, where \texttt{dv} is the degree of $v$ in $G$, and \texttt{dw} is the number of edges in the subgraph of $G$ induced by the neighbors of $v$. The hash value of a graph $G$ is then the sorted sequence of generalized degrees. Clearly, when two graphs are isomorphic, their hash values are equal, and moreover, the generalized degree of a vertex is preserved under isomorphism.

This allows us to test isomorphism between $G$ and any existing member of \texttt{dict[n+1]} by examining only the members of \texttt{@hash[hv]}, where \texttt{hv} is the hash value of $G$. In addition, when we attempt to build an isomorphism between $G$ and a member $G'$ of \texttt{@hash[hv]}, we only map a vertex of $G$ to a vertex of $G'$ that has the same generalized degree.

\section{Twisted maverick graphs} \label{sec:twisted}

A visual examination of the maverick graphs reveals that a notable portion of them appear structurally similar.

\begin{definition}[Twisted path extension and twisted maverick graph]
    Given a rooted graph $F_R$ and $\ell \in \N$, the \emph{twisted path extension} of the rooted graph $F_R$ is the path augmentation $\tpe{F_R, \ell}$. See \cref{fig:twisted} for a schematic drawing. Given a graph $G$, a quadruple $(u_0,u_1,u_2,u_c)$ of distinct vertices is a \emph{witness for a twisted path extension} if $u_0u_1$, $u_0u_2$, $u_1u_2$ and $u_0u_c$ are the only edges of $G$ that are not in $G - \sset{u_1,u_2,u_c}$. A maverick graph is \emph{twisted} if it is a twisted path extension of a rooted graph.
\end{definition}

\begin{figure}
    \centering
    \begin{tikzpicture}[very thick, scale=0.5, baseline=(v.base)]
        \coordinate (v) at (0,0);
        \draw[rounded corners=14pt] (-7.8, -1) rectangle (-4, 1) {};
        \fill[darkgray] (-3,0) -- (-4.68,0.733) -- (-4.68,-0.733) -- cycle;
        \fill[darkgray] (-5, 0) circle (0.8);
        \draw[text=white] (-5,0) node{$R$};
        \draw (-7,0) node{$F$};
        \draw (3,1) node[root-vertex]{} -- (3,0);
        \draw[dashed] (-0.75,0) -- (1,0);
        \draw (-3,0) node[vertex]{} -- (-2,0) node[vertex]{} -- (-1,0) node[vertex]{};
        \draw (1,0) node[vertex]{} -- (2,0) node[vertex]{} -- (3,0) node[vertex]{} -- (4,0) node[root-vertex]{} -- (4,1) node[root-vertex]{} -- (3,0);
        \draw [braket] (4,0.2) -- (-3,0.2) node [black,midway,yshift=-15pt] {\footnotesize $\ell + 2$};
    \end{tikzpicture}
    \caption[The twisted path extension.]{The twisted path extension $\tpe{F_R, \ell}$.} \label{fig:twisted}
\end{figure}

A direct computer screening of the maverick graphs produced in \cref{sec:maverick} reveals that roughly a quarter of them are twisted.

\begin{theorem} \label{thm:main4}
    There are a total of 1161 twisted maverick graphs with the following statistics.
    \begin{center}
        \begin{tabular}{ccccccccccc}
            order  & 10 & 11 & 12 & 13 & 14 & 15 & 16 & 17 & 18 & 19 \\
            \hline
            \# & 48 & 133 & 220 & 236 & 210 & 162 & 96 & 40 & 13 & 3
        \end{tabular}
    \end{center}
\end{theorem}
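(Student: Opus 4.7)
The plan is to carry out a direct computer screening of the 4752 maverick graphs already enumerated in the proof of \cref{thm:main3}. Since being a twisted maverick is a purely combinatorial property (a maverick graph is twisted if and only if it admits a witness for a twisted path extension), once the full list of mavericks is in hand, the verification is a finite check graph by graph. I would read back the output file \texttt{enum\_maverick\_graphs.txt} produced in \cref{sec:maverick}, iterate through each graph $M$, and test whether some quadruple $(u_0,u_1,u_2,u_c)$ of distinct vertices forms a witness.

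To search for witnesses efficiently, I would adapt the subroutine of \cref{subsec:5}. The defining condition forces $u_1$ and $u_2$ to be twin vertices of degree $2$ whose common neighborhood is $\{u_0\}\cup\{u_2\}$ (respectively $\{u_0\}\cup\{u_1\}$), and $u_c$ to be a leaf adjacent to $u_0$. So for each edge $u_1u_2$ of $M$, I would check that both $u_1$ and $u_2$ have degree exactly $2$ and a common neighbor $u_0$ distinct from both, and then scan the neighbors of $u_0$ for a leaf $u_c\notin\{u_1,u_2\}$. The total work is linear in the size of the graph per witness candidate, so the entire sweep over all $4752$ mavericks is negligible compared with the original enumeration. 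For each order $n\in\{10,\dots,19\}$, I tally the mavericks that pass the test and compare the resulting counts to the table in the statement.

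The only genuine difficulty is bookkeeping rather than mathematics: one must be careful that the witness test exactly mirrors the definition (including the requirement that $u_0u_1,u_0u_2,u_1u_2,u_0u_c$ are the \emph{only} edges of $G$ not present in $G-\{u_1,u_2,u_c\}$, i.e.\ that $u_1,u_2,u_c$ have no other neighbors in $M$), since a sloppy test could conflate twisted path extensions with other local patterns. Having written the check, I would package the code as an ancillary file analogous to \texttt{enum\_maverick\_graphs.rb}, rerun it on the saved list of mavericks, and output the order-by-order counts, yielding the statistics in the theorem with the overall total of $1161$. This produces a \computer-style computer-assisted proof consistent with the rest of the paper.
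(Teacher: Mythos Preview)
Your proposal is correct and matches the paper's own proof essentially line for line: the paper also iterates over the 4752 mavericks from \cref{thm:main3}, and for each one searches for a witness by scanning edges $u_1u_2$ with $d(u_1)=d(u_2)=2$, checking for a common neighbor $u_0$, and then looking for a leaf $u_c$ adjacent to $u_0$. The only extra remark in the paper is the incidental observation that every twisted maverick turns out to have a \emph{unique} witness, which is not needed for the count.
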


\begin{cproof}
    For each maverick graph, we find its witnesses for a twisted path extension as follows: we iterate through all edges $u_1u_2$ of $G$ with $d(u_1) = d(u_2) = 2$, check whether $u_1$ and $u_2$ share a common neighbor, if so, check whether the common neighbor, say $u_0$, is adjacent to a leaf $u_c$, and output all quadruples $(u_0, u_1, u_2, u_c)$. Once a witness is found for a maverick graph, we assert that it is twisted. As it turns out, every maverick graph that is twisted has a unique witness. Our code and its output are available as the ancillary files \texttt{enum\_twisted\_mavericks.rb} and \texttt{enum\_twisted\_mavericks.txt} in the arXiv version of the paper.
\end{cproof}

For the proof of \cref{cor:main}, we need the following connection between twisted path extensions and augmented path extensions.

\begin{proposition} \label{lem:tpe-vs-ape}
    For every rooted graph $F_R$ and $\ell \in \N$, if $\la_1\tpe{F_R, \ell} \le -2$, then $\la_1\tpe{F_R, \ell} \le \la_1\ape{F_R, \ell}$.
\end{proposition}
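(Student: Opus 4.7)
The plan is to apply the Rayleigh principle: starting from an eigenvector $\bx$ of $\ape{F_R, \ell}$ associated with $-\mu := \la_1\ape{F_R, \ell}$, I would construct a test vector $\tbx$ on $\tpe{F_R, \ell}$ whose Rayleigh quotient is at most $-\mu$. First I would dispense with the easy case $\la_1\ape{F_R, \ell} \ge -2$: the hypothesis $\la_1\tpe{F_R, \ell} \le -2$ then already yields $\la_1\tpe{F_R, \ell} \le -2 \le \la_1\ape{F_R, \ell}$. So I may assume $\mu \ge 2$.

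Let $V_0 := V(F) \cup \sset{v_0, \ldots, v_\ell}$ denote the common part of the two extensions. Beyond $V_0$, the graph $\ape{F_R, \ell}$ has a pendant leaf root $a$ adjacent only to $v_\ell$, a root $b$ adjacent to $v_\ell$, and a non-root leaf $c$ adjacent only to $b$; the graph $\tpe{F_R, \ell}$ has a pendant leaf root $a'$ adjacent only to $v_\ell$ together with two roots $b', c'$ each adjacent to $v_\ell$ and to each other. Writing $t := x_{v_\ell}$, the eigenvector equations of $\ape{F_R, \ell}$ at $a, b, c$ force $x_a = -t/\mu$, $x_b = -\mu t/(\mu^2-1)$, and $x_c = t/(\mu^2-1)$.

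I would then define $\tbx \colon V\tpe{F_R, \ell} \to \R$ by copying $\bx$ on $V_0$ and setting $\tilde{x}_{a'} = -t/\mu$, $\tilde{x}_{b'} = \tilde{x}_{c'} = -t/(\mu+1)$, which are exactly the values that the eigenvector equations of $\tpe$ at $a', b', c'$ would prescribe. Because $\tbx$ agrees with $\bx$ on $V_0$, the difference of quadratic forms localizes to the attached vertices, and the direct algebraic calculation I would carry out yields the identity
\[
\tbx^\T (A_{\tpe{F_R, \ell}} + \mu I)\tbx = -\frac{(\mu-2) t^2}{\mu^2 - 1},
\]
which is $\le 0$ for every $\mu \ge 2$. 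When $\tbx \ne 0$, the Rayleigh principle then gives $\la_1\tpe{F_R, \ell} \le -\mu = \la_1\ape{F_R, \ell}$.

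The main obstacle is the degenerate case $\tbx = 0$. This forces $t = 0$, and a short induction along the eigenvector equations at $v_\ell, v_{\ell-1}, \ldots, v_0$ then forces $\bx$ to vanish on $V_0 \cup \sset{a, b, c}$ with $\sum_{r \in R} x_r = 0$, leaving $\bx$ supported on $V(F)$. Extending $\bx|_{V(F)}$ by zero to $V\tpe{F_R, \ell}$ then gives a nonzero eigenvector of $\tpe{F_R, \ell}$ with eigenvalue $-\mu$, again yielding the conclusion. The key algebraic identity factors with $(\mu-2)$ as a root, explaining why $-2$ is the natural threshold in the hypothesis.
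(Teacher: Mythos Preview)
Your argument is correct, and in fact cleaner than you realise: the degenerate case $\tbx=0$ simply cannot occur. By construction $\tbx$ agrees with $\bx$ on all of $V_0=V(F)\cup\{v_0,\dots,v_\ell\}$, so $\tbx=0$ forces $\bx|_{V_0}=0$; in particular $t=0$, whence $x_a=x_b=x_c=0$ as well, giving $\bx=0$, a contradiction. Your handling of that case is therefore unnecessary (and slightly garbled, since $V(F)\subset V_0$ means ``$\bx$ supported on $V(F)$'' would also be the zero vector).

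The paper takes a different but closely related route. Instead of building a test vector, it computes the Schur complement of the three attached vertices in each of $A_{\tpesub{F_R,\ell}}+\la I$ and $A_{\apesub{F_R,\ell}}+\la I$, obtaining in both cases the matrix $A_{(F_R,\ell)}+\la I$ minus a rank-one correction $cE_{v_\ell,v_\ell}$, with $c=\tfrac{2}{\la+1}+\tfrac{1}{\la}$ for $\tpe{\cdot}$ and $c=\tfrac{\la}{\la^2-1}+\tfrac{1}{\la}$ for $\ape{\cdot}$. It then sets $\la=-\la_1\tpe{F_R,\ell}\ge 2$ (using the hypothesis directly) and observes that the difference of the two Schur complements is $\tfrac{\la-2}{\la^2-1}E_{v_\ell,v_\ell}\succeq 0$. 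Your explicit choice $\tilde{x}_{a'}=-t/\mu$, $\tilde{x}_{b'}=\tilde{x}_{c'}=-t/(\mu+1)$ is exactly the minimiser implicit in the Schur complement, so unsurprisingly the same factor $(\mu-2)/(\mu^2-1)$ appears in your identity. The paper's version is more in line with the framework of \cref{sec:lin-alg}; yours is more elementary and self-contained, at the cost of a longer computation.
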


\begin{proof}
    Let $\la > 1$, to be chosen later, and let $v_0$ be the vertex in $V(F_R,\ell)\setminus V(F_R,\ell-1)$. In the case where $\ell = 0$, instead let $v_0$ be the vertex in $V(F_R,\ell)\setminus V(F_R)$. Denote by $E_{v_0,v_0}$ the matrix unit where the $(v_0,v_0)$-entry with value $1$ is the only nonzero entry.

    We claim that $\la_1\tpe{F_R, \ell} \ge -\la$ if and only if
    \begin{equation} \label{eqn:matrix-1}
        A_{(F_R,\ell)} + \la I - \left(\frac{2\la-2}{\la^2-1}+\frac{1}{\la}\right)E_{v_0,v_0}
    \end{equation} is positive semidefinite. Indeed, we partition the matrix $A_{\tpesub{F_R,\ell}}+\la I$ into blocks:
    \[\begin{pmatrix}
        A_{(F_R, \ell)} & B \\
        B^\T & C
    \end{pmatrix}\]
    Since $C = A_{\etwographsub} + \la I$ and $\la > 1$, the block $C$ is positive definite. Therefore, the Schur complement $A_{(F_R, \ell)} + \la I - BC^{-1}B^\T$ of $C$ is positive semidefinite. Since the only nonzero row of $B$ is its $v_0$-th row, say $B_{v_0}$, the matrix $BC^{-1}B^\T$ simplifies to $B_{v_0}C^{-1}B_{v_0}^\T E_{v_0,v_0}$. We then compute directly:
    \[
        B_{v_0}C^{-1}B_{v_0}^\T = \begin{pmatrix}
            1 \\ 1 \\ 1
        \end{pmatrix}^\T
        \begin{pmatrix}
            \la & 1 & \\
            1 & \la & \\
            & & \la
        \end{pmatrix}^{-1}
        \begin{pmatrix}
            1 \\ 1 \\ 1
        \end{pmatrix}
        =\frac{2\la-2}{\la^2-1}+\frac{1}{\la}.
    \]
    Similarly, we can prove that $\la_1\ape{F_R,\ell} \ge -\la$ if and only if
    \begin{equation} \label{eqn:matrix-2}
        A_{(F_R,\ell)} + \la I - \left(\frac{\la}{\la^2-1}+\frac{1}{\la}\right) E_{v_0,v_0}
    \end{equation}
    is positive semidefinite. Finally, take $\la = -\la_1\tpe{F_R,\ell} \ge 2$, and observe that the matrix in \eqref{eqn:matrix-2} minus the matrix in \eqref{eqn:matrix-1} is equal to the matrix $(\la-2)/(\la^2-1)E_{v_0,v_0}$, which is positive semidefinite.
\end{proof}

\begin{proof}[Proof of \cref{cor:main}]
    Suppose that $G$ is a connected graph on at least $18$ vertices such that $\la_1(G) \in (-\las, -2)$.
    In view of \cref{thm:main3,thm:main4}, $G$ is either an augmented path extension or a twisted maverick graph. We break the rest of the proof into two cases.
    
    \paragraph{Case 1:} $G$ is an augmented path extension. In view of \cref{thm:reduction,lem:char}, there exists a connected bipartite single-rooted graph $H_r$ and $\ell \in \N$ such that $G$ is $\ape{L(H_r),\ell}$. Let $(u_0, u_1, u_2, u_c)$ be the witness for the augmented path extension. Clearly $u_c$ is a leaf of $G$, and $G - u_c$ is the path extension $(L(H_r), \ell+2)$, which is the line graph of the bipartite graph $(H_r, \ell+3)$. We are left to prove the uniqueness of such a leaf. We claim that there exists a neighbor of $u_0$ in $G - \sset{u_1,u_2,u_c}$ that is not a leaf of $G$. Assume for the sake of contradiction that every neighbor of $u_0$ in $G - \sset{u_1,u_2,u_c}$ is a leaf of $G$. In this case, $\ell = 0$ and $F_R$ is a null graph with more than one vertex, which contradicts the connectedness of $H_r$. Let $u_{-1}$ be such a neighbor of $u_0$. Then $\sset{u_{-1},u_0,u_1,u_c}$ induces a star $S_3$ with $3$ leaves in $G$. Since a line graph cannot contain $S_3$ as a subgraph, and $u_c$ is the only leaf of $G$ in $\sset{u_{-1},u_0,u_1,u_c}$, we have to remove $u_c$ from $G$ to obtain a line graph.

    \paragraph{Case 2:} $G$ is a twisted maverick graph $\tpe{F_R, \ell}$. In view of \cref{lem:tpe-vs-ape}, $\la_1\ape{F_R, \ell} > -\las$. By \cref{thm:reduction,lem:char}, there exists a connected bipartite single-rooted graph $H_r$ such that $F_R$ is $L(H_r)$. Let $(u_0,u_1,u_2,u_c)$ be the witness for the twisted path extension. Clearly $u_c$ is a leaf of $G$, and $G - u_c$ is the graph $(L(H_r),\ell,\ktwo)$, which is the line graph of the bipartite graph $(H_r,\ell+1,\ktwoc)$. The proof for the uniqueness of such a leaf follows exactly that of the previous case.
\end{proof}

\begin{remark}
    The order 18 in \cref{cor:main} is the smallest possible because of the two maverick graphs of order 17 that are not twisted --- each has a unique leaf, and its removal results in a graph that contains the star $S_3$ as a subgraph.
\end{remark}

\section{Beyond the classification theorem of \texorpdfstring{$\G(\las)\setminus \G(2)$}{G(λ*)\G(2)}} \label{sec:beyond}

In this section, we briefly explore graphs whose smallest eigenvalues lie slightly beyond $-\las$. As it turns out, \cref{thm:main1} extends, whereas \cref{thm:main2} does not.

\begin{theorem} \label{thm:beyond-las-yes}
    There exist $\la > \las$ and $N \in \N$ such that for every connected graph $G$ on more than $N$ vertices, if the smallest eigenvalue of $G$ is in $(-\la, -2)$, then $G$ is isomorphic to an augmented path extension of a rooted graph.
\end{theorem}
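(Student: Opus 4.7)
The plan is to extend \cref{thm:main1} by showing that its proof is robust to a small perturbation of the threshold $\las$. The proof of \cref{thm:main1} (using the forbidden subgraph techniques from \cite{JP24}) is expected to proceed by identifying a finite family $\HH$ of ``obstruction'' subgraphs and arguing that any sufficiently large connected graph $G$ that avoids every $H \in \HH$ as an induced subgraph and satisfies $\mu_{\min}(G) < -2$ must be an augmented path extension. Since each obstruction $H \in \HH$ must fail to sit inside any graph in $\G(\las)$, we have $\mu_{\min}(H) < -\las$ for all $H \in \HH$.

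The first step is to exploit that $\HH$ is finite. Set
\[
    \mu^* := \max\dset{\mu_{\min}(H)}{H \in \HH}.
\]
Since $\HH$ is finite and every term is strictly less than $-\las$, the maximum $\mu^*$ is itself strictly less than $-\las$. Choose any $\la \in (\las, -\mu^*)$; then for every $H \in \HH$ we have $\mu_{\min}(H) \le \mu^* < -\la$. Consequently, if $G$ is any graph with $\mu_{\min}(G) > -\la$, interlacing forces $G$ to avoid every $H \in \HH$ as an induced subgraph.

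The second step is to feed such a $G$ back into the structural portion of the proof of \cref{thm:main1}. Provided the structural argument only uses that $G$ is connected, large, satisfies $\mu_{\min}(G) < -2$, and avoids the finite list $\HH$---\emph{not} the sharper hypothesis $\mu_{\min}(G) > -\las$ itself---we conclude that $G$ is an augmented path extension. This yields the desired $\la > \las$ and (after possibly enlarging $N$) the conclusion of \cref{thm:beyond-las-yes}.

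The main obstacle is verifying that the proof of \cref{thm:main1} indeed factors through a finite obstruction list in the way described, rather than invoking the value $\las$ in a way that is not stable under perturbation. If some step uses $\las$ quantitatively (for instance, in a spectral estimate relating subgraph eigenvalues to the path-like structure), we would need to track the slack in that estimate and replace $\las$ by any $\la$ with $\la - \las$ smaller than that slack. A fallback is a compactness argument: assume the conclusion fails for every $\la > \las$ and every $N$, extract a sequence of counterexamples $G_n$ with $\mu_{\min}(G_n) \in (-\la_n, -2)$, $\la_n \downarrow \las$, and $|V(G_n)| \to \infty$, and use the Jiang--Polyanskii machinery applied locally around a fixed induced subgraph witnessing $\mu_{\min} < -2$ to obtain an augmented-path-extension structure in the limit, contradicting the assumption that none of the $G_n$ is an augmented path extension.
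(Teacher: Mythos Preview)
Your high-level plan matches the paper's: both arguments boil down to observing that the proof of \cref{thm:main1} only uses the value $\las$ through finitely many strict inequalities, and then exploiting the slack.  Where you leave the key step as an ``obstacle to verify,'' the paper simply carries out the verification: \cref{lem:seven-eight-vertex} already uses the constant $-95/47$ in place of $-\las$, and \cref{lem:computation} is upgraded by \cref{lem:computation-pro} to the same constant, with the single exceptional rooted graph yielding exactly $-\la'$.  One then reruns the proof of \cref{thm:main1} verbatim with any $\la\in(\las,\la')$ in place of $\las$.  So there is no need for a fallback compactness argument; the slack is explicit.

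One point in your write-up is not correct as stated.  You claim that each obstruction $H\in\HH$ ``must fail to sit inside any graph in $\G(\las)$'' and hence $\la_1(H)<-\las$.  This does not follow from the role of $\HH$ alone: the structural conclusion of \cref{thm:main1} only requires that large connected graphs in $\G(\las)\setminus\G(2)$ avoid $\HH$, which says nothing about $\la_1(H)$ a priori.  What actually gives $\la_1(H)<-\las$ is the specific mechanism of \cref{lem:forb-extension}: the graphs one ends up forbidding are the members of the extension families $\X(F,\ell,m)$ together with a star $S_k$, and each of these is \emph{constructed} in the proof of \cref{lem:forb-extension} to have smallest eigenvalue strictly below $-\las$.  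Once you inspect that proof, your $\mu^*$ argument goes through; but the inequality comes from the construction, not from an abstract property of obstruction lists.  The paper sidesteps this by going one level up: rather than bounding the eigenvalues of the finitely many members of $\X(F,\ell,m)$, it bounds the limits $\lim_{\ell\to\infty}\la_1(F_R,\ell)$ directly (these are the inputs to \cref{lem:forb-extension}), which in addition yields the optimal threshold $\la'$.
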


\begin{theorem} \label{thm:beyond-las-no}
    For every $\la > \las$, every finite family $\F$ of rooted graphs, and every $N \in \N$, there exists a connected graph $G$ on more than $N$ vertices such that the smallest eigenvalue of $G$ is in $(-\la, -\las)$, and $G$ is not an augmented path extension of any rooted graph in $\F$.
\end{theorem}

To understand why \cref{thm:main1} generalizes beyond $-\las$, note that in its derivation in \cref{sec:forb}, the constant $\las$ plays an essential role only in \cref{lem:computation}. In \cite[Appendix~A]{JP25}, Jiang and Polyanskii already noted that $\las \approx 2.01980$ can be replaced with $101/50 = 2.02$ in \cref{lem:computation}. To obtain the best constant, we define the following graphs and compute the limit of their smallest eigenvalues.

\begin{proposition} \label{lem:lambda-prime}
    For every $n \in \N$ with $n \ge 4$, define the graph $E_n'$ as the path extension $(\ftwo, n-4)$ in \cref{fig:ten}. The smallest eigenvalue of $E_n'$ decreases to $-\la'$, where \[\la' := \ga + 1/\ga \approx 2.02124\] and $\ga$ is the unique positive root of $x^4 + x^3 = x^2 + 2$.
\end{proposition}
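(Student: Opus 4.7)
The strategy is to invoke \cref{lem:lim-smallest-ev} with $G_S = \ftwo$, since by definition $E_n' = (\ftwo, n)$. Writing $v_0$ for the vertex in $V(\ftwo, 0)\setminus V(\ftwo)$, the lemma characterizes $\la_\infty := -\lim_{n\to\infty}\la_1(E_n') \ge 2$ as the unique threshold at which the matrix
\[
M(x) := A_{(\ftwo, 0)} + xI - \bigl(x/2 - \sqrt{x^2/4-1}\bigr)\,E_{v_0,v_0}
\]
is singular and positive semidefinite on $[\la_\infty,\infty)$. The task is then to identify $\la_\infty$ with $\la'$.

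I will first write down $A_{(\ftwo,0)}$ explicitly: $v_0$ is adjacent to the three roots $a,b,c$ of $\ftwo$, and among these three only $a$ and $c$ are joined by an edge. Setting $t := x/2 - \sqrt{x^2/4-1}$, so that $x = t + 1/t$ and $xt = t^2 + 1$, I will expand $\det M(x)$ by cofactors along the row of the isolated root $b$ (which has only one nonzero off-diagonal entry). After simplification the expansion yields
\[
\det M(x) = (x-1)\bigl\{x(x-t)(x+1) - (3x+1)\bigr\}.
\]
Since $\la_\infty \ge 2 > 1$, the first factor is nonzero, so the singularity condition reduces to $x(x-t)(x+1) = 3x+1$.

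Next I make the substitution $\gamma := 1/t$, which satisfies $\gamma \ge 1$ and $x = \gamma + 1/\gamma$, so that $x - t = \gamma$. The determinant equation becomes $\gamma\,x(x+1) = 3x+1$; substituting $x = \gamma + 1/\gamma$, multiplying through by $\gamma$, and collecting terms yields exactly $\gamma^4 + \gamma^3 - \gamma^2 - 2 = 0$. Uniqueness of the positive root of $f(x) := x^4 + x^3 - x^2 - 2$ follows from $f(1) = -1 < 0 < 18 = f(2)$ together with $f'(x) = x(4x^2 + 3x - 2) > 0$ on $[1,\infty)$, which also places the root in $(1,2)$. Hence $\la_\infty = \gamma + 1/\gamma = \la'$.

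Finally, for monotonicity, I will observe that $E_n'$ is the induced subgraph of $E_{n+1}'$ obtained by deleting the terminal vertex of the path, so Cauchy interlacing gives $\la_1(E_{n+1}') \le \la_1(E_n')$. Combined with the limit computation above, this shows $\la_1(E_n')$ decreases to $-\la'$. The main obstacle is the cofactor expansion of $\det M(x)$ and the subsequent algebraic manipulation into the quartic; these are straightforward but demand care with signs and indexing.
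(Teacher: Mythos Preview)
Your proof is correct and follows essentially the same approach as the paper: both invoke \cref{lem:lim-smallest-ev}, compute the determinant of the resulting $4\times 4$ matrix, and substitute $x=\gamma+1/\gamma$ (your $\gamma=1/t$ coincides with the paper's $r=x/2+\sqrt{x^2/4-1}$ since $rt=1$) to arrive at the quartic $\gamma^4+\gamma^3-\gamma^2-2=0$. Your factoring out of $(x-1)$ before substituting is equivalent to the paper's factor $(r^2-r+1)$, since $x-1=(\gamma^2-\gamma+1)/\gamma$; and you add the Cauchy interlacing step for monotonicity, which the paper leaves implicit.
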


\begin{figure}
    \centering
    \begin{tikzpicture}[very thick, scale=0.3, baseline=(v.base)]
        \coordinate (v) at (0,0);
        \draw (3,0) -- +(0,2) node[vertex]{};
        \draw (1,2) node[vertex]{} -- (1,0) node[vertex]{} -- (3,0) node[vertex]{} -- cycle;
    \end{tikzpicture}\qquad
    \begin{tikzpicture}[very thick, scale=0.3, baseline=(v.base)]
        \coordinate (v) at (0,0);
        \draw (3,0) -- +(0,2) node[vertex]{};
        \draw (3,0) --  (5,0) node[vertex]{};
        \draw (1,2) node[vertex]{} -- (1,0) node[vertex]{} -- (3,0) node[vertex]{} -- cycle;
    \end{tikzpicture}\qquad
    \begin{tikzpicture}[very thick, scale=0.3, baseline=(v.base)]
        \coordinate (v) at (0,0);
        \draw (3,0) -- +(0,2) node[vertex]{};
        \draw (3,0) --  (5,0) node[vertex]{} -- (7,0) node[vertex]{};
        \draw (1,2) node[vertex]{} -- (1,0) node[vertex]{} -- (3,0) node[vertex]{} -- cycle;
    \end{tikzpicture}\qquad
    \begin{tikzpicture}[very thick, scale=0.3, baseline=(v.base)]
        \coordinate (v) at (0,0);
        \draw (3,0) -- +(0,2) node[vertex]{};
        \draw[dashed] (6.7,0) -- (11,0);
        \draw (3,0) -- (5,0) node[vertex]{} -- (7,0) node[vertex]{};
        \draw (1,2) node[vertex]{} -- (1,0) node[vertex]{} -- (3,0) node[vertex]{} -- cycle;
        \draw (11,0) node[vertex]{} -- (13,0) node[vertex]{} -- (15,0) node[vertex]{};
        \draw [braket] (15,0.1) -- (1,0.1) node [black,midway,yshift=-15pt] {\footnotesize $n-2$};
    \end{tikzpicture}
    \caption{$E_4'$, $E_5'$, $E_6'$ and $E_n'$.} \label{fig:ten}
\end{figure}

\begin{proof}
    Using \cref{lem:lim-smallest-ev}, we know that the matrix
    \[
        \begin{pmatrix}
            r & 1 & 1 & 1 \\
            1 & x & 1 & 0 \\
            1 & 1 & x & 0 \\
            1 & 0 & 0 & x
        \end{pmatrix}
    \]
    is singular when $x = \la'$, where $r = x/2 + \sqrt{x^2/4-1}$. Upon substituting $x = r + 1/r$, the determinant of the above matrix equals $r^4 - r^2 + 2r + 2/r - 2/r^2 - 3$, which factors into $(r^4 + r^3 - r^2 -2)(r^2-r+1)/r^2$. Since $\ga$ is the unique positive root of $r^4 + r^3 - r^2 - 2$, and no root of $r^2 - r + 1$ is real, it must be the case that $\la' = \ga + 1/\ga$.
\end{proof}

Besides $E_6$, the graph $E_6'$ is also one of the 31 minimal forbidden subgraphs for the family $\D_\infty$ of generalized line graphs. It turns out that $E_6'$ serves as the bottleneck case in \cref{lem:computation}.

\begin{proposition} \label{lem:computation-pro}
    For every minimal forbidden subgraph $F$ for the family $\D_\infty$ of generalized line graphs, if $F$ is not isomorphic to $E_6$, then $\lim_{\ell \to \infty}\la_1(F_R, \ell) < -95/47$ for every nonempty vertex subset $R$ of $F$, except when $F_R$ is the rooted graph $E_6'$ in \cref{fig:rooted-e2-prime}.
\end{proposition}

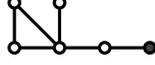
\begin{figure}[t]
    \centering
    \begin{tikzpicture}[very thick, scale=0.3, baseline=(v.base)]
        \coordinate (v) at (0,0);
        \draw (3,0) -- +(0,2) node[vertex]{};
        \draw (3,0) --  (5,0) node[vertex]{} -- (7,0) node[root-vertex]{};
        \draw (1,2) node[vertex]{} -- (1,0) node[vertex]{} -- (3,0) node[vertex]{} -- cycle;
    \end{tikzpicture}
    \caption{The rooted graph $E_6'$.} \label{fig:rooted-e2-prime}
\end{figure}

We postpone the computer-assisted proof of \cref{lem:computation-pro} to \cref{sec:app}. Now we prove \cref{thm:beyond-las-yes} for every $\la \in (\las, \la')$.

\begin{proof}[Proof of \cref{thm:beyond-las-yes}]
    Pick an arbitrary $\la \in (\las, \la')$. Since $\la' \approx 2.02124 < 2.02127 \approx 95/47$, by \cref{lem:lambda-prime,lem:computation-pro}, we may replace the constant $\las$ in \cref{lem:computation} with $\la$. The rest of the proof follows exactly that of \cref{thm:main1} on \cpageref{proof:main1}.
\end{proof}

\begin{remark}
    For $n \ge 10$, the graph $E_n'$ is in $\G(\la') \setminus \G(2)$. Since it is not an augmented path extension, \cref{thm:beyond-las-yes} no longer holds for $\la \ge \la'$.
\end{remark}

To show that it is impossible to generalize \cref{thm:main2} beyond $-\las$, we need the following result on the set of smallest graph eigenvalues.

\begin{theorem}[Theorem~2.19 of Jiang and Polyanskii~\cite{JP25}] \label{thm:dense}
    For every $\la > \las$, there exist graphs $G_1, G_2, \dots$ such that $\lim_{n\to\infty} \la_1(G_n) = -\la$. \qed
\end{theorem}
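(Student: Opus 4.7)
The plan is to exploit the matrix criterion from \cref{sec:lin-alg}, which assigns to each rooted graph $F_R$ a limit value $\mu(F_R) := -\lim_{\ell \to \infty}\la_1(F_R, \ell) \ge 2$, characterized via \cref{lem:lim-smallest-ev} as the smallest $x \ge 2$ at which a certain finite matrix $M_{F_R}(x)$ fails to be positive definite. If we can show that $\dset{\mu(F_R)}{F_R \text{ a rooted graph}}$ is dense in $[\las, \infty)$, then the theorem follows at once: given $\la > \las$, choose a rooted graph $F^{(n)}_{R^{(n)}}$ with $\abs{\mu(F^{(n)}_{R^{(n)}}) - \la} < 1/(2n)$, and then pick $\ell_n$ large enough that $\abs{\la_1(F^{(n)}_{R^{(n)}}, \ell_n) + \mu(F^{(n)}_{R^{(n)}})} < 1/(2n)$, yielding graphs $G_n = (F^{(n)}_{R^{(n)}}, \ell_n)$ with $\la_1(G_n) \to -\la$.

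To establish density, I would analyze a flexible multi-parameter family of rooted trees. A concrete candidate is the \emph{T-shape} family $T_{a,b,c}$ of three paths of lengths $a$, $b$, $c$ joined at a central vertex, with the root placed at one endpoint. Using the Chebyshev-like recurrence $d_{n+2} = x d_{n+1} - d_n$ appearing in the proof of \cref{lem:lim-smallest-ev}, one computes $\det M_{T_{a,b,c}}(x)$ as an explicit polynomial in $x$ and $r = x/2 - \sqrt{x^2/4 - 1}$ whose coefficients depend only on $(a, b, c)$. As the triple ranges over $\N^3$, the corresponding roots $\mu(T_{a,b,c})$ produce a countable subset of $[\las, \infty)$ whose density is precisely what must be verified.

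The main obstacle is making the density quantitative, since $\mu(F_R)$ is always algebraic and a purely abstract continuity argument cannot force clustering. My plan is to fix two parameters, say $b$ and $c$, and study $\mu(T_{a,b,c})$ as a function of $a$. Interlacing together with monotonicity of the matrix criterion shows that this function is strictly monotone in $a$ and converges, as $a \to \infty$, to a limit value $\mu(T_{\infty, b, c}) \ge \las$ corresponding to replacing one leg by an infinite path. A perturbation estimate, obtained by differentiating the determinantal equation with respect to $x$ and bounding the resulting ratio, shows that consecutive gaps $\mu(T_{a,b,c}) - \mu(T_{a+1,b,c})$ decay to $0$, giving density in a neighborhood of $\mu(T_{\infty, b, c})$. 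Iterating the same analysis on the remaining parameters $b$ and $c$ shows that the limits $\mu(T_{\infty,b,c})$ are themselves dense in $[\las, \infty)$; a diagonal extraction across all three parameters then finishes the density claim and hence the theorem.
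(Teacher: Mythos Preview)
The paper does not supply a proof of this statement: it is imported verbatim from \cite{JP24} and closed with \qed, so there is no in-paper argument to compare your proposal against. That said, your sketch contains two genuine gaps.

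First, the $T$-shaped family is too small to reach every $\la > \las$. With the root at an endpoint, the path extension of $T_{a,b,c}$ is again a $T$-shaped tree, and since trees are bipartite, $\mu(T_{a,b,c})$ equals the spectral radius of the tree with one semi-infinite leg and two finite legs of lengths $b$ and $c$. Solving the eigenvector condition at the trivalent vertex (equivalently, applying the criterion of \cref{lem:lim-smallest-ev} there) gives $r^2 = 1/2$ in the limit $b,c\to\infty$, hence the uniform upper bound $\mu(T_{a,b,c}) \le 3/\sqrt{2} \approx 2.121$. Thus $\dset{\mu(T_{a,b,c})}{a,b,c \in \N} \subset [\las, 3/\sqrt{2}]$, and no diagonal extraction over $(a,b,c)$ can approximate $-\la$ once $\la > 3/\sqrt{2}$. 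Any argument for the full range must allow rooted graphs of unbounded degree (or some other mechanism producing arbitrarily large $\mu$-values).

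Second, even inside $[\las, 3/\sqrt{2}]$ your density mechanism does not work: a strictly monotone sequence with consecutive gaps tending to $0$ merely accumulates at its limit and is not dense in any open interval (consider $\{1/n\}$). Iterating over the remaining parameters only produces a countable hierarchy of accumulation points culminating in the single value $3/\sqrt{2}$; nothing you have written rules out that all the values $\mu(T_{a,b,c})$ lie in a nowhere-dense set. Establishing density above $\las$ requires a genuinely different idea, typically a two-sided construction that, given a target $\la$, manufactures rooted graphs with $\mu$-values on both sides of $\la$ and arbitrarily close to it, rather than a monotone approach from one side.
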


\begin{proof}[Proof of \cref{thm:beyond-las-no}]
    Fix $\la > \las$. Assume for the sake of contradiction that there exists a finite family $\F$ of rooted graphs and $N \in \N$ such that every graph $G$ on more than $N$ vertices, if $\la_1(G) \in (-\la,-\las)$, then $G$ is isomorphic to an augmented path extension of a rooted graph in $\F$. By \cref{lem:limit}, for each rooted graph $F_R \in \F$, we can set $\la_{F_R} := \lim_{\ell\to\infty}\la_1\ape{F_R,\ell}$. We can then pick an open interval $I \subset (-\la, -\las)$ that avoids the finite set $\dset{\la_{F_R}}{F_R \in \F}$. Note that only finitely many graphs have their smallest eigenvalues in $I$, contradicting \cref{thm:dense}.
\end{proof}

\section{Concluding remarks} \label{sec:remarks}

Since \cref{thm:beyond-las-yes} holds for every $\la \in (\las, \la')$, we ask the following natural question.

\begin{problem}
    Classify all the connected graphs with smallest eigenvalue in $(-\la',-\las)$. In particular, classify such graphs on sufficiently many vertices.
\end{problem}

To conclude the paper, we reiterate a similar problem raised in \cite{JP25} on \emph{signed graphs}, which are graphs whose edges are each labeled by $+$ or $-$. When we talk about eigenvalues of a signed graph $G^\pm$ on $n$ vertices, we refer to its \emph{signed adjacency matrix} --- the $n \times n$ matrix whose $(i,j)$-th entry is $1$ if $ij$ is a positive edge, $-1$ if $ij$ is a negative edge, and $0$ otherwise.

\begin{problem}
    Classify all the connected signed graphs with smallest eigenvalue in $(-\las,-2)$. In particular, classify such signed graphs on sufficiently many vertices.
\end{problem}

Understanding such signed graphs and extending their classification beyond $-\las$ would offer insights into spherical two-distance sets. We refer the reader to \cite[Section~5]{JP25} and \cite{JW25} for the relevant discussion.

\section*{Acknowledgements} We would like to thank the referees for their careful reading of the manuscript and for their constructive comments, which have helped improve the quality of the paper.

\bibliographystyle{plain}
\bibliography{lambda}

\appendix

\section{Computer-assisted proofs} \label{sec:app}

\begin{cproof}[Proof of \cref{lem:seven-eight-vertex,lem:computation-pro}]
    For the exceptional case in \cref{lem:computation-pro} where $F_R$ is the rooted graph in \cref{fig:rooted-e2-prime}, the path extension $(F_R, \ell)$ is just $E_{\ell+7}'$, whose smallest eigenvalue approaches $-\la'$ as $\ell \to \infty$ according to \cref{lem:lambda-prime}. We strengthen the other inequalities in \cref{lem:computation-pro} by replacing $\la'$ with $95/47$.
    
    In view of \cref{lem:lim-smallest-ev}, for each rooted graph $F_R$ considered in \cref{lem:seven-eight-vertex,lem:computation-pro}, to show $\lim_{\ell \to \infty} \la_1(F_R, \ell) < -95/47$, we only need to show that
    \[
        A_{(F_R, 0)} + (95/47)I - (95/94-3\sqrt{21}/94)E_{v_0,v_0}
    \] is not positive semidefinite, where $v_0$ is the vertex in $V(F_R,0)\setminus V(F)$. Since $95/94-3\sqrt{21}/94 > 6/7$, to show that the above matrix is not positive semidefinite, it suffices to show the matrix
    \begin{equation} \label{eqn:rational-matrix}
        A_{(F_R, 0)} + (95/47)I - (6/7)E_{v_0,v_0}
    \end{equation}
    with rational entries is not positive semidefinite.
    
    Our implementation is straightforward. We iterate through the $7$-vertex graphs labeled by \texttt{A1,...,A39} and the $8$-vertex graphs labeled by \texttt{B1,...,B4} in \cref{lem:seven-eight-vertex}, and the minimal forbidden subgraphs, labeled by \texttt{G1,...,G31}, for the family $\D_\infty$ in \cref{lem:computation-pro}. For each graph $F$, we check whether $A_F + (95/47)I$, a principal submatrix of the matrix in \eqref{eqn:rational-matrix}, is positive semidefinite. Since $95/47$ is not an algebraic integer, it cannot be a graph eigenvalue. We instead check whether $A_F + (95/47)I$ is positive definite. If so, we output the nonempty vertex subsets $R$ of $F$, for which the determinant of the matrix in \eqref{eqn:rational-matrix} is nonnegative.
    
    In the output, either $F$ is isomorphic to $E_7$ or $E_6$, or $F_R$ is the exceptional rooted graph in \cref{fig:rooted-e2-prime}. Therefore the output of our program serves as the proof of the strict inequalities in \cref{lem:seven-eight-vertex,lem:computation-pro}.
    
    Our code is available as the ancillary file \texttt{path\_extension.rb} in the arXiv version of this paper. We provide the input as \texttt{path\_extension.txt} for the convenience of anyone who wants to program independently. In the input, each line contains the label of the graph and the string of the form \texttt{u[1]u[2]...u[2e-1]u[2e]}, which lists the edges \texttt{u[1]u[2],...,u[2e-1]u[2e]} of the graph. The first line represents the graph $E_6$, the next 4 lines represent $B_1, \dots, B_4$ in \cref{fig:e-graphs}, and the remaining lines represent the 31 minimal forbidden subgraphs for $\D_\infty$. 
\end{cproof}

\end{document}